\newtheorem{thm}{Theorem}[section]
\newtheorem{lemma}[thm]{Lemma}
\newtheorem{prop}[thm]{Proposition}
\newtheoremstyle{rem}{10pt}{10pt}{\rmfamily}{}{\bfseries}{.}{.5em}{}
\theoremstyle{rem}
\newtheorem{rem}[thm]{Remark}
\numberwithin{equation}{section} 
\title{Stochastic Schr\"odinger-Korteweg
	de Vries systems driven by multiplicative noises}
\author[1]{Jie Chen}
\author[2]{Fan Gu\thanks{Corresponding Author: gufan@amss.ac.cn}}
\author[3]{Boling Guo}
\affil[1]{\scriptsize \textit{School of Sciences, Jimei University, Xiamen 361021, P.R. China}}
\affil[2]{\scriptsize \textit{School of Statistics and Mathematics, Central University of Finance and Economics,  Beijing 102206, P.R. China}}
\affil[3]{\scriptsize \textit{Institute of Applied Physics and Computational Mathematics, Beijing 100088, P.R. China}}
\date{}
\begin{document}
	\maketitle

\begin{abstract}
In this paper, we consider the well-posedness of stochastic S-KdV driven by multiplicative noises in $H_x^1\times H_x^1$. To get the local well-posedness, we first develop the bilinear and trilinear Bourgain norm estimates of the nonlinear terms with $b\in\left(0,1/2\right)$. Then, to overcome regularity problems, we introduce a series of approximation equations with localized nonlinear terms, which are also cutted-off in both the physical and the frequency space. By limitations, these approximation equations will help us get a priori estimate in the Bourgain space and finish the proof of the global well-posedness of the initial system. 
\end{abstract}

\section{Introduction}\label{sec:introduction}

In this paper, we study the local and global well-posedness of stochastic Schr\"{o}dinger-Korteweg
de Vries systems driven by  multiplicative noises in $H_x^1\times H_x^1$.
We consider the following \eqref{msskdv} in $t\in[0,T_0]$:
\begin{equation}\label{msskdv}
	\tag{S-S-KdV}
	\left\{
	\begin{aligned}
		&d u=i\partial_{xx}udt -i (\gamma_1 uw+\beta|u|^2u)dt+F(u)^{\alpha} \Phi dW_t^{(1)},\\
		&d w=-\partial_{xxx}wdt +(\gamma_2\partial_x(|u|^2)-w\partial_x w)dt+w^{\alpha}\Psi dW_t^{(2)};\\
		&(u,w)|_{t = 0} = (u_0,w_0)\in H^1(\mathbb{R})\times H^1(\mathbb{R}),
	\end{aligned}
	\right.
\end{equation}
where $\alpha, \beta, \gamma_1, \gamma_2$ are real-valued constants, $\alpha\in \mathbb{N^+}$ and $\gamma_1\cdot\gamma_2>0$. $u,v$ are  complex-valued and  real-valued, respectively. $F(u)$ can be $u,\ \bar{u},\ \text{Im}u  $ or $\text{Re} u$.  

\eqref{msskdv} is defined in a filtrated probability space $\left(\Omega, \mathscr{F},\mathbb{P},\left(\mathscr{F}_t\right)_{t\in\left[0,T_0\right]} \right)$. $\frac{dW_t^{(1)}}{dt}, \frac{dW_t^{(2)}}{dt}$ are two space-time independent white noises on $L^2\left( \mathbb{R}\right)$ adapted to $\left\{\mathscr{F}_t\right\}_{t\in\left[0,T_0\right]}$. $W^{(r)}_t, r=1,2$ can be represented as $\sum_{k=0}^{+\infty}\beta_k^{(r)}(t)e_k^{(r)}$, where $\{\beta_k^{(r)}\}$ are two sequences of mutually independent real standard Brownian motions and $\{e_k^{(r)}\}$ are two orthonormal basis of real-valued $L^2(\mathbb{R})$. $\Phi,\Psi$ are two homogeneous convolution operators on $L^2(\mathbb{R})$ defined by
$$
\Phi f(x)=\int_{\mathbb{R}} k_1(x-y)f(y)dy,\  \Psi f(x) = \int_{\mathbb{R}} k_2(x-y)f(y)dy,\  \forall f\in L^2(\mathbb{R}).
$$

The deterministic S-KdV is devoted to describe the interactions between short waves $u(x,t)$ and long waves $v(x,t)$ in  fluid mechanics and plasma physics. 
The case $\beta=0$ appears in the study of resonant
interaction between short and long capillary-gravity waves on water of a uniform finite depth, in plasma physics and in a diatomic lattice system. For more background details, one can see \cite{77dynamics}, \cite{77theory} and \cite{corcho2007well}. 

The well-posedness of the deterministic coupled system has been widely researched. In \cite{Guo}, \cite{H1H1}, \cite{corcho2007well} and \cite{guo2010well}, they have proved the well-posedness in $C([0,T];H^s(\mathbb{R})\times H^s(\mathbb{R})), \ s \geq 3$; $C([0,T];H^s(\mathbb{R})\times H^s(\mathbb{R}))$, $s\in\mathbb{N^+}, \beta=0$; $C([0,T];L^2(\mathbb{R})\times H^{(-3/4)+}(\mathbb{R}))$, $s\geq\frac{1}{2}$ and $C([0,T];L^2(\mathbb{R})\times H^{-3/4}(\mathbb{R}))$, respectively.

For \eqref{msskdv}, the multiplicative noises can be interpreted as external random potentials or stochastic dissipative terms, see \cite{de1999stochastic} \cite{schrodingerH1} \cite{14zhang}  \cite{kdvmuti} and \cite{de09} for examples.
Since there is a nonlinear term with derivative in S-KdV, the well-posedness of the stochastic S-KdV is more like the well-posedness of stochastic KdV, such as \cite{KdVH1} \cite{de1999white} \cite{kdvmuti}. 
Roughly speaking, the proof of the well-posedness of stochastic S-KdV can be divided into to two steeps. 

The first steep is the local well-posedness, including the choice of work-space, estimates of nonlinear terms and estimates of stochastic integral terms. 
In \cite{chenguguo}, the additive noises case, we use a workspace with the maximum functional estimate. In  multiplicative cases, if we still use that workspace, we need to estimate the  $\|\cdot\|_{L^2_\omega L_x^2L_t^\infty}$-norm of the stochastic integral terms. For feasibility, we may estimate $\|\cdot\|_{L^2_\omega L_x^2W_t^{\alpha,p}},\ \alpha p>1,\ p>2$. In general, to estimate $\|\cdot\|_{L^2_\omega L_x^2W_t^{\alpha,p}} $, we need to estimate $\|\cdot\|_{ L_x^2W_t^{\alpha,p}L^p_\omega}$. But this norm is hard to be controlled by $\|\cdot\|_{L^2_\omega L_x^2W_t^{\alpha,p}} $ or $ \|\cdot\|_{L^2_\omega L_t^\infty H_x^1}$. Thus, we use the Bourgain norms with the regularity of time lower than $1/2$ to overcome this problem.  To deal with coupling terms by Bourgain method, we develop some mixing bilinear and multilinear estimates of Schr\"odinger and KdV equations. Furthermore, in the multiplicative case, because there are $u,v$ in the noise terms, we must use the workspace with expectation. This fact requires us to localize nonlinear terms in time. Then, a fixed point argument can be done.

The second step is to prove the global well-posedness. However, when we use conservation laws, there are problems in the regularity. To overcome these problems, 
we propose a series of approximation equation with nonlinear terms cut-off in both the physical space and the frequency space.  These approximation equations are effective because they are global well-posedness in a high regularity space until any constant time. At the same time, we can also get an $H_x^1\times H_x^1$ priori estimate of the approximation equations until any constant time, since they have enough conservation laws(Proposition \ref{prop:uniform_esti}).  Furthermore, through a path-wise view, this estimate will transfer to a priori estimate of Bourgain norms. By limitation, these estimates will help us finish the proof of the global well-posedness.

This paper is organized as the following manner: First, we introduce some definitions and notations. Our main results are also proposed in this section. Second, we prove some  bilinear estimates and multilinear estimates in the Bourgain space with $b<1/2$. Some estimates of stochastic integrals are also given in this section too. Next, we use above estimates to prove the local well-posedness by a fixed point argument. Finally, to prove the global well-posedness, we introduce a sequences of approximation equations, which helps us finish the proof of the main results, by considering their priori estimates and convergences.

\section{Preliminaries and the Main Result}\label{sec:preliminary}
In this section, we introduce some notations, definitions and basic facts.

For any $ \varphi(x,t) \in \mathscr{S}'(\mathbb{R}\times\mathbb{R})$, we use $\hat{\varphi}(\xi,\tau)$ or $ \mathcal{F}_{x,t}[\varphi](\xi,\tau)$ to represent the space-time Fourier transform of $\varphi$. We use $ \mathcal{F}_x[\varphi](\xi,t)$ to denote the space Fourier transform of $\varphi$. The inverse Fourier transforms are  denoted by $\check{\varphi}(x,t)$, $ \mathcal{F}^{-1}_{\xi,\tau}[\varphi](x,t)$ and $F^{-1}_{\xi}[\varphi](x) $ similarly.

For any $f\in L^2(\mathbb{R})$, $n\in\mathbb{N}^+$, we denote $J^n f(x) =F^{-1}_{\xi}\left[(1+|\xi|)^n\hat{f}(\xi)\right](x)$.

$H^s(\mathbb{R}),\ \dot{H}^{s}(\mathbb{R}),\ s\in\mathbb{R} $ are the usual Sobolev space and the homogeneous Sobolev sapce, respectively. We also use $\mathcal{H}^1(\mathbb{R}) $ to represent $H^1(\mathbb{R})\times H^1(\mathbb{R})$.

For any $f\in L^2(\mathbb{R})$, $m>0$, we denote the low frequency cut-off operator and the high frequency cut-off operator  by 
$$
P_mf(x)=\mathcal{F}^{-1}_x[ \chi_{[-m,m]}(\xi)\mathcal{F}_x[f]](x),\ P_{\geq m } f(x)=\mathcal{F}^{-1}_x[ \chi_{[-m,m]^c}(\xi)\mathcal{F}_x[f]](x),
$$
respectively.

For any $s>0,\ f\in H^s(\mathbb{R}) $, by Lemma 2.6 in \cite{kdvmuti}, we have conditions 
$$ \sum_{k=0}^{\infty} \| f\Phi e_k\|_{H_x^s}^2 <\infty,\ \sum_{k=0}^{\infty} \| f\Psi e_k\|_{H_x^s}^2 <\infty $$
can be replaced by  condition $\|k_1\|_{H_x^s}<\infty$ and $\|k_2\|_{H_x^s}<\infty$, respectively. 

Moreover, it can be proved that conditions
$$ \sum_{k=0}^{\infty} \| fP_m\Phi e_k\|_{H_x^s}^2 <\infty,\ \sum_{k=0}^{\infty} \| P_m(f\Psi e_k)\|_{H_x^s}^2 <\infty $$
can also be replaced by conditions $\|k_1\|_{H_x^s}<\infty$ and $\|k_2\|_{H_x^s}<\infty$.

For $f,g\in \mathscr{S}'(\mathbb{R}^2)$, we denote the Bourgain norms by
$$
\|f\|_{X_{b,s}}=\left(\int_{\mathbb{R}^2}(1+|\xi|)^{2s} \left(1+|\tau+\xi^2|\right)^{2b} |\hat{f} (\xi,\tau)|^2d\xi d\tau\right)^{1/2},
$$
$$
\|g\|_{Y_{b,s}}=\left(\int_{\mathbb{R}^2}(1+|\xi|)^{2s} \left(1+|\tau-\xi^3|\right)^{2b}|\hat{g} (\xi,\tau)|^2d\xi d\tau\right)^{1/2},
$$
$$
\|g\|_{Y_{b,s,-\frac{3}{8}}}=\left(\int_{\mathbb{R}^2}|\xi|^{-\frac{3}{4}}(1+|\xi|)^{2s} (1+|\tau-\xi^3|)^{2b}|\hat{g} (\xi,\tau)|^2d\xi d\tau\right)^{1/2}.
$$
The restricted norms are denoted  by
$$
\|f\|_{X^T_{b,s}}=\inf\{\|\tilde{f}\|_{X_{b,s}}\}, \ \|g\|_{Y^T_{b,s}}=\inf\{\|\tilde{g}\|_{Y_{b,s}}\}, \
\|g\|_{Y^T_{b,s,-3/8}}=\inf\{\|\tilde{g}\|_{Y_{b,s,-3/8}}\},
$$
for any $\tilde{f}=f, \tilde{g}=g $ in $[0,T]$.
For the sake of simplicity, we also denote 
$$
\tilde{Y}_{b,s}= Y_{b,s}\cap Y_{b,s,-3/8},\   \tilde{Y}^T_{b,s}= Y^T_{b,s}\cap Y^T_{b,s,-3/8}
$$
and
$$
\left<C\right>=1+|C|.
$$

From the Lemma 2.1 in \cite{kdvmuti}, we know that there exist constants $C_1$ and $C_2$ depending only on $b$ such that
$$
C_1\|f\|_{X_{b,s}^T}\leq \|\chi_{[0,T]}f\|_{X_{b,s}} \leq C_2\|f\|_{X_{b,s}^T},\ C_1\|g\|_{Y_{b,s}^T}\leq \|\chi_{[0,T]}g\|_{Y_{b,s}} \leq C_2\|g\|_{Y_{b,s}^T},
$$ 
for any $b\in[0,1/2)$, $T,s\geq0$, $f\in X^T_{b,s}$ and $g\in Y^T_{b,s} $.

The mild solution of \eqref{msskdv} is
\begin{equation}\label{mild}
	\left\{
	\begin{aligned}
		&u(t)=S(t)u_0-i\int_{0}^{t} S(t-r)\left(\gamma_1uw+\beta|u|^2u\right)dr+\int_{0}^{t} S(t-r)F(u)^{\alpha}\Phi dW_r^{(1)}\\
		&w(t)=U(t)w_0+\int_{0}^{t} U(t-r)\partial_x\left(\gamma_2|u|^2-\frac{1}{2}w^2\right)dr+\int_{0}^{t}U(t-r) w^{\alpha}\Psi dW_r^{(2)},
	\end{aligned}
	\right.
\end{equation}
where $S(t)$ and $U(t)$ are unity semi-groups of the deterministic linear Schr\"{o}dinger equation and linear KdV equation, respectively. 

However,  since the nonlinear part of $w$ actually has a better regularity in $C\left([0,T_0];\dot{H}^{-3/8}_x\right)$, we also consider the following equation:
\begin{equation}\label{mild_without_linear}
	\left\{
	\begin{aligned}
		&u(t)=S(t)u_0-i\int_{0}^{t} S(t-r)\left(\gamma_1u\left(v+U(r)w_0\right)+\beta|u|^2u\right)dr\\
		&\qquad\qquad+\int_{0}^{t} S(t-r)F(u)^{\alpha}\Phi dW_r^{(1)},\\
		&v(t)=\int_{0}^{t} U(t-r)\partial_x\left(\gamma_2|u|^2-\frac{1}{2}\left(v+U(r)w_0\right)^2\right)dr\\
		&\qquad\qquad+\int_{0}^{t}U(t-r) \left(v+U(r)w_0\right)^{\alpha}\Psi dW_r^{(2)};\\
		&(u,v)|_{t = 0} = (u_0,0).
	\end{aligned}
	\right.
\end{equation}

Let cut-off function $\theta\in C_c^\infty(\mathbb{R})$ satisfy $\text{supp}\ \theta\subset[-2,2]$, $\theta(t)=1,\forall t\in[-1,1]$ and $|\theta(t)|\leq 1,\forall t\in \mathbb{R}$. We also denote $\theta_R(t)=\theta(\frac{t}{R})$ and $ \tilde{u}_R(x,t)=\theta_R(\|u_R\|_{X^t_{b,1}})u_R(x,t) , \ \tilde{v}_R(x,t)=\theta_R(\|v_R\|_{\tilde{Y}^t_{b,1}})v_R(x,t)$, for proper $b\in(0,1/2)$, which will be figured in following sections.

To prove the local well-posedness of $\eqref{msskdv}$, we need the following localized equation:
\begin{equation}\label{truncated_eq}
	\left\{
\begin{aligned}
	&u_R(t)=S(t)u_0-i\int_{0}^{t} S(t-r)\left(\gamma_1\tilde{u}_R(\tilde{v}_R+U(r)w_0)+\beta|\tilde{u}_R|^2\tilde{u}_R\right)dr\\
	&\qquad \quad +\int_{0}^{t} S(t-r)F(\tilde{u}_R)^{\alpha}\Phi dW_r^{(1)},\\
	&v_R(t)=\int_{0}^{t} U(t-r)\partial_x\left(\gamma_2|\tilde{u}_R|^2-\frac{1}{2}\left(\tilde{v}_R+U(r)w_0\right)^2\right)dr\\
	&\qquad \quad+\int_{0}^{t}U(t-r)(v_R+U(r)w_0)^{\alpha}\Psi dW_r^{(2)}.
\end{aligned}
\right.
\end{equation}
For any $R>0$, we define stopping times
$$
\sigma_R^{(1)}=\inf\left\{t\geq 0: \|u_R\|_{X^t_{b,1}}\geq R\right\},\ \sigma_R^{(2)}=\inf\left\{t\geq 0: \|v_R\|_{\tilde{Y}^t_{b,1}}\geq R \right\}.
$$
and $w_R(t)=U(t)w_0+v_R(t)$. 

Now, we can propose our main theorems of this paper. The first theorem is  the local well-posedness and the the second theorem is  the global well-posedness.

\begin{thm}\label{thm:local_well-posedness}
	Suppose that $(u_0,w_0)\in H_x^1\times  H_x^1$, $k_1\in H_x^1$, $k_2\in H_x^1\cap L_x^1$. Let us define $b_\alpha=\left(1/2-1/4(\alpha-1)\right)\vee3/8$. Then for any $R,T_0>0$, $\alpha\in \mathbb{N}^+$, $b\in(b_\alpha,1/2)$, $l\in\mathbb{N^+}$, there exists a unique strong  solution $(u_R,v_R)\in L^{2l}\left(\Omega;X^{T_0}_{b,1}\times \tilde{Y}^{T_0}_{b,1}\right)$ of \eqref{truncated_eq}. Moreover, we have $(u_R,v_R)\in L^{2l}\left(\Omega;C\left([0,T_0];H_x^1\times H_x^1\right)\right)$.
\end{thm}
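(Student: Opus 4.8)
The plan is to solve the mild system \eqref{truncated_eq} by a Picard fixed point in the Banach space $\mathcal{E}_\delta:=L^{2l}(\Omega;X^{\delta}_{b,1}\times\tilde Y^{\delta}_{b,1})$, restricted to the closed subspace of processes adapted to $(\mathscr{F}_t)_{t\in[0,T_0]}$, on a short interval $[0,\delta]$ with $\delta=\delta(R,l,\alpha,\beta,\gamma_1,\gamma_2,\|k_1\|_{H^1_x},\|k_2\|_{H^1_x\cap L^1_x})>0$, and then to patch finitely many such local solutions to reach $T_0$. Let $\mathcal{T}=(\mathcal{T}_1,\mathcal{T}_2)$ denote the map sending $(u,v)$ to the right-hand side of \eqref{truncated_eq}. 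The first task is to show that $\mathcal{T}$ sends a ball of $\mathcal{E}_\delta$ into itself, term by term. (i) For the free evolutions, $\|S(t)u_0\|_{X^{\delta}_{b,1}}\lesssim\|u_0\|_{H^1_x}$ and $\|U(t)w_0\|_{Y^{\delta}_{b,1}}\lesssim\|w_0\|_{H^1_x}$; note that $U(t)w_0$ occurs only \emph{inside} the nonlinearities, so the singular weight $|\xi|^{-3/4}$ of $\tilde Y_{b,1}$ is never applied to it, which is exactly why the linear KdV part was split off in passing to \eqref{mild_without_linear} and \eqref{truncated_eq}. (ii) For the deterministic Duhamel integrals one uses the bilinear Schr\"odinger--KdV estimate on $\tilde u_R(\tilde v_R+U(r)w_0)$, the trilinear estimate on $|\tilde u_R|^2\tilde u_R$, and the KdV bilinear estimate on $\partial_x(\gamma_2|\tilde u_R|^2-\tfrac12(\tilde v_R+U(r)w_0)^2)$ from the previous section — the derivative being absorbed by the gain of the $-3/4$-weighted $\tilde Y$ norm — together with the elementary bound that the running cut-offs force $\|\tilde u_R\|_{X^{\delta}_{b,1}}\lesssim R$ and $\|\tilde v_R\|_{\tilde Y^{\delta}_{b,1}}\lesssim R$ pointwise in $\omega$. (iii) For the stochastic integrals one invokes the stochastic-integral estimates of the previous section in $L^{2l}_\Omega$; these rest on the fact that the time regularity of the stochastic convolution sits below $1/2$ and on controlling the $\alpha$-th power $F(\tilde u_R)^{\alpha}$ (resp.\ $(v_R+U(r)w_0)^{\alpha}$) in a low-$b$ Bourgain norm, which, combined with the $3/8$-threshold of the weighted KdV bilinear estimate, yields the lower bound $b_\alpha$, and they use $k_1\in H^1_x$, $k_2\in H^1_x\cap L^1_x$ (the $L^1_x$ being needed for the $|\xi|^{-3/4}$-weighted component of $\tilde Y_{b,1}$). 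Because $b<1/2$, the estimates in (ii) and (iii) come with a positive power $\delta^{\varepsilon}$, which is the source of the smallness needed below.

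For the contraction one subtracts two outputs and estimates the differences with the same three families of inequalities, now also using that $u\mapsto\theta_R(\|u\|_{X^t_{b,1}})u$ and $v\mapsto\theta_R(\|v\|_{\tilde Y^t_{b,1}})v$ are globally Lipschitz on $X^{\delta}_{b,1}$, resp.\ $\tilde Y^{\delta}_{b,1}$, with a constant depending only on $R$ and $\theta$; together with the $\delta^{\varepsilon}$ gains this gives $\|\mathcal{T}(u,v)-\mathcal{T}(u',v')\|_{\mathcal{E}_\delta}\le\tfrac12\|(u,v)-(u',v')\|_{\mathcal{E}_\delta}$ once $\delta$ is small, the choice of $\delta$ depending only on $R,l$ and the fixed data, not on the random size of the solution. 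Adaptedness is preserved (It\^o integrals of adapted integrands are adapted, the free and Duhamel operators preserve adaptedness, and the running-norm cut-offs are $\mathscr{F}_t$-measurable), so Banach's theorem yields a unique fixed point $(u_R,v_R)$ in that ball; a standard difference argument, again using the global Lipschitz character of the truncated nonlinearities, upgrades uniqueness to all of $L^{2l}(\Omega;X^{\delta}_{b,1}\times\tilde Y^{\delta}_{b,1})$. Since $\delta$ is independent of the data except through the listed constants, one repeats the construction on $[\delta,2\delta],[2\delta,3\delta],\dots$ — the datum at each step being the a.s.\ $H^1_x$-valued, $L^{2l}_\Omega$-integrable terminal value of the previous step, and the running cut-offs on later subintervals only shrinking the nonlinearity — and glues the pieces, using the equivalence of restricted and global Bourgain norms recalled from \cite{kdvmuti}, into a solution on $[0,T_0]$ in $L^{2l}(\Omega;X^{T_0}_{b,1}\times\tilde Y^{T_0}_{b,1})$.

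For the final assertion $(u_R,v_R)\in L^{2l}(\Omega;C([0,T_0];H^1_x\times H^1_x))$ one cannot use an embedding of $X_{b,1}$ since $b<1/2$; instead one reads continuity off \eqref{truncated_eq} term by term. The free terms $S(t)u_0$, $U(t)w_0$ lie in $C([0,T_0];H^1_x)$ because $S$, $U$ are $C_0$-groups there. The deterministic Duhamel integrals are continuous because the bilinear and trilinear estimates feed the inhomogeneous Schr\"odinger and KdV propagators a forcing (in $X^{T_0}_{-b',1}$, resp.\ $\tilde Y^{T_0}_{-b',1}$, for a suitable $b'\in(0,1/2)$) on which those propagators are bounded into $C([0,T_0];H^1_x)$ — here the derivative-smoothing built into the Bourgain estimates is essential, since the raw integrand $\partial_x(\cdots)$ is only in $L^1_tL^2_x$. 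For the stochastic convolutions, the pointwise-in-$\omega$ bound $\|\tilde u_R\|_{X^{T_0}_{b,1}}\lesssim R$ together with $X_{b,1}\hookrightarrow L^2_tH^1_x$ and the algebra property of $H^1(\mathbb{R})$ put $F(\tilde u_R)^{\alpha}\Phi$ and $(v_R+U(r)w_0)^{\alpha}\Psi$ in $L^p([0,T_0];\mathrm{HS}(L^2_x,H^1_x))$ for a suitable $p>2$ (using also the $L^{2l}_\Omega$-bound just obtained), so the factorization method gives the stochastic convolutions in $L^{2l}(\Omega;C([0,T_0];H^1_x))$; $k_1\in H^1_x$ and $k_2\in H^1_x\cap L^1_x$ enter through the Hilbert--Schmidt bounds recalled after the definition of $\Phi,\Psi$. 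Adding the three contributions gives the claim.

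The chief difficulty is that the whole scheme must be run at $b<1/2$, which is dictated by the time regularity of the multiplicative noise and has two consequences: the coupling (bilinear, trilinear) and the $\alpha$-th-power stochastic estimates become tight, and it is precisely the requirement that they close in a low-$b$ Bourgain space — alongside the $3/8$-floor of the weighted KdV estimate — that produces the threshold $b_\alpha$; and one loses the embedding $X_{b,1}\hookrightarrow C_tH^1_x$, so the $C([0,T_0];H^1_x)$ regularity has to be argued separately from the integral equation as above. A secondary technical point is the bookkeeping for the running-norm cut-offs $\theta_R(\|\cdot\|_{X^t_{b,1}})$ and $\theta_R(\|\cdot\|_{\tilde Y^t_{b,1}})$: one must check that they are $\mathscr{F}_t$-adapted, bounded, and Lipschitz on the restricted Bourgain spaces, and that restarting the iteration on later subintervals does not spoil these properties. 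With the multilinear and stochastic estimates of the preceding sections in hand, the remaining work is routine.
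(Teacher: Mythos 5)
Your overall scheme is the same as the paper's: a contraction in $L^{2l}\bigl(\Omega;X^{T}_{b,1}\times\tilde Y^{T}_{b,1}\bigr)$ on an interval of length depending only on $R$ and the fixed data (using Lemmas \ref{lem:KS_to_S}--\ref{lem:strichartz_to_bourgain_2}, the Lipschitz bounds of Lemma \ref{lem:truncated_equation} and \eqref{restricted_norm_geq_1/2}), iteration up to $T_0$, and then a separate argument for $C([0,T_0];H^1_x)$ regularity because $b<1/2$ kills the embedding $X_{b,1}\hookrightarrow C_tH^1_x$; your use of the factorization method for the stochastic convolutions, where the paper estimates $L^{2l}_\omega L^\infty_T H^1_x$ directly by BDG and Lemma 2.6 of \cite{kdvmuti}, is only a cosmetic variation.

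There is, however, one concrete under-justified step. You propose to bound the KdV Duhamel term by applying ``the KdV bilinear estimate on $\partial_x\bigl(\gamma_2|\tilde u_R|^2-\tfrac12(\tilde v_R+U(r)w_0)^2\bigr)$ from the previous section.'' Section \ref{sec:ineq} contains no KdV--KdV bilinear estimate: Lemma \ref{lem:SS_to_K} treats only $\partial_x(g\bar h)$ with both factors Schr\"odinger-type; the KdV--KdV estimate is imported from \cite{kdvmuti}. More importantly, expanding $(\tilde v_R+U(r)w_0)^2$ produces the cross term $\tilde v_R\,U(r)w_0$ and the square $(U(r)w_0)^2$, and for generic $w_0\in H^1_x$ the free flow $U(r)w_0$ does \emph{not} belong to $\tilde Y^{T}_{b,1}$ (its $Y_{b,1,-3/8}$-norm requires $w_0\in\dot H^{-3/8}_x$), only to $Y^{T}_{b',1}$. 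So any bilinear estimate whose hypotheses put both factors in the weighted space cannot close these terms, even though you correctly observe that the weight is ``never applied to $U(t)w_0$'' as an unknown; it still appears as a factor inside a nonlinearity whose output must be measured in $\tilde Y_{-a,1}$. The paper closes this with the mixed inequality
$\|\partial_x(fg)\|_{\tilde Y^{T}_{-a,1}}\leq C\min\{\|f\|_{Y^T_{b',1}},\|f\|_{\tilde Y^T_{b,1}}\}\cdot\min\{\|g\|_{Y^T_{b',1}},\|g\|_{\tilde Y^T_{b,1}}\}$
from \cite{de1999white}, which allows one factor (or both) to be taken in the unweighted norm; you need this (or an equivalent statement) both in the contraction estimates and in the $C_tH^1_x\cap\dot H^{-3/8}_x$ continuity step, and without it the bound on $\partial_x\bigl(\tilde v_R\,U(r)w_0\bigr)$ and $\partial_x\bigl((U(r)w_0)^2\bigr)$ is not justified. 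The rest of your outline (source of $b_\alpha$, adaptedness, gluing with the cut-offs' memory, factorization for pathwise continuity) is consistent with the paper's proof.
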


\begin{thm}\label{thm:main}
	Suppose that $(u_0, w_0)\in H_x^1\times H_x^1$, $k_1 \in  H^1_x$, $ k_2 \in L^1_x\cap H^1_x$. Then for any $l\in\mathbb{N^+}$, $T_0>0$, \eqref{msskdv} has a unique strong solution in $L^{2l}\left(\Omega;C\left([0,T_0];H_x^1\times H_x^1\right)\right)$ with $\alpha=1$.
\end{thm}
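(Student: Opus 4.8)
The plan is to pass from the localized system \eqref{truncated_eq} to the original system \eqref{msskdv} by removing the cut-offs on a suitable stopping-time interval, and then to propagate the local solution to the whole interval $[0,T_0]$ using uniform $\mathcal H^1$ bounds coming from the conservation laws of a family of approximation equations. First I would use Theorem \ref{thm:local_well-posedness} with $\alpha=1$ to obtain, for each $R>0$, the strong solution $(u_R,v_R)$ of \eqref{truncated_eq} in $L^{2l}(\Omega;X^{T_0}_{b,1}\times\tilde Y^{T_0}_{b,1})\cap L^{2l}(\Omega;C([0,T_0];\mathcal H^1))$. On the stochastic interval $[0,\sigma_R^{(1)}\wedge\sigma_R^{(2)})$ the cut-off factors $\theta_R(\|u_R\|_{X^t_{b,1}})$ and $\theta_R(\|v_R\|_{\tilde Y^t_{b,1}})$ equal $1$, so $(u_R,v_R)$ solves the genuine system \eqref{mild_without_linear}; by uniqueness the solutions are consistent for different $R$, and setting $w_R=U(t)w_0+v_R$ recovers a local solution of \eqref{mild}. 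It then remains to show that $\sigma_R:=\sigma_R^{(1)}\wedge\sigma_R^{(2)}\to T_0$ (indeed $\uparrow\infty$) almost surely as $R\to\infty$, equivalently that $\|u_R\|_{X^{T_0}_{b,1}}+\|v_R\|_{\tilde Y^{T_0}_{b,1}}$ does not blow up. Uniqueness in $L^{2l}(\Omega;C([0,T_0];\mathcal H^1))$ follows from the uniqueness already built into Theorem \ref{thm:local_well-posedness} together with a localization/patching argument.

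The core of the argument is the a priori estimate. Here I would invoke the family of approximation equations described in the introduction — nonlinearities cut off in both physical and frequency space — which are globally well-posed in a high-regularity space on any time interval, and for which Proposition \ref{prop:uniform_esti} gives a uniform $\mathcal H^1\times\mathcal H^1$ bound up to time $T_0$ by exploiting their conservation laws (mass of $u$, the KdV-type $L^2$ conservation of $w$, and the coupled Hamiltonian, controlled against the multiplicative stochastic forcing via It\^o's formula and a Gronwall/BDG argument in expectation). Passing to the limit in the regularization parameter transfers this to an $\mathcal H^1$ a priori bound for the actual solution of \eqref{msskdv}; this is where the hypotheses $k_1\in H^1_x$, $k_2\in L^1_x\cap H^1_x$ and $\gamma_1\gamma_2>0$ are used, the sign condition guaranteeing that the conserved energy controls the $H^1$ norms. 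Then, pathwise, I would feed the $C([0,T_0];\mathcal H^1)$ bound back into the linear estimates for $S(t)$, $U(t)$ together with the bilinear/trilinear Bourgain estimates with $b<1/2$ and the stochastic-integral estimates from Section 2, subdividing $[0,T_0]$ into finitely many short subintervals of random but positive length depending only on the (now controlled) $\mathcal H^1$ norm, to bound $\|u_R\|_{X^{T_0}_{b,1}}+\|v_R\|_{\tilde Y^{T_0}_{b,1}}$ by a finite random constant independent of $R$. This forces $\sigma_R\uparrow\infty$ a.s., and the $L^{2l}(\Omega)$ integrability is recovered from the $L^{2l}$ bounds of Theorem \ref{thm:local_well-posedness} and the uniform estimates.

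The restriction $\alpha=1$ is essential at exactly this step: for $\alpha\ge 2$ the noise coefficients $F(u)^\alpha\Phi$ and $w^\alpha\Psi$ are superlinear, the It\^o correction terms in the evolution of the conserved quantities are no longer controlled by the energy, and the global a priori estimate breaks down; with $\alpha=1$ the forcing is linear in $(u,w)$ and a Gronwall argument closes. The main obstacle I anticipate is not the algebra of the conservation laws but the regularity bookkeeping in the limiting procedure: one must check that the double (physical + frequency) cut-off in the approximation equations can be removed while keeping the $\mathcal H^1$ bounds uniform and while preserving adaptedness and the martingale structure of the stochastic integrals, and then that the convergence is strong enough (e.g. in $L^{2l}(\Omega;C([0,T_0];\mathcal H^1))$ after extracting subsequences, using the Bourgain-space compactness/contraction on short intervals) to identify the limit as the solution of \eqref{msskdv}. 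Combining the local existence, the uniform a priori bound, and this limit passage yields the unique global strong solution asserted in Theorem \ref{thm:main}.
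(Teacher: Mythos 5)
Your proposal follows essentially the same route as the paper: local solutions from Theorem \ref{thm:local_well-posedness}, an $\mathcal{H}^1$ a priori bound obtained from the conservation laws of the doubly cut-off (physical and frequency) approximation equations \eqref{approxiamation_eq} via It\^o's formula and Gronwall (Propositions \ref{prop:high_globel_well-posedness}--\ref{prop:uniform_esti}), a pathwise transfer of this bound to the Bourgain norms by subdividing $[0,T_0]$ into finitely many random short intervals, and limit passages in the cut-off parameters to conclude that the stopping times reach $T_0$ almost surely. The only cosmetic difference is the ordering — the paper performs the pathwise Bourgain-norm estimate at the level of the approximations $(u_{m,n},v_{m,n})$ (Lemma \ref{lem:priori_estimate_bourgain_norm_u_m,n}) and then passes to the limits in $K$, $n$, $m$, whereas you would take the $\mathcal{H}^1$ limit first — but the substance of the argument is the same.
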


We should notice that although our initial values are deterministic, by the standard method in \cite{de1999stochastic} or \cite{schrodingerH1}, our result can be generalized to the case  $(u_0,w_0)\in H_x^1\times H_x^1$ almost surely. Correspondingly, the result will turn to $(u,w)\in C\left([0,T_0];H_x^1\times H_x^1\right)$ almost surely.

Without loss of generality, we let $\beta=\gamma_1=\gamma_2=1$ in the following paper.

\section{Fundamental Inequalities}\label{sec:ineq}
To deal with the nonlinear terms, we need some bilinear and trilinear estimates in the Bourgain space. 

Since $\text{Im}u= -i/2\cdot(u-\bar{u})$ and $\text{Re} u=1/2(u+\bar{u})$, in this and the next section, we let $F(u)$ be $u$ or $\bar{u}$.

We first concern the coupling term in the Schr\"{o}dinger-type equation. The following basic inequality will be used.
\begin{lemma}\label{lem:basic_inequality_homogeneous}
	For any $a,b\in(1/4,1/2)$, we have
	$$
	\int_{\mathbb{R}} \frac{1}{\left<x-\alpha\right>^{2a}\left<x-\beta\right>^{2b}}  dx \leq \frac{C}{\left< \alpha-\beta\right>^{2a+2b-1}} .
	$$
\end{lemma}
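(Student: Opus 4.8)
This is a standard convolution estimate for Japanese brackets, and the plan is to prove it by a translation followed by a three–region decomposition. First I would substitute $y=x-\beta$ and set $d=\alpha-\beta$, which reduces the claim to
$$
\int_{\mathbb{R}} \frac{dy}{\langle y\rangle^{2b}\,\langle y-d\rangle^{2a}} \leq \frac{C}{\langle d\rangle^{2a+2b-1}} ;
$$
by the reflection $y\mapsto -y$ one may assume $d\geq 0$. When $0\le d\le 1$ the right–hand side is comparable to a constant, while the left–hand side is a fixed convergent integral: on $|y|\le 2$ the integrand is bounded by $1$, and on $|y|>2$ both brackets are comparable to $\langle y\rangle$, so the integrand is $\lesssim\langle y\rangle^{-2a-2b}$, which is integrable because $a,b>1/4$ gives $2a+2b>1$. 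Hence the estimate is trivial in this range, and from here on I would assume $d\geq 1$, so that $\langle d\rangle\sim d$.

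For $d\geq 1$ I would split $\mathbb{R}$ into $A=\{|y|\leq d/2\}$, $B=\{|y-d|\leq d/2\}$ and $C=\mathbb{R}\setminus(A\cup B)$. On $A$ one has $|y-d|\geq d/2$, hence $\langle y-d\rangle^{-2a}\lesssim\langle d\rangle^{-2a}$, and since $2b<1$,
$$
\int_A \frac{dy}{\langle y\rangle^{2b}} \lesssim \langle d\rangle^{1-2b},
$$
so the contribution of $A$ is $\lesssim\langle d\rangle^{1-2a-2b}$; the region $B$ is handled identically with the roles of $a$ and $b$ exchanged, using $2a<1$. On $C$ one has $|y|>d/2$ and $|y-d|>d/2$, so $d<2|y|$ and $d<2|y-d|$, whence $|y-d|<|y|+d<3|y|$ and symmetrically $|y|<3|y-d|$; thus $\langle y\rangle\sim\langle y-d\rangle$ on $C$, the integrand is $\lesssim\langle y\rangle^{-(2a+2b)}$, and because $2a+2b>1$,
$$
\int_C \frac{dy}{\langle y\rangle^{2b}\langle y-d\rangle^{2a}} \lesssim \int_{|y|>d/2} \frac{dy}{\langle y\rangle^{2a+2b}} \lesssim \langle d\rangle^{1-2a-2b}.
$$
Summing the three pieces gives the claim with a constant $C=C(a,b)$.

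The only thing requiring care is the bookkeeping in this region splitting: checking that the two elementary integrals have exactly the exponents claimed — the ball integrals on $A$ and $B$ use $2a<1$ and $2b<1$, while the tail integral on $C$ uses $2a+2b>1$ — and that the two brackets are genuinely comparable on $C$. There is no real analytic obstacle; the hypotheses $a,b\in(1/4,1/2)$ are precisely what make each of the three pieces come out with the decay rate $\langle d\rangle^{1-2a-2b}$, and the constant degenerates exactly as $a$ or $b$ tends to $1/4$, or $a+b$ tends to $1/2$, in agreement with the failure of the estimate at those endpoints.
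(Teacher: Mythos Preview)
Your proof is correct and follows essentially the same approach as the paper: a translation followed by a three–region split (near $0$, near $d$, and the tail), with the same use of $2a<1$, $2b<1$ on the inner pieces and $2a+2b>1$ on the tail. The paper's version is terser---it simply writes down the three bounding integrals without the case $|d|\le 1$ or the comparability argument on $C$---but the decomposition and the estimates are the same.
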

\begin{proof}
	This can be proved by the fact
	$$
	\begin{aligned}
		&\int_{\mathbb{R}} \frac{1}{\left<x-\alpha\right>^{2a}\left<x-\beta\right>^{2b}}  dx \\
		=&\int_{\mathbb{R}} \frac{1}{\left<x\right>^{2a}\left<x+\alpha-\beta\right>^{2b}}  dx\\
		\leq&	2\int_{|\alpha-\beta|/2}^{+\infty}\frac{dx}{(1+x)^{2a+2b}}+2\int_{0}^{|\alpha-\beta|/2}\frac{dx}{(1+x)^{2a}(1+|\alpha-\beta|/2)^{2b}}\\
		&+2\int_{0}^{|\alpha-\beta|/2}\frac{dx}{(1+x)^{2b}(1+|\alpha-\beta|/2)^{2a}}.
	\end{aligned}
	$$
\end{proof}

\begin{lemma}\label{lem:KS_to_S}
	Suppose $a,b\in(1/4,1/2)$ and $ a+2b>1$. Then for any $g\in X_{b,1},\  h\in Y_{b,1}$, we have
	\begin{equation}\label{KS_to_S}
		\|gh\|_{X_{-a,1}}\leq C \| g\|_{X_{b,1}}\| h\|_{Y_{b,1}}.
	\end{equation}
\end{lemma}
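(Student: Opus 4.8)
The plan is to estimate the bilinear form by duality: writing $\|gh\|_{X_{-a,1}}$ via pairing against a test function $f$ with $\|f\|_{X_{a,1}}=1$, the estimate reduces to a weighted convolution bound on the Fourier side. After the usual reductions — replacing $(1+|\xi|)$ factors by $\langle\xi\rangle$, using $\langle\xi\rangle \lesssim \langle\xi_1\rangle\langle\xi_2\rangle$ when $\xi=\xi_1+\xi_2$ to absorb the weight coming from $gh$ into the weights of $g$ and $h$, and setting $\lambda=\tau+\xi^2$, $\lambda_1=\tau_1+\xi_1^2$, $\lambda_2=\tau_2-\xi_2^3$ for the respective dispersive relations — one is left with controlling
$$
\int \frac{F(\xi,\tau)\,G(\xi_1,\tau_1)\,H(\xi_2,\tau_2)}{\langle\lambda\rangle^{a}\langle\lambda_1\rangle^{b}\langle\lambda_2\rangle^{b}}\,,
$$
integrated over $\xi=\xi_1+\xi_2$, $\tau=\tau_1+\tau_2$, where $F,G,H\ge 0$ are $L^2$ functions. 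By Cauchy–Schwarz in $(\xi,\tau)$ (or $(\xi_1,\tau_1)$), this is bounded once we show the relevant $L^\infty$ bound on the integral of the product of the three resolvent weights in the remaining two variables.

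First I would carry out the Fourier-side setup and the weight-splitting just described, reducing to the trilinear integral. Second, I would fix $\xi$ (equivalently $\xi_1+\xi_2$) and $\tau$, and bound the $\tau_1$-integral of $\langle\lambda_1\rangle^{-b}\langle\lambda_2\rangle^{-b}$; since $\lambda_1+(\text{something independent of }\tau_1)=\lambda_2$... more precisely $\lambda_1 = \tau_1+\xi_1^2$ and $\lambda_2=(\tau-\tau_1)-\xi_2^3$, these are two affine functions of $\tau_1$ with opposite slopes, so the one-dimensional convolution estimate $\int \langle\tau_1-c_1\rangle^{-2b}\langle\tau_1-c_2\rangle^{-2b}\,d\tau_1 \lesssim \langle c_1-c_2\rangle^{1-4b}$ (valid for $b\in(1/4,1/2)$) applies — but to get a clean power I would rather keep it at $\int\langle\lambda_1\rangle^{-2b}\langle\lambda_2\rangle^{-2b}d\tau_1 \lesssim 1$ when $4b>1$, i.e. absorb the resonance gain only where needed. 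Third, I would perform the $\xi_1$-integral: after the $\tau$-integrations the surviving weight is $\langle\lambda\rangle^{-2a}$ together with whatever $1-4b$ power came out, and the resonance identity
$$
\lambda - \lambda_1 - \lambda_2 = \xi^2 - \xi_1^2 + \xi_2^3 = \xi^2 - \xi_1^2 + (\xi-\xi_1)^3,
$$
a cubic in $\xi_1$ with leading coefficient $-1$, shows that at least one of $\langle\lambda\rangle,\langle\lambda_1\rangle,\langle\lambda_2\rangle$ is $\gtrsim |\xi_1|$ (more precisely $\gtrsim$ the spacing of the roots). Distinguishing the three cases according to which resolvent is large, I would use that large factor to make the remaining $\xi_1$-integral converge, invoking Lemma~\ref{lem:basic_inequality_homogeneous} to handle the two leftover weights of the form $\langle\xi_1-\alpha\rangle^{-2p}\langle\xi_1-\beta\rangle^{-2q}$.

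The main obstacle I anticipate is the low-frequency / small-dispersion region where all three modulations $\langle\lambda\rangle,\langle\lambda_1\rangle,\langle\lambda_2\rangle$ are comparably small and the frequencies are not separated, so the algebraic gain from the resonance identity degenerates; here the restriction $a+2b>1$ (together with $a,b>1/4$) is exactly what is needed to close the spatial integral — the condition $a+2b>1$ should surface as the requirement that $2a + (4b-1) > 1$ after the $\tau$-integrations leave weight $\langle\xi_1-\alpha\rangle^{-2a}\langle\xi_1-\beta\rangle^{-(4b-1)}$ and Lemma~\ref{lem:basic_inequality_homogeneous} demands the sum of exponents exceed $1$. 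A secondary point to be careful about is that $g$ carries the Schrödinger weight and $h$ the KdV weight, so the two dispersive symbols are genuinely different; this is what makes the resonance polynomial a cubic rather than a quadratic, which actually helps (better frequency separation) except precisely in the degenerate region above. I expect no serious difficulty from the $\langle\xi\rangle^{1}$ weights since $1\le\langle\xi_1\rangle\langle\xi_2\rangle$ distributes them harmlessly, nor from the passage from $X_{-a,1}$-norm to the dual pairing, which is standard.
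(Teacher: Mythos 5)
Your plan is essentially the paper's proof: duality, distributing the $\left<\xi\right>$ weight via $\left<\xi\right>\leq\left<\xi_1\right>\left<\xi_2\right>$, Cauchy--Schwarz, a case analysis according to which of the three modulations dominates, the one-dimensional $\tau$-convolution bound (Lemma \ref{lem:basic_inequality_homogeneous}), and a change of variables in $\xi_1$ driven by the Schr\"odinger--KdV resonance polynomial, with the exponent bookkeeping closing exactly under $a+2b>1$ (and extra care in the degenerate set where the polynomial's $\xi_1$-derivative is small, which the paper handles by noting that set has bounded length). Only minor slip: the dual pairing should be against $f$ normalized in $X_{a,-1}$, not $X_{a,1}$, which your weight-splitting step already implicitly corrects.
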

\begin{proof}
	By the duality, we only need to prove that for $f,g,h\in\mathscr{S}(\mathbb{R}^2)$ 
	$$
	|(f,gh)|\leq C \|f\|_{X_{a,-1}}\|g\|_{X_{b,1}}\|h\|_{Y_{b,1}}.
	$$
	By the Plancherel Theorem, we will estimate
	\begin{equation*}
		\begin{aligned}
			&\int_{\mathbb{R}^4} \frac{\left< \tau+\xi^2\right>^a}{\left<\xi \right>}\hat{f}(\xi,\tau)\left<\tau-\tau_1+(\xi-\xi_1)^2\right>^b\left<\xi-\xi_1\right>\bar{\hat{g}}(\tau-\tau_1,\xi-\xi_1)\\
			&\frac{\left<\xi_1\right>\left<\tau_1-\xi_1^3\right>^b\bar{\hat{h}}(\tau_1,\xi_1)}{\left< \tau+\xi^2\right>^a\left<\tau-\tau_1+(\xi-\xi_1)^2\right>^b\left<\tau_1-\xi_1^3\right>^b}d\xi d\xi_1 d\tau d\tau_1.
		\end{aligned}
	\end{equation*}
	Note that $\left<\xi\right>\leq\left<\xi- \xi_1\right>\left<\xi_1\right>$, it is sufficient to estimate
	\begin{equation}\label{esti_KS_to_S}
		\begin{aligned}
			&\int_{\mathbb{R}^4} \left< \tau+\xi^2\right>^a\hat{f}(\xi,\tau)\left<\tau-\tau_1+(\xi-\xi_1)^2\right>^b\bar{\hat{g}}(\tau-\tau_1,\xi-\xi_1)\\
			&
			\frac{\left<\tau_1-\xi_1^3\right>^b\bar{\hat{h}}(\tau_1,\xi_1)}{\left< \tau+\xi^2\right>^a\left<\tau-\tau_1+(\xi-\xi_1)^2\right>^b\left<\tau_1-\xi_1^3\right>^b}d\xi d\xi_1 d\tau d\tau_1.
		\end{aligned}
	\end{equation}
	We set 
	$$
	E(\xi,\tau,\xi_1,\tau_1)=\max{\left\{\left< \tau+\xi^2\right>,\ \left<\tau_1-\xi_1^3\right>,\ \left<\tau-\tau_1+(\xi-\xi_1)^2\right>\right\}}
	$$
	and divide $\mathbb{R}^4$ into 
	$$
	\begin{aligned}
		&{\bf Reigon~I.}\ \   \left\{\mathbb{R}^4: \left< \tau+\xi^2\right>\geq 	E(\xi,\tau_1,\xi_1,\tau_1)\right\}; \\
		&{\bf Reigon~II.}\   \left\{\mathbb{R}^4: \left<\tau_1-\xi_1^3\right>\geq 	E(\xi,\tau_1,\xi_1,\tau_1)\right\}; \\
		&{\bf Reigon~III.} \left\{\mathbb{R}^4: \left<\tau-\tau_1+(\xi-\xi_1)^2\right>\geq 	E(\xi,\tau_1,\xi_1,\tau_1)\right\}.
	\end{aligned}
	$$
	{\bf Reigon~I.:} By Cauchy-Schwarz inequality, we only need to prove  
	\begin{equation}\label{KS_to_S_I}
		\begin{aligned}
			&\sup_{\mathbb{R}^2_{\xi,\tau}}\frac{1}{\left< \tau+\xi^2\right>^a}\left(\int_{\left< \tau+\xi^2\right>\geq 	E(\xi,\tau,\xi_1,\tau_1)}\frac{d\xi_1d\tau_1}{\left<\tau-\tau_1+(\xi-\xi_1)^2\right>^{2b}\left<\tau_1-\xi_1^3\right>^{2b}}\right)^{1/2}\\
			=:&\sup_{\mathbb{R}^2_{\xi,\tau}}N_1(\xi,\tau)
			<\infty.
		\end{aligned}
	\end{equation}
	Calculating the integral of $\tau_1$, we have
	\begin{equation}\label{KS_to_S_I_1}
		\begin{aligned}
			&\sup_{\mathbb{R}^2_{\xi,\tau}}N_1(\xi,\tau)\\
			\leq&\sup_{\mathbb{R}^2_{\xi,\tau}}\frac{1}{\left< \tau+\xi^2\right>^a}\left( \int_{\left< \tau+\xi^2\right>\geq 	E(\xi,\tau_1,\xi_1,\tau_1)} \frac{d\xi_1}{\left<\tau+\xi^2+\xi_1^2-2\xi\xi_1-\xi_1^3 \right>^{4b-1}}  \right)^{1/2}.
		\end{aligned}
	\end{equation}
	
	If $|\xi_1|\leq 100$, then we have 
	\begin{equation}\label{KS_to_S_I_result_1}
		\begin{aligned}
			\sup_{\mathbb{R}^2_{\xi,\tau}}N_1(\xi,\tau)\leq C.
		\end{aligned}
	\end{equation}
	
	If $|\xi_1|> 100$ and $|2\xi_1-2\xi-3\xi_1^2|>1$,  then we set $\mu=\tau+\xi^2+\xi_1^2-2\xi\xi_1-\xi_1^3$. Thus, it implies that
	\begin{equation}\label{KS_to_S_I_result_2}
		\begin{aligned}
			\sup_{\mathbb{R}^2_{\xi,\tau}}N_1(\xi,\tau)\leq& \sup_{\mathbb{R}^2_{\xi,\tau}}\frac{C}{\left< \tau+\xi^2\right>^a}\left(\int_{|\mu|<2|\tau+\xi^2|} \frac{d\mu}{\left<\mu\right>^{4b-1}}\right)^{1/2}\\
			\leq&C \left< \tau+\xi^2\right>^{1-2b-a}<\infty.
		\end{aligned}
	\end{equation}
	
	If $|\xi_1|> 100$ and $|2\xi_1-2\xi-3\xi_1^2|\leq1$, then the length of the integral domain of $\xi_1$ is less then $C/100$, which means in this condition  
	\begin{equation}\label{KS_to_S_I_result_3}
		\sup_{\mathbb{R}^2_{\xi,\tau}}\frac{C}{\left< \tau+\xi^2\right>^a}<\infty.
	\end{equation}
	{\bf Reigon~II.:} In this case, if $|\xi_1|\geq 1$, it is sufficient to prove
	\begin{equation}\label{KS_to_S_II_result_1}
		\begin{aligned}
			&\sup_{\mathbb{R}^2_{\xi_1,\tau_1}}\frac{1}{\left< \tau_1-\xi_1^3\right>^b}\left(\int_{\left< \tau_1-\xi_1^3\right>\geq	E(\xi,\tau,\xi_1,\tau_1)}\frac{d\xi d\tau }{\left<\tau-\tau_1+(\xi-\xi_1)^2\right>^{2b}\left<\tau+\xi^2\right>^{2a}}\right)^{1/2}\\
			\leq&\sup_{\mathbb{R}^2_{\xi_1,\tau_1}}\frac{1}{\left< \tau_1-\xi_1^3\right>^b}\left(\int_{|\tau_1-\xi_1^2+2\xi\xi_1|\leq 2|\tau_1-\xi_1^3|}\frac{d\xi }{\left<\tau_1-\xi_1^2+2\xi\xi_1\right>^{2a+2b-1}}\right)^{1/2}\\
			\leq&C\sup_{\mathbb{R}^2_{\xi_1,\tau_1}}\left< \tau_1-\xi_1^3\right>^{1-a-2b}<\infty.
		\end{aligned}
	\end{equation}
	
	If $|\xi_1|< 1$ and $\left< \tau_1-\xi_1^3\right>\sim \left< \tau+\xi^2\right>+\left<\tau-\tau_1+(\xi-\xi_1)^2 \right>$, then this case can be treated like Reigon.I or Reigon.III.

	If $|\xi_1|< 1$ and $\left< \tau_1-\xi_1^3\right>\gg \left< \tau+\xi^2\right>+\left<\tau-\tau_1+(\xi-\xi_1)^2 \right>$, then we have 
	$$
	|\tau_1-\xi_1^3|\sim|\tau_1-\xi_1^3-\tau-\xi^2+\tau-\tau_1+(\xi-\xi_1)^2|=|\xi_1||\xi_1^2+\xi-\xi_1|\gg1
	$$
	and
	$$
	|\xi_1^2+\xi-\xi_1|\gg1,   \ |\xi|\gg1.
	$$
	(If $|\tau_1-\xi_1^3|\sim 1$, the proof is obvious.)
	
	Thus, by Cauchy-Schwarz inequality it is sufficient to prove 
	\begin{equation}\label{KS_to_S_II_result_2}
		\begin{aligned}
		&\sup_{\mathbb{R}^2_{\xi,\tau}}\frac{1}{\left<\tau+\xi^2\right>^a}\left(\int_{|\xi_1|\leq1,\ |\xi_1||\xi_1^2+\xi-\xi_1|\gg1 }\frac{d\xi_1}{\left<\tau+\xi^2-\xi_1^3+\xi_1^2-2\xi\xi_1\right>^{4b-1}}\right)^{1/2}\\
		<&\infty,
		\end{aligned}
	\end{equation}
which is clear.

	{\bf Reigon~III.} In this case, we first let $ \sigma=\tau-\tau_1$ and $ \eta=\xi-\xi_1$.
	Then we will estimate
	\begin{equation}\label{KS_to_S_III}
		\begin{aligned}
			&\sup_{\mathbb{R}^2_{\eta,\sigma}}\frac{1}{\left< \sigma+\eta^2\right>^b}\left(\int_{\left<\sigma+\eta^2\right>\geq 	E(\xi,\tau,\xi_1,\tau_1)}\frac{d\xi_1d\tau_1}{\left<\tau_1-\xi_1^3\right>^{2b}\left<\sigma+\tau_1+(\eta+\xi_1)^2\right>^{2a}}\right)^{1/2}\\
			\leq& \sup_{\mathbb{R}^2_{\eta,\sigma}}\frac{1}{\left< \sigma+\eta^2\right>^b}\left(\int_{\left<\sigma+\eta^2\right>\geq 	E(\xi,\tau,\xi_1,\tau_1)}\frac{d\xi_1}{\left<\xi_1^3+\sigma+(\eta+\xi_1)^2\right>^{2a+2b-1}}\right)^{1/2}\\
			=:&\sup_{\mathbb{R}^2_{\eta,\sigma}}N_3(\eta,\sigma).
		\end{aligned}
	\end{equation}
	Therefore, we can prove $\sup_{\mathbb{R}^2_{\eta,\sigma}}N_3(\eta,\sigma)<\infty$ like Reigon I. 
\end{proof}

The following trilinear estimate is about the cubic term in the  Schr\"{o}dinger-type equation.
\begin{lemma}\label{lem:trilinear_estimate}
	Suppose that $ a,b\in(\frac{3}{8},\frac{1}{2})$, we have 
	\begin{equation}\label{trilinear_estimate}
		\||u|^2u\|_{X_{-a,1}}\leq C\|u\|^3_{X_{b,1}}.
	\end{equation}
\end{lemma}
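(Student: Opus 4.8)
The plan is to reduce \eqref{trilinear_estimate} to a weighted convolution estimate by duality and then handle the high-frequency interactions by a case analysis on which of the four modulation weights is largest, exactly in the spirit of Lemma \ref{lem:KS_to_S}. By Plancherel and duality it suffices to bound
\[
\left|\int_{*} \hat f(\xi,\tau)\,\overline{\hat{u}}(\xi_1,\tau_1)\,\hat u(\xi_2,\tau_2)\,\overline{\hat u}(\xi_3,\tau_3)\,d\nu\right|\leq C\|f\|_{X_{a,-1}}\|u\|_{X_{b,1}}^3,
\]
where the integration is over $\xi=\xi_1+\xi_2+\xi_3$ (here the sign pattern reflects $|u|^2u=u\bar u u$, treating $\bar u$ as $\hat{\bar u}(\xi,\tau)=\overline{\hat u}(-\xi,-\tau)$ in the usual way) and $\tau=\tau_1+\tau_2+\tau_3$. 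After inserting the weights we must control
\[
I=\int \frac{\langle\xi\rangle\,G(\xi,\tau)\,\prod_{j=1}^{3}G_j(\xi_j,\tau_j)}{\langle\sigma\rangle^{a}\prod_{j=1}^{3}\langle\sigma_j\rangle^{b}\,\langle\xi_1\rangle\langle\xi_2\rangle\langle\xi_3\rangle}\,d\nu,
\]
where $\sigma=\tau+\xi^2$, $\sigma_j=\tau_j+\xi_j^2$ (with the appropriate sign for the conjugated factors), and $G,G_j\geq0$ are $L^2$. The first reduction is the elementary frequency bound $\langle\xi\rangle\lesssim\langle\xi_1\rangle\langle\xi_2\rangle\langle\xi_3\rangle$, which cancels the three denominators $\langle\xi_j\rangle$ and removes all spatial weights; so it remains to prove an unweighted $L^4$-type estimate with the four modulation weights.

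The core is the resonance identity. Writing everything with the Schrödinger symbol, since all four factors carry weights of the form $\langle\tau_j+\xi_j^2\rangle$, one computes on the support of the convolution that
\[
(\tau+\xi^2)-\sum_{j=1}^3(\tau_j+\xi_j^2)=\xi^2-\sum_{j=1}^{3}\xi_j^2,
\]
and with $\xi=\xi_1+\xi_2+\xi_3$ the right-hand side factors (up to signs depending on the conjugation pattern) as a sum of products $\xi_i\xi_j$. Hence $\max_j\langle\sigma_j\rangle$ together with $\langle\sigma\rangle$ dominates a quantity comparable to $\prod$ or a product of two frequency differences; this is the gain that powers the argument. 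I would then split $\mathbb R^{6}$ (or the relevant hyperplane) into the four regions according to which of $\langle\sigma\rangle,\langle\sigma_1\rangle,\langle\sigma_2\rangle,\langle\sigma_3\rangle$ is the maximum $E$. In each region apply Cauchy–Schwarz in the variables \emph{not} carrying the largest weight (reducing to a sup over the remaining two variables, as in \eqref{KS_to_S_I}), and estimate the resulting integral $\int \langle\sigma\rangle^{-2a}$ or $\int\langle\sigma_j\rangle^{-2b}$ over the constrained domain using the one-dimensional bound $\int d\mu/\langle\mu\rangle^{2\theta}\lesssim\langle R\rangle^{1-2\theta}$ valid for $\theta\in(1/4,1/2)$ (this is where $a,b>3/8$, so that $4b-1>1/2$, is used to make the relevant exponents summable; the threshold $3/8$ is sharp for this scheme because one needs roughly $a+2b>\tfrac{3}{2}$-type inequalities after pairing two small weights). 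The computations mirror \eqref{KS_to_S_I_result_2}, \eqref{KS_to_S_II_result_1} and \eqref{KS_to_S_III}.

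The main obstacle I anticipate is \emph{not} the generic case but the degenerate low-frequency configurations, analogous to the sub-case $|\xi_1|<1$ treated separately in Region II of Lemma \ref{lem:KS_to_S}: when one or more of the $\xi_j$ is small, the factorization of the resonance function degenerates and the naive modulation gain is lost. To handle these I would further dyadically decompose in the smallest frequency and exploit that either the corresponding $\langle\xi_j\rangle$-weight is harmless (it is just $\sim1$) or that on such a set the remaining resonance quantity $|\xi_1\xi_2+\xi_1\xi_3+\xi_2\xi_3|$ is still large, so the measure of the admissible $\xi$-domain is small — exactly the mechanism used around \eqref{KS_to_S_II_result_2}. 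A second, milder technical point is bookkeeping the conjugation: because $F(u)$ may be $u$ or $\bar u$ and the cubic term is $u\bar u u$, the signs in the symbol differ, but the resonance function is in every case a nondegenerate quadratic form in the $\xi_j$'s on the relevant hyperplane, so the same case analysis applies verbatim. Putting the four regional bounds together yields \eqref{trilinear_estimate}.
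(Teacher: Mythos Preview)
Your approach is genuinely different from the paper's, and while the overall strategy of modulation/resonance analysis for cubic NLS is classical and can be made to work, your sketch glosses over a real difficulty and misses the much shorter route the paper takes.

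The paper does \emph{not} perform any case analysis on the largest modulation. After the same duality step and the same frequency cancellation $\langle\xi\rangle\le\langle\xi_1-\xi\rangle\langle\xi_1-\xi_2\rangle\langle\xi_2\rangle$ that you propose, it goes back to physical space by Plancherel and simply applies H\"older with four $L^4_{x,t}$ factors, then invokes the Strichartz-type embedding $X_{c,0}\hookrightarrow L^4_{x,t}$ for any $c>3/8$ (Lemma~2.9 in \cite{Tao2006}). That single embedding is exactly where the threshold $3/8$ enters, and the proof is finished in two lines. No resonance identity, no region decomposition, no degenerate low-frequency subcases.

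By contrast, your plan to ``apply Cauchy--Schwarz in the variables not carrying the largest weight (reducing to a sup over the remaining two variables, as in \eqref{KS_to_S_I})'' does not transfer cleanly from the bilinear to the trilinear setting: in the trilinear case the multiplier lives on a four-dimensional hyperplane, so after fixing the pair carrying the maximal modulation you are left with a \emph{four}-dimensional integral (two $\xi$'s and two $\tau$'s), not a two-dimensional one. Your appeal to the one-dimensional bound $\int d\mu/\langle\mu\rangle^{2\theta}\lesssim\langle R\rangle^{1-2\theta}$ is therefore not sufficient as stated; one has to integrate out the two temporal variables iteratively (pairing the three $\langle\sigma_j\rangle^{-2b}$ weights carefully) before using the factorization $\Omega=2(\xi_1-\xi_2)(\xi_3-\xi_2)$ of the resonance function to control the remaining two-dimensional $\xi$-integral. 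This is doable, but it is substantially more work than you indicate, and the ``degenerate low-frequency configurations'' you flag are precisely where this bookkeeping becomes delicate. The paper's $L^4$ argument bypasses all of it.
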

\begin{proof}
	By duality, we need to estimate for any $f\in X_{a,-1}$ 
	$$
	\begin{aligned}
		&\left(f,|u|^2u\right)\\
		=&\left(\hat{f},\hat{\bar{u}}\ast(\hat{u}\ast \hat{u}) \right)\\
		=&\int_{\mathbb{R}^6}\frac{\hat{f}(\tau,\xi)\left< \xi\right>^{-1}\left< \tau+\xi^2\right>^a\hat{u}(\tau_1-\tau,\xi_1-\xi)\left< \xi_1-\xi\right>}{\left< \xi\right>^{-1}\left< \tau+\xi^2\right>^a\left< \xi_1-\xi\right>}\\
		& \frac{\left<\tau_1-\tau+(\xi_1-\xi)^2\right>^b\bar{\hat{u}}(\tau_1-\tau_2,\xi_1-\xi_2)\left<\xi_1-\xi_2\right>\left<\tau_1-\tau_2+(\xi_1-\xi_2)^2\right>^b}{\left<\tau_1-\tau+(\xi_1-\xi)^2\right>^b\left<\xi_1-\xi_2\right>\left<\tau_1-\tau_2+(\xi_1-\xi_2)^2\right>^b}\\
		&\frac{\bar{\hat{u}}(\tau_2,\xi_2)\left<\xi_2\right>\left<\tau_2+\xi_2^2\right>^b}{\left<\xi_2\right>\left<\tau_2+\xi_2^2\right>^b}  d\xi d\tau d\tau_1d\xi_1d\tau_2d\xi_2.
	\end{aligned}
	$$
	Let 
	$$
	\varphi(\tau,\xi)=\hat{f}(\tau,\xi)\left< \tau+\xi^2\right>^a,\  
	\bar{g}(\tau,\xi)=\hat{u}(-\tau,-\xi)\left<-\tau+\xi^2\right>^b,
	$$
	$$
	h(\tau,\xi)=\hat{u}(\tau,\xi)\left<\tau+\xi^2\right>^b,\  
	k(\tau,\xi)=\hat{u}(\tau,\xi)\left<\tau+\xi^2\right>^b.
	$$
	Because of $\left< \xi\right>\leq\left< \xi_1-\xi\right>\left<\xi_1-\xi_2\right>\left<\xi_2\right>$ and the Plancherel Theorem, it deduces to 
	\begin{equation}\label{trilinear_estimate_result_1}
		\begin{aligned}
			&\left(f,|u|^2u\right)\\
			\leq&C\int_{\mathbb{R}^2}  \Big|F^{-1}_{x,t}[\frac{\varphi(\tau,\xi)}{\left<\tau+\xi^2\right>^a}]\Big|\Big|F^{-1}_{x,t}[\frac{g(\tau,\xi)}{\left<-\tau+\xi^2\right>^b}]\Big| \Big|F^{-1}_{x,t}[\frac{h(\tau,\xi)}{\left<\tau+\xi^2\right>^b}]\Big|\\
			&\Big|F^{-1}_{x,t}[\frac{k(\tau,\xi)}{\left<\tau+\xi^2\right>^b}]\Big| dxdt\\
			\leq& C\Big\|F^{-1}_{x,t}[\frac{\varphi(\tau,\xi)}{\left<\tau+\xi^2\right>^a}]\Big\|_{L^4_{x,t}}\Big\|F^{-1}_{x,t}[\frac{g(\tau,\xi)}{\left<-\tau+\xi^2\right>^b}]\Big\|_{L^4_{x,t}}\Big\|F^{-1}_{x,t}[\frac{h(\tau,\xi)}{\left<\tau+\xi^2\right>^b}]\Big\|_{L^4_{x,t}}\\
			&\Big\|F^{-1}_{x,t}[\frac{k(\tau,\xi)}{\left<\tau+\xi^2\right>^b}]\Big\|_{L^4_{x,t}}\\
			=&C\Big\|F^{-1}_{x,t}[\frac{\varphi(\tau,\xi)}{\left<\tau+\xi^2\right>^a}]\Big\|_{L^4_{x,t}}\Big\|F^{-1}_{x,t}[\frac{g(-\tau,-\xi)}{\left<\tau+\xi^2\right>^b}]\Big\|_{L^4_{x,t}}\Big\|F^{-1}_{x,t}[\frac{h(\tau,\xi)}{\left<\tau+\xi^2\right>^b}]\Big\|_{L^4_{x,t}}\\
			&\Big\|F^{-1}_{x,t}[\frac{k(\tau,\xi)}{\left<\tau+\xi^2\right>^b}]\Big\|_{L^4_{x,t}}
		\end{aligned}
	\end{equation}
	By Lemma 2.9 in \cite{Tao2006}, we have that 
	\begin{equation}\label{trilinear_estimate_result_2}
		\begin{aligned}
			&\left(f,|u|^2u\right)\\
			\leq& C\Big\|F^{-1}_{x,t}[\frac{\varphi(\tau,\xi)}{\left<\tau+\xi^2\right>^a}]\Big\|_{X_{c,0}}\Big\|F^{-1}_{x,t}[\frac{g(-\tau,-\xi)}{\left<\tau+\xi^2\right>^b}]\Big\|_{X_{c,0}}\Big\|F^{-1}_{x,t}[\frac{h(\tau,\xi)}{\left<\tau+\xi^2\right>^b}]\Big\|_{X_{c,0}}\\
			&\Big\|F^{-1}_{x,t}[\frac{k(\tau,\xi)}{\left<\tau+\xi^2\right>^b}]\Big\|_{X_{c,0}}\\
			\leq&C\left\|\varphi\right\|_{L^2_{\tau,\xi}}\left\|g\right\|_{L^2_{\tau,\xi}}\left\|h\right\|_{L^2_{\tau,\xi}}\left\|k\right\|_{L^2_{\tau,\xi}},
		\end{aligned}
	\end{equation}
	for any $c\in(\frac{3}{8},a\wedge b) $.
	Hence, we finish the proof.
	
\end{proof}
 The next lemma is about the coupling term in KdV-type equation. 
 \begin{lemma}\label{lem:SS_to_K}
 	Suppose that $a\in(\frac{1}{4},\frac{1}{2})$, $b\in(\frac{1}{3},\frac{1}{2})$ and $a+2b>\frac{4}{3}$. Then for any $g,h\in X_{b,1}$, we have
 	\begin{equation}\label{SS_to_K_1}
 		\|\partial_x(g\bar{h})\|_{Y_{-a,1}}\leq C \| g\|_{X_{b,1}}\| h\|_{X_{b,1}}
 	\end{equation}
 	and 
 	\begin{equation}\label{SS_to_K_2}
 		\|\partial_x(g\bar{h})\|_{Y_{-a,1,-3/8}}\leq C \| g\|_{X_{b,1}}\| h\|_{X_{b,1}}.
 	\end{equation}
 \end{lemma}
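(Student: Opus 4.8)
The plan is to dualize and run the region analysis exactly as in the proof of Lemma~\ref{lem:KS_to_S}. For \eqref{SS_to_K_1}, by duality it suffices to show that for Schwartz functions $f,g,h$
$$
\bigl|\bigl(f,\partial_x(g\bar h)\bigr)\bigr|\le C\|f\|_{Y_{a,-1}}\|g\|_{X_{b,1}}\|h\|_{X_{b,1}},
$$
and for \eqref{SS_to_K_2} the same bound with $\|f\|_{Y_{a,-1}}$ replaced by the dual norm $\bigl(\int_{\mathbb{R}^2}|\xi|^{3/4}\langle\xi\rangle^{-2}\langle\tau-\xi^3\rangle^{2a}|\hat f(\xi,\tau)|^2\,d\xi d\tau\bigr)^{1/2}$. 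After expanding $\widehat{\partial_x(g\bar h)}(\xi,\tau)=i\xi\int\hat g(\xi_1,\tau_1)\overline{\hat h(\xi-\xi_1,\tau-\tau_1)}\,d\xi_1d\tau_1$, applying the Plancherel Theorem, and introducing $L^2$ densities $F,G,H$ that absorb the Bourgain weights of $f,g,h$ respectively, the left-hand side of the first inequality becomes an integral over the set $\xi=\xi_1-\xi_2,\ \tau=\tau_1-\tau_2$ of
$$
\frac{|\xi|\,\langle\xi\rangle}{\langle\xi_1\rangle\langle\xi_2\rangle\,\langle\tau-\xi^3\rangle^{a}\,\langle\tau_1+\xi_1^2\rangle^{b}\,\langle\tau_2+\xi_2^2\rangle^{b}}\,|F||G||H|,
$$
and for the second, the same with $|\xi|\langle\xi\rangle$ replaced by $|\xi|^{5/8}\langle\xi\rangle$. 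Since $\langle\xi\rangle\le\langle\xi_1\rangle\langle\xi_2\rangle$, the spatial weights are harmless and the whole difficulty reduces to absorbing the derivative factor $|\xi|$ (resp. $|\xi|^{5/8}$).

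The key algebraic fact is the resonance identity
$$
(\tau-\xi^3)-(\tau_1+\xi_1^2)+(\tau_2+\xi_2^2)=-\xi\,(\xi^2+\xi_1+\xi_2)
$$
on the constraint set, which yields $\max\{\langle\tau-\xi^3\rangle,\langle\tau_1+\xi_1^2\rangle,\langle\tau_2+\xi_2^2\rangle\}\gtrsim|\xi|\,|\xi^2+\xi_1+\xi_2|$. I would first separate the ``degenerate'' set $\{|\xi^2+\xi_1+\xi_2|\le1\}$, on which $|\xi^2+2\xi_1-\xi|\le1$ forces $|\xi_1|,|\xi_2|\gtrsim|\xi|^2$ whenever $|\xi|\gtrsim1$, so that the spatial weights $\langle\xi_1\rangle\langle\xi_2\rangle\gtrsim|\xi|^2\gtrsim|\xi|\langle\xi\rangle$ (which we keep in this region) already swallow the derivative; for $|\xi|\lesssim1$ the derivative is trivially harmless, and in both cases what remains is a derivative-free bilinear expression handled by Cauchy--Schwarz and explicit one-variable integrations in $\tau_1$ and $\xi_1$, as in \eqref{KS_to_S_I_1}--\eqref{KS_to_S_I_result_3}. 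On the complementary set the maximal modulation is $\gtrsim|\xi|$, so I split into the three regions where each of $\langle\tau-\xi^3\rangle$, $\langle\tau_1+\xi_1^2\rangle$, $\langle\tau_2+\xi_2^2\rangle$ is the largest; in each region I spend a fractional power of that modulation factor to dominate $|\xi|$, and the residue is again estimated by Cauchy--Schwarz followed by explicit integration, mirroring the three regions of Lemma~\ref{lem:KS_to_S} and using Lemma~\ref{lem:basic_inequality_homogeneous} together with a change of variables. The hypotheses $a\in(1/4,1/2)$, $b\in(1/3,1/2)$ and $a+2b>4/3$ are exactly what is needed to keep the resulting $\tau$-integrals convergent (observe that $a+2b>4/3$ with $a<1/2$ forces $b>5/12$) and to control the $\xi$-integrals after the change of variables.

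Estimate \eqref{SS_to_K_2} requires no new idea: in the reduced integral the numerator carries $|\xi|^{5/8}$ rather than $|\xi|$, which is strictly smaller at high frequency and vanishes like $|\xi|^{5/8}$ at $\xi=0$, so the high-frequency regions are only easier and the low-frequency contribution near $\xi=0$ is integrable; the same three-region scheme therefore applies verbatim. I expect the main obstacle to be the bookkeeping of the region decomposition: deciding, in each modulation region and in the degenerate subcase, precisely how much of the available modulation power to allocate to the derivative while still leaving enough to make the residual $\xi$- and $\tau$-integrations converge, and handling the change of variables near the points where the relevant Jacobian degenerates. This is where the numerology $b>1/3$, $a>1/4$, $a+2b>4/3$ is really used, and it is the only genuinely delicate point.
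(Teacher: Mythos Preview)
Your strategy---duality, Plancherel, the resonance identity, and a modulation-region decomposition with spatial weights absorbing the derivative in a degenerate zone---is exactly the paper's approach. The organizational difference is that the paper splits into the three modulation regions first and only then, within each region, distinguishes the cases $|\xi_1|\sim|\xi|^2$ (where spatial weights win) and $|\xi_1|\not\sim|\xi|^2$ (where the resonance forces large modulation).

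There is, however, a concrete gap in your decomposition as written. Your threshold $|\xi^2+\xi_1+\xi_2|\le 1$ for the degenerate set is too narrow. On the complementary set you only get $E\gtrsim|\xi|$, and in Region~I (say) this leads, after Cauchy--Schwarz, $\tau_1$-integration via Lemma~\ref{lem:basic_inequality_homogeneous}, and the change of variable $d\mu=2|\xi|\,d\xi_1$, to the bound
\[
C\,|\xi|^{1/2}\,\langle\tau-\xi^3\rangle^{\,1-a-2b}\ \le\ C\,|\xi|^{1/2}\cdot|\xi|^{\,1-a-2b}\ =\ C\,|\xi|^{\,3/2-a-2b},
\]
which is unbounded whenever $a+2b\in(4/3,3/2)$. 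The fix---and this is what the paper does---is to declare the degenerate set to be $\{|\xi_1|\sim|\xi|^2\}$ (equivalently $|\xi^2+\xi_1+\xi_2|\lesssim|\xi|^2$). On that enlarged set one still has $\langle\xi_1\rangle\langle\xi_2\rangle\gtrsim|\xi|^4$, which more than absorbs $|\xi|\langle\xi\rangle$; on its complement the resonance factor is $\gtrsim|\xi|^2$, hence $E\gtrsim|\xi|^3$, and the same computation now gives $|\xi|^{1/2}\langle\tau-\xi^3\rangle^{1-a-2b}\lesssim|\xi|^{7/2-3(a+2b)}$, which is bounded since $a+2b>7/6$. An analogous threshold (at $|\xi_1|\sim|\xi|^2$, with an extra Jacobian subcase where $|3\xi^2+2\xi-2\xi_1|<1$) is needed in Regions~II and~III; this is where the hypothesis $b>1/3$ is actually used. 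Once you move the threshold, the rest of your outline goes through as you describe, and your treatment of \eqref{SS_to_K_2} is fine.
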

 \begin{proof}
 	We first prove \eqref{SS_to_K_1}. By the duality, it is sufficient to prove that for $f,g,h\in\mathscr{S}(\mathbb{R}^2)$ 
 	$$
 	|(f,\partial_x(g\bar{h}))|\leq C \|f\|_{Y_{a,-1}}\|g\|_{X_{b,1}}\|h\|_{X_{b,1}}.
 	$$
 	By the Plancherel Theorem, we will estimate
 	\begin{equation}\label{fourier_form}
 		\begin{aligned}
 			\left|\int_{\mathbb{R}^4} \xi\hat{f}(\xi,\tau)\bar{\hat{g}}(\tau-\tau_1,\xi-\xi_1)\hat{h}(-\tau_1,-\xi_1)d\xi d\xi_1d\tau d\tau_1\right|.
 		\end{aligned}
 	\end{equation}
 	Just like before, we will divide $\mathbb{R}^4$ into 
    $$
    \begin{aligned}
    	&{\bf Reigon~I.}\ \   \left\{\mathbb{R}^4: \left< \tau-\xi^3\right>\geq 	E(\xi,\tau_1,\xi_1,\tau_1)\right\}, \\
    	&{\bf Reigon~II.}\   \left\{\mathbb{R}^4: \left<-\tau_1+\xi_1^2\right>\geq 	E(\xi,\tau_1,\xi_1,\tau_1)\right\}, \\
    	&{\bf Reigon~III.} \left\{\mathbb{R}^4: \left<\tau-\tau_1+(\xi-\xi_1)^2\right>\geq 	E(\xi,\tau_1,\xi_1,\tau_1)\right\}
    \end{aligned}
    $$
    and prove a stronger result, where $ E(\xi,\tau_1,\xi_1,\tau_1)$ is the maximum module. 
    
    We first note the fact 
    \begin{equation}\label{modify_and_E}
    	\begin{aligned}
    	3E(\xi,\tau_1,\xi_1,\tau_1)\geq \left<\xi(-\xi^2-\xi+2\xi_1) \right>
    	\end{aligned}
    \end{equation}

 	{\bf Region I.}: In this case, 	by Cauchy-Schwarz inequality, we only need to prove 
 	\begin{equation}\label{needed1}
 		\begin{aligned}
 			&\sup_{\mathbb{R}^2_{\xi,\tau}} \frac{|\xi|\left<\xi\right>}{\left<\tau-\xi^3\right>^a}\Big
 			(\int_{E\leq  \left< \tau-\xi^3\right>}\left<-\tau_1+\xi_1^2\right>^{-2b}\left<\tau-\tau_1+(\xi-\xi_1)^2\right>^{-2b} \\
 			&\qquad\qquad\cdot\left<\xi_1\right>^{-2}\left< \xi-\xi_1\right>^{-2}d\tau_1 d\xi_1\Big)^{1/2}
 			<\infty.
 		\end{aligned}
 	\end{equation}
 	By Lemma \ref{lem:basic_inequality_homogeneous}, it deduces that we need to prove 
 	\begin{equation}\label{integral_of_xi}
 		\begin{aligned}
 	 \sup_{\mathbb{R}^2_{\xi,\tau}}M(\xi,\tau):=\sup_{\mathbb{R}^2_{\xi,\tau}}\frac{|\xi|\left<\xi\right>}{\left<\tau-\xi^3\right>^a}\left(\int_{ E\leq \left< \tau-\xi^3\right>} \frac{\left<\xi_1\right>^{-2}\left< \xi-\xi_1\right>^{-2}}{\left<\tau+\xi^2+2\xi\xi_1\right>^{4b-1}} d\xi_1\right)^{\frac{1}{2}}<\infty.
 		\end{aligned}
 	\end{equation}
We set $\mu=\tau+\xi^2-2\xi\xi_1$.

   {\bf Case A.}$|\xi|\leq 100$. We directly have
 	$$
 	\sup_{\mathbb{R}^2_{\xi,\tau}}M(\xi,\tau)\leq C \frac{|\xi|^{1/2}}{\left< \tau-\xi^3\right>^a}\left(\int_{|\mu|\leq 2\left<\tau-\xi^3\right>}\frac{1}{\left<
 	\mu\right>^{4b-1}} d\mu\right)^{\frac{1}{2}}<\infty.
 	$$
 	
 	{\bf Case B.I.} $|\xi|>100$. $2|\xi_1|\leq \frac{1}{2}\xi^2$ or $ 2|\xi_1|\geq \frac{3}{2}\xi^2$. By \eqref{modify_and_E}, we have
 	$
 	\left<\tau-\xi^3\right>\geq C|\xi|^3
 	$. Therefore, we have 
 	$$
 	\begin{aligned}
    \sup_{\mathbb{R}^2_{\xi,\tau}}M(\xi,\tau)\leq \left<\tau-\xi^3\right>^{\frac{1}{6}-(2b+a-1)}< \infty
 	\end{aligned}
 	$$
 	because of $a+2b>\frac{7}{6}$.      
 	
 	{\bf Case B.II.} $|\xi|>100$. $\frac{1}{2}\xi^2<2|\xi_1|< \frac{3}{2}|\xi|^2$. Because of 
 	$
 	\left<\xi_1\right>^{-2}\left< \xi-\xi_1\right>^{-2}\leq C\left<\xi\right>^{-8}
 	$, 
 	we have
 	$$
 	\begin{aligned}
 		\sup_{\mathbb{R}^2_{\xi,\tau}}M(\xi,\tau)\leq C\left<\tau-\xi^3\right>^{-(2b+a-1)}<\infty.
 	\end{aligned}
 	$$

 	{\bf Region II.} Actually, in this region we can give a stronger proof than we need. We will estimate  
 	$$
 	\begin{aligned}
 		&\sup_{\mathbb{R}^2_{\xi_1,\tau_1}}M(\xi_1,\tau_1)\\
 		:=&\sup_{\mathbb{R}^2_{\xi_1,\tau_1}}\frac{1}{\left< -\tau_1+\xi_1^2\right>^b}\left(\int_{E\leq \left<-\tau_1+\xi_1^2\right>}\frac{\xi^2d\xi}{\left<\xi^3+(\xi-\xi_1)^2-\tau_1\right>^{2a+2b-1}}\right)^{1/2}
 	\end{aligned}
 	$$
 	
 	{\bf Case A.} $|\xi|\leq 100$. We directly have
 	$
 	\sup_{\mathbb{R}^2_{\xi,\tau}}M(\xi,\tau)<\infty.
 	$
 	
 	{\bf Case B.} $|\xi|>100$.
 	We set $\mu=\xi^3+(\xi-\xi_1)^2-\tau_1$. Then, we need to estimate 
 	$$
 	\begin{aligned}
 			&\sup_{\mathbb{R}^2_{\xi_1,\tau_1}}M(\xi_1,\tau_1)\\
 		=&\sup_{\mathbb{R}^2_{\xi_1,\tau_1}}\frac{1}{\left< -\tau_1+\xi_1^2\right>^b}\left(\int_{|\mu|\leq \left<-\tau_1+\xi_1^2\right>}\frac{\xi^2d\mu}{(3\xi^2+2\xi-2\xi_1)\left<\mu\right>^{2a+2b-1}}\right)^{1/2}.
 	\end{aligned}
 	$$

 	{\bf Case B.I.} $|\xi|>100$. $ |2\xi_1|\leq\frac{5}{2}\xi^2$ or $  |2\xi_1|\geq\frac{7}{2}\xi^2$.
    We have 
 	$$
 	\sup_{\mathbb{R}^2_{\xi_1,\tau_1}}M(\xi_1,\tau_1)\leq \sup_{\mathbb{R}^2_{\xi_1,\tau_1}}\frac{C}{\left< -\tau_1+\xi_1^2\right>^b}\left(\int_{|\mu|\leq 2\left< -\tau_1+\xi_1^2\right>}\frac{d\mu}{\left<\mu\right>^{2a+2b-1}}\right)^{1/2}<\infty.
 	$$
 	
 	{\bf Case B.II.} $|\xi|>100$ and $\frac{5}{2}\xi^2<|2\xi_1|<\frac{7}{2}\xi^2$. By \eqref{modify_and_E}, we have
 	 $
 	 \left< -\tau_1+\xi_1^2 \right>\geq C\left< \xi^3\right>\geq C|\xi|^3.
 	 $ Therefore, we have 
 	$$
 	\begin{aligned}
 		&\sup_{\mathbb{R}^2_{\xi_1,\tau_1}}M(\xi_1,\tau_1)\\
 		\leq& \frac{1}{\left<-\tau_1+\xi^2_1\right>^{b-1/3}}\left(\int_{E\leq \left<-\tau_1+\xi_1^2\right>}\frac{d\xi}{\left<\xi^3+(\xi-\xi_1)^2-\tau_1\right>^{2a+2b-1}}\right)^{1/2}
 		.
 	\end{aligned}
 	$$
 	
 	Let us further divide this case into following subcases.   
 	
 	{\bf Case B.II.1} $|\xi|>100$, $\frac{5}{2}\xi^2<|2\xi_1|<\frac{7}{2}\xi^2$ and $ |3\xi^2+2\xi-2\xi_1|\geq 1$.  Because of $ a+2b>4/3$, we have 
 	$$
 	\begin{aligned}
 		&\sup_{\mathbb{R}^2_{\xi_1,\tau_1}}M(\xi_1,\tau_1)\\
\leq& \frac{1}{\left<-\tau_1+\xi^2_1\right>^{b-1/3}}\left(\int_{|\mu|\leq \left<-\tau_1+\xi_1^2\right>}\frac{d\mu}{\left<\mu\right>^{2a+2b-1}}\right)^{1/2}<\infty.
 	\end{aligned}
 	$$

 	{\bf Case B.II.2}  $|\xi|>100$, $\frac{5}{2}\xi^2<|2\xi_1|<\frac{7}{2}\xi^2$ and $ |3\xi^2+2\xi-2\xi_1|< 1$. Thus, since the integral interval of $\xi$ is finite and $b>1/3$, we have 
 	$$
 	\begin{aligned}
 			&\sup_{\mathbb{R}^2_{\xi_1,\tau_1}}M(\xi_1,\tau_1)<\infty.
 	\end{aligned}
 	$$
 	
    {\bf Region III.} By the change of variable, this can be proved like region II.
    
    Now we consider the proof of \eqref{SS_to_K_2}. By duilty, for the case $|\xi|\geq1$, the proof is easier. Since in the proof of \eqref{SS_to_K_1}, the $|\xi|$ is controlled by $E(\tau,\tau_1,\xi,\xi_1)$ and we have not used the smallness of $|\xi|$, the proof of case $|\xi|<1$ is also clear. 
 \end{proof}

For any $T>0$, all the bilinear estimates and the trilinear estimate in the whole space can be restricted to $[0,T]$.
For example, we have
$$
\|\partial_x(g\bar{h})\|_{Y^T_{-a,1}}\leq C \| g\|_{X^T_{b,1}}\| h\|_{X^T_{b,1}}.
$$
This is because
$$
\begin{aligned}
	\|\partial_x(g\bar{h})\|_{Y^T_{-a,1}}\leq& \|\chi_{[0,T]}\partial_x(g\bar{h})\|_{Y_{-a,1}}=\|\partial_x(\chi_{[0,T]}g\chi_{[0,T]}\bar{h})\|_{Y_{-a,1}}\\
	\leq&C \| \chi_{[0,T]}(t)g\|_{X_{b,1}}\|\chi_{[0,T]}(t) h\|_{X_{b,1}}\\
	\leq&C(b) \|g\|_{X^T_{b,1}}\|h\|_{X^T_{b,1}},
\end{aligned}
$$
for any proper $a,b\in(0,1/2)$. Here, we have used the Lemma 2.1 in \cite{kdvmuti} to illustrate the last inequality.

Now, we derive the estimates of stochastic integral terms. According to the proof of Proposition 2.5 in \cite{kdvmuti}, we know the following Lemma. 

\begin{lemma}\label{lem:SI_1}
	Let $b\in[0,\frac{1}{2}) $, $T>0$, $l\in\mathbb{N}^+,\ k_1\in H_x^1$, $k_2\in H_x^1\cap L_x^1$, $u\in L^{2l}\left(\Omega; X^T_{b,1}\right)$ and $ v\in L^{2l}\left(\Omega; \tilde{Y}^T_{b,1}\right)$. Then, we have 
	\begin{equation}\label{SI_1}
		\begin{aligned}
			&\mathbb{E}\left\|\int_{0}^{t} S(t-r)\left(F(u)^{\alpha}\Phi dW^{(1)}_r\right)\right\|^{2l}_{X^{T\wedge\tau}_{b,1}}
			\leq C\left\| k_1\right\|_{H_x^1}^{2l}\mathbb{E}\|u^{\alpha}\|^{2l}_{X^{T\wedge\tau}_{0,1}},
		\end{aligned}
	\end{equation}
	\begin{equation}\label{SI_2}
	\begin{aligned}
		&\mathbb{E} \left\|\int_{0}^{t} U(t-r)\left(v^{\alpha}\Psi dW^{(2)}_r\right)\right\|^{2l}_{\tilde{Y}^{T\wedge\tau}_{b,1}}
		\leq C\left\| k_2\right\|_{H_x^1\cap L_x^1}^{2l}\mathbb{E}\|v^{\alpha}\|^{2l}_{Y^{T\wedge\tau}_{0,1}}
	\end{aligned}
\end{equation}
and 
\begin{equation}\label{stochastic_H_x^1H_x^{-3/8}}
	\begin{aligned}
		&\mathbb{E}  \left\|\int_{0}^{t} U(t-r)\left(v\Psi dW^{(2)}_r\right)\right\|_{L^\infty_{T\wedge \tau}H^1_x\cap\dot{H}^{-3/8}_x}^{2l}\\
		\leq &C\|k_2\|^{2l}_{H_x^1\cap L_x^1}T^l\mathbb{E} \|v\|_{L^\infty_{T\wedge \tau}H^1_x}^{2l},
	\end{aligned}
\end{equation}
for any stopping time $\tau$.
\end{lemma}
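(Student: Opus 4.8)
The plan is to reproduce the argument of Proposition 2.5 in \cite{kdvmuti}, built from three steps, and I will indicate where $b<1/2$ and the hypotheses on $k_1,k_2$ are used. \emph{Step 1 --- reduction to a Hilbert-space martingale.} Fix the stopping time $\tau$ and set $\sigma_1(r)=\mathbf 1_{\{r\le\tau\}}F(u(r))^\alpha\Phi$, a predictable process with values in $\mathrm{HS}(L^2_x;H^1_x)$, and similarly $\sigma_2(r)=\mathbf 1_{\{r\le\tau\}}v(r)^\alpha\Psi$. Since $S(-t)$ is unitary on $H^1_x$, one has $S(-t)\int_0^t S(t-r)\sigma_1(r)\,dW^{(1)}_r=\int_0^t S(-r)\sigma_1(r)\,dW^{(1)}_r=:M^{(1)}_t$, and from $\|g\|_{X_{b,1}}=\|S(-\cdot)g\|_{H^b_tH^1_x}$ together with the comparability of $\|\cdot\|_{X^{T\wedge\tau}_{b,1}}$ with $\|\chi_{[0,T\wedge\tau]}\cdot\|_{X_{b,1}}$ for $b\in[0,1/2)$ (Lemma 2.1 in \cite{kdvmuti}) the estimate \eqref{SI_1} reduces to a bound on $\mathbb E\|M^{(1)}\|^{2l}_{H^b_t([0,T\wedge\tau];H^1_x)}$. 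The same reduction with $U(t)$ instead of $S(t)$ and the weights $\langle\xi\rangle^2$, $|\xi|^{-3/4}\langle\xi\rangle^2$ gives \eqref{SI_2}; for \eqref{stochastic_H_x^1H_x^{-3/8}} no fractional time regularity is involved --- one directly controls the convolution in $L^\infty_t(H^1_x\cap\dot H^{-3/8}_x)$ --- and here one uses that $U(t)$ is unitary on $H^1_x$ and on $\dot H^{-3/8}_x$.

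\emph{Step 2 --- time regularity of the stochastic convolution.} To estimate $\mathbb E\|M^{(1)}\|^{2l}_{H^b_tH^1_x}$ I would use the factorization method: for $\alpha_0\in(b,1/2)$ --- such an $\alpha_0$ exists exactly because $b<1/2$ --- write
\begin{equation*}
	M^{(1)}_t=c_{\alpha_0}\int_0^t(t-s)^{\alpha_0-1}Y_s\,ds,\qquad Y_s=\int_0^s(s-r)^{-\alpha_0}S(-r)\sigma_1(r)\,dW^{(1)}_r,
\end{equation*}
the inner integral being meaningful since $2\alpha_0<1$. The Burkholder--Davis--Gundy inequality in $H^1_x$ controls the moments of $Y_s$ by powers of $\int_0^s(s-r)^{-2\alpha_0}\|\sigma_1(r)\|^2_{\mathrm{HS}(L^2_x;H^1_x)}\,dr$, and then the boundedness of the Riemann--Liouville operator $g\mapsto\int_0^{\cdot}(\cdot-s)^{\alpha_0-1}g(s)\,ds$ into $H^b_t$ (it factors as $I^b\circ I^{\alpha_0-b}$, with $I^{\alpha_0-b}$ bounded on $L^2_t$ and $I^b\colon L^2_t\to H^b_t$), together with Fubini, returns the desired control of $\mathbb E\|M^{(1)}\|^{2l}_{H^b_tH^1_x}$ by $\mathbb E\|u^\alpha\|^{2l}_{X^{T\wedge\tau}_{0,1}}$ up to the factor $\|k_1\|^{2l}_{H^1_x}$ supplied in Step 3; the same for $\sigma_2$. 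For \eqref{stochastic_H_x^1H_x^{-3/8}} the factorization is replaced by Doob's maximal inequality plus Burkholder--Davis--Gundy, giving a bound by $C\,\mathbb E\bigl(\int_0^{T\wedge\tau}\|v(r)\Psi\|^2_{\mathrm{HS}(L^2_x;H^1_x\cap\dot H^{-3/8}_x)}\,dr\bigr)^l\le CT^l\,\mathbb E\bigl(\sup_{r\le T\wedge\tau}\|v(r)\Psi\|^2_{\mathrm{HS}(L^2_x;H^1_x\cap\dot H^{-3/8}_x)}\bigr)^l$, the $T^l$ coming from $\int_0^{T\wedge\tau}(\cdots)\,dr\le T\,\sup_{r\le T\wedge\tau}(\cdots)$.

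\emph{Step 3 --- the Hilbert--Schmidt norms.} It remains to prove, pointwise in $r$, $\|\sigma_1(r)\|_{\mathrm{HS}(L^2_x;H^1_x)}\le C\|k_1\|_{H^1_x}\|F(u(r))^\alpha\|_{H^1_x}$ (equal to $\|u^\alpha(r)\|_{H^1_x}$ for $F(u)=u$, and an analogous degree-$\alpha$ quantity for the other choices of $F$) and $\|v(r)^\alpha\Psi\|_{\mathrm{HS}(L^2_x;H^1_x\cap\dot H^{-3/8}_x)}\le C\|k_2\|_{H^1_x\cap L^1_x}\|v^\alpha(r)\|_{H^1_x}$. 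The elementary fact behind this is $\sum_k|(k_j\ast e_k)(x)|^2=\|k_j(x-\cdot)\|^2_{L^2_x}=\|k_j\|^2_{L^2_x}$ for every fixed $x$; expanding $J^1$ by the Leibniz rule then yields $\sum_k\|F(u)^\alpha\Phi e_k\|^2_{H^1_x}\lesssim\|k_1\|^2_{H^1_x}\bigl(\|F(u)^\alpha\|^2_{L^2_x}+\|\partial_x(F(u)^\alpha)\|^2_{L^2_x}\bigr)$, which is precisely the replacement of the summability conditions recorded after Lemma 2.6 in \cite{kdvmuti}, and similarly for $v^\alpha\Psi$ into $H^1_x$. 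For the homogeneous piece $\dot H^{-3/8}_x$ I would split $\int|\xi|^{-3/4}|\widehat{v\Psi e_k}(\xi)|^2\,d\xi$ at $|\xi|=1$: for $|\xi|\ge1$ it is $\le\|v\Psi e_k\|^2_{L^2_x}$, whose sum over $k$ equals $\|k_2\|^2_{L^2_x}\|v\|^2_{L^2_x}$ by the identity above; for $|\xi|<1$ one writes $\widehat{v\Psi e_k}=\hat v\ast(\widehat{k_2}\,\widehat{e_k})$ and uses Parseval in $k$ (an \emph{equality}), $\sum_k\bigl|(\hat v\ast(\widehat{k_2}\,\widehat{e_k}))(\xi)\bigr|^2=\|\hat v(\xi-\cdot)\,\widehat{k_2}\|^2_{L^2}\le\|\widehat{k_2}\|^2_{L^\infty}\|v\|^2_{L^2_x}\le\|k_2\|^2_{L^1_x}\|v\|^2_{L^2_x}$, uniformly in $\xi$, so that $\sum_k\int_{|\xi|<1}|\xi|^{-3/4}|\widehat{v\Psi e_k}|^2\,d\xi\le\|k_2\|^2_{L^1_x}\|v\|^2_{L^2_x}\int_{|\xi|<1}|\xi|^{-3/4}\,d\xi<\infty$. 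Inserting these pointwise bounds into Steps 2 and 1 closes \eqref{SI_1}--\eqref{stochastic_H_x^1H_x^{-3/8}}.

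The two non-routine points, which I expect to be the main obstacles, are: in Step 3, pushing the \emph{homogeneous} low-frequency weight $|\xi|^{-3/4}$ through multiplication by $v$ composed with $\Psi$ --- an operator that is not Hilbert--Schmidt on $L^2_x$ by itself --- which is what forces the hypothesis $k_2\in L^1_x$ and is handled by the Fourier-side Parseval identity above together with $\|\widehat{k_2}\|_{L^\infty}\le\|k_2\|_{L^1_x}$; and in Step 2, the fact that a stochastic convolution gains fractional time regularity only below the Hölder threshold $b=1/2$, so that the combination of Burkholder--Davis--Gundy with the fractional integration has to be carried out carefully to land the estimate in $X^{T\wedge\tau}_{b,1}$ with the right-hand side in the stated form.
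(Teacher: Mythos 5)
Your proposal is correct in substance and follows essentially the route the paper itself relies on: the paper gives no argument beyond citing Propositions 2.5 and 2.7 of \cite{kdvmuti} (plus the remark in Section 2 that the Hilbert--Schmidt summability conditions may be replaced by conditions on $k_1,k_2$), and your Steps 1 and 3 reconstruct exactly those ingredients --- in particular the Parseval computation on the Fourier side with $\|\widehat{k_2}\|_{L^\infty}\leq\|k_2\|_{L^1_x}$ is precisely where $k_2\in L^1_x$ enters for the $|\xi|^{-3/4}$ low-frequency weight, and your Doob-plus-BDG treatment of \eqref{stochastic_H_x^1H_x^{-3/8}} is fine. One point in Step 2 must be tightened: if you apply BDG to $Y_s$ pointwise in $s$ and only then integrate in $s$, Minkowski's inequality produces a bound by $\big(\int_0^T\big(\mathbb{E}\|\sigma_1(r)\|^{2l}_{\mathrm{HS}}\big)^{1/l}dr\big)^{l}$, which dominates (and hence does not imply) the stated right-hand side $\mathbb{E}\big(\int_0^T\|\sigma_1(r)\|^{2}_{\mathrm{HS}}dr\big)^{l}$, i.e.\ $\|k_1\|^{2l}_{H^1_x}\mathbb{E}\|u^\alpha\|^{2l}_{X^{T\wedge\tau}_{0,1}}$ up to a constant. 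The standard repair is to regard $Y=(Y_s)_{s\leq T}$ as a single $L^2(0,T;H^1_x)$-valued stochastic integral with the deterministic kernel $\mathbf{1}_{\{r<s\}}(s-r)^{-\alpha_0}$; BDG in that Hilbert space then gives $\mathbb{E}\|Y\|^{2l}_{L^2_sH^1_x}\leq C\,\mathbb{E}\big(\int_0^T\|\sigma_1(r)\|^2_{\mathrm{HS}}dr\big)^{l}$ directly, since the $s$-integral of the squared kernel is bounded by $C_T$, after which your Riemann--Liouville step closes \eqref{SI_1}, and the same remark applies to \eqref{SI_2}. With that adjustment the proposal matches the argument delegated to \cite{kdvmuti}.
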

The proof of this lemma can be deduced directly from Proposition 2.5 and Proposition 2.7 in \cite{kdvmuti}. Furthermore, we should  estimate $\|u^{\alpha}\|^{2l}_{X^T_{0,1}} $ and $ \|v^{\alpha}\|^{2l}_{Y^T_{0,1}}$, for cases $\alpha\geq2$. 

\begin{lemma}\label{lem:strichartz_to_bourgain_2}
	For any $\alpha\in\mathbb{N^+}$,  we have 
	\begin{equation}\label{nonlinear_noise_esti_X_2}
			\|u^\alpha\|_{X^T_{0,1}}\leq C(\alpha) \|u\|^\alpha_{X^T_{b,1}}
	\end{equation}
	and
	\begin{equation}\label{nonlinear_noise_esti_Y_2}
	\|v^\alpha\|_{Y^T_{0,1}}\leq C(\alpha) \|v\|^\alpha_{Y^T_{b,1}},
	\end{equation}
where
	$$
b\in\left(b_\alpha,\frac{1}{2}\right):=\left(\left(\frac{1}{2}-\frac{1}{4(\alpha-1)}\right)\vee\frac{3}{8},\frac{1}{2}\right).
$$
\end{lemma}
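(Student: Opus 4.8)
The plan is to observe that, since the exponent $b=0$ removes the dispersive weight, $X^T_{0,1}$ and $Y^T_{0,1}$ are exactly the restriction norms of $L^2_tH^1_x$; thus \eqref{nonlinear_noise_esti_X_2} and \eqref{nonlinear_noise_esti_Y_2} amount to the purely analytic bounds $\|u^\alpha\|_{L^2([0,T];H^1_x)}\le C(\alpha)\|u\|^\alpha_{X^T_{b,1}}$ and $\|v^\alpha\|_{L^2([0,T];H^1_x)}\le C(\alpha)\|v\|^\alpha_{Y^T_{b,1}}$, so the equations themselves never enter. The case $\alpha=1$ is trivial, and for $\alpha\ge2$ the two inequalities are proved in the same way — the dispersion enters only through an $L^4_{t,x}$ Strichartz-type bound that both groups satisfy — so I shall describe only the Schr\"odinger one.

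First I would pass to a near-optimal whole-line extension $\tilde u$ of $u$ from $[0,T]$ with $\|\tilde u\|_{X_{b,1}}\le 2\|u\|_{X^T_{b,1}}$ and work with $\tilde u$ on $\mathbb{R}^2$, which makes the restriction norm disappear. Since $\langle\xi\rangle^2\sim 1+|\xi|^2$ it suffices to bound $\|\tilde u^\alpha\|_{L^2_{t,x}}$ and $\|\partial_x(\tilde u^\alpha)\|_{L^2_{t,x}}=\alpha\|\tilde u^{\alpha-1}\partial_x\tilde u\|_{L^2_{t,x}}$. For each of these I would apply H\"older in $(t,x)$ with $\tfrac12=\tfrac14+\tfrac14$, placing the undifferentiated block $\tilde u^{\alpha-1}$ in $L^4_{t,x}$ and the remaining factor ($\tilde u$ or $\partial_x\tilde u$) in $L^4_{t,x}$. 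Then $\|\partial_x\tilde u\|_{L^4_{t,x}}\le C\|\partial_x\tilde u\|_{X_{b,0}}\le C\|\tilde u\|_{X_{b,1}}$ (and likewise $\|\tilde u\|_{L^4_{t,x}}\le C\|\tilde u\|_{X_{b,1}}$) for $b>\tfrac38$, which is the one-dimensional Strichartz estimate $\|e^{it\partial_{xx}}f\|_{L^4_{t,x}}\lesssim\|f\|_{L^2_x}$ transferred to the Bourgain space by the mechanism (``Lemma 2.9 in \cite{Tao2006}'') already used in the proof of Lemma \ref{lem:trilinear_estimate}. For the other factor, $\|\tilde u^{\alpha-1}\|_{L^4_{t,x}}=\|\tilde u\|^{\alpha-1}_{L^{4(\alpha-1)}_{t,x}}$, and since $4(\alpha-1)\ge2$ the one-dimensional Gagliardo--Nirenberg inequality gives $\|\tilde u(\cdot,t)\|_{L^{4(\alpha-1)}_x}\le C\|\tilde u(\cdot,t)\|_{H^1_x}$, after which the Sobolev embedding in the time variable $H^b_t\hookrightarrow L^{4(\alpha-1)}_t$ — valid with a constant independent of $T$, being a translation-invariant estimate on $\mathbb{R}$ — yields $\|\tilde u\|_{L^{4(\alpha-1)}_tH^1_x}\le C\|\tilde u\|_{X_{b,1}}$ provided $\tfrac1{4(\alpha-1)}\ge\tfrac12-b$, i.e. $b\ge\tfrac12-\tfrac1{4(\alpha-1)}$. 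The hypothesis $b\in(b_\alpha,\tfrac12)$ with $b_\alpha=(\tfrac12-\tfrac1{4(\alpha-1)})\vee\tfrac38$ makes both requirements available, so collecting the estimates gives $\|\tilde u^\alpha\|_{L^2_tH^1_x}\le C(\alpha)\|\tilde u\|^\alpha_{X_{b,1}}\le C(\alpha)\|u\|^\alpha_{X^T_{b,1}}$, which is \eqref{nonlinear_noise_esti_X_2}. For \eqref{nonlinear_noise_esti_Y_2} one repeats the argument verbatim, using the Airy-group version $\|u\|_{L^4_{t,x}}\le C\|u\|_{Y_{b,0}}$ (valid for $b>\tfrac13$, so $b>\tfrac38$ again suffices) in place of the Schr\"odinger Strichartz bound.

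The main obstacle is the time-regularity step: with $b<\tfrac12$ one has no $L^\infty_tH^1_x$ control, only $L^p_tH^1_x$ for $\tfrac1p\ge\tfrac12-b$, and the argument survives exactly when the $\alpha-1$ undifferentiated factors can be split so that each one lands in $L^{4(\alpha-1)}_t$ — this is precisely the origin of the threshold $\tfrac12-\tfrac1{4(\alpha-1)}$, while the $\tfrac38$ is forced by the $L^4_{t,x}$ Strichartz step. A secondary point needing attention is that no power of $T$ is created: the time-Sobolev constant is $T$-independent, and the passage to the restriction norm costs only the harmless factor $2^\alpha$ from the near-optimal extension, so the final constant depends only on $\alpha$ (and on $b$ through the Strichartz and Sobolev constants). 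Beyond these two points, everything is routine one-dimensional product estimation together with H\"older's inequality.
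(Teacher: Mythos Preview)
Your proof is correct and follows essentially the same skeleton as the paper's: both identify $X^T_{0,1}=L^2_TH^1_x$, split $u^\alpha$ via the product rule, apply H\"older with the same exponents $q=4$ and $r=4(\alpha-1)$, and control the $L^4_{t,x}$ factor through the Strichartz--Bourgain embedding (whence the threshold $3/8$). The only genuine difference is in how the $L^{4(\alpha-1)}_{t,x}$ norm of the undifferentiated block is handled: the paper invokes the refined Strichartz estimates of \cite{chen} to obtain $\|u\|_{L^{4(\alpha-1)}_{T,x}}\lesssim\|u\|_{X^T_{b,1}}$ directly, while you instead use the chain $L^{4(\alpha-1)}_x\hookleftarrow H^1_x$ (Gagliardo--Nirenberg) followed by $L^{4(\alpha-1)}_tH^1_x\hookleftarrow X_{b,1}$ (time Sobolev via $\|u\|_{X_{b,1}}=\|S(-t)u\|_{H^b_tH^1_x}$ and unitarity of $S(-t)$ on $H^1_x$). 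Your route is more elementary and self-contained, avoiding the external reference, and it makes transparent why the threshold $\tfrac12-\tfrac1{4(\alpha-1)}$ arises; the paper's route would in principle allow sharper thresholds in lower spatial regularity, but at $s=1$ both give the same answer.
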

\begin{proof}
	We prove the cases $\alpha\geq 2$. 
	
	Under the refined Strichartz estimates in \cite{chen}, we can get \eqref{nonlinear_noise_esti_X_2} and \eqref{nonlinear_noise_esti_Y_2} by the following manner: 
	
From the definition, we have
$$
\begin{aligned}
	&\|u^\alpha\|_{X^T_{0,1}}=\|u^\alpha\|_{L_T^2H_x^1}\\
	\leq& \|u^\alpha\|_{L^2_{T,x}} + \|u^{\alpha-1}\partial_{x}u\|_{L^2_{T,x}}\\
	\leq& \|u\|^\alpha_{L_{T,x}^{2\alpha}}+\|u\|^{\alpha-1}_{L_{T,x}^r}\|\partial_xu\|_{L_{T,x}^q}\\
	\leq &C(\alpha)\left(\|u\|^\alpha_{X^T_{b_1,1}}+\|u\|^{\alpha-1}_{X_{b_1,1}^T}\|\partial_x u\|_{X^T_{b_1,0}}\right)\\
	\leq&C(\alpha)\|u\|^\alpha_{X^T_{b_1,1}}.
\end{aligned}
$$
where, according to the result in \cite{chen}, we can choose $r=4(\alpha-1)$, $q=4$ and 
$$
b\in\left(\left(\frac{1}{2}-\frac{1}{4(\alpha-1)}\right)\vee\frac{3}{8},\frac{1}{2}\right).
$$
The fact $\|v^\alpha\|_{Y^T_{0,1}}\leq C(\alpha) \|v\|^\alpha_{Y^T_{b,1}}$ can get similarly.
\end{proof}

\section{The Local Solution}\label{sec:local-wellposedness}
In this section, for any $T_0>0$, we prove the global well-posedness of \eqref{truncated_eq}, which is equal to \eqref{mild_without_linear} in $\left[0,\sigma^{(1)}_R\wedge\sigma^{(2)}_R\wedge T_0\right]$ almost surely.

According to the proof of Lemma 2.2 in \cite{kdvmuti}, we can prove the following lemma of $\tilde{u}_R$ and $\tilde{v}_R$. 
\begin{lemma}\label{lem:truncated_equation}
For any $b\in(0,1/2)$, $R>0$, $u_R^{(i)}\in X_{b,1}^T $ and $v_R^{(i)}\in \tilde{Y}_{b,1}^T,\ i=1,2$, we have
$$
\left\|\tilde{u}^{(i)}_R(x,t)\right\|_{X^T_{b,1}}\leq C R,\ \left\|\tilde{v}^{(i)}_R(x,t)\right\|_{\tilde{Y}^T_{b,1}}\leq C R,
$$
$$
\left\|\tilde{u}^{(1)}_R(x,t)-\tilde{u}^{(2)}_R(x,t)\right\|_{X^T_{b,1}}\leq C \left\|u^{(1)}_R(x,t)-u^{(2)}_R(x,t)\right\|_{X^T_{b,1}} 
$$
and 
$$
\left\|\tilde{v}^{(1)}_R(x,t)-\tilde{v}^{(2)}_R(x,t)\right\|_{\tilde{Y}^T_{b,1}}\leq C \left\|v^{(1)}_R(x,t)-v^{(2)}_R(x,t)\right\|_{\tilde{Y}^T_{b,1}},
$$
where $C$ is independent to $R$.
\end{lemma}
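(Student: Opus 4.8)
The plan is to reduce everything to the three defining properties of the cut-off multiplier $\theta_R$: it is supported in $[-2,2]$, it equals $1$ on $[-1,1]$, and it is Lipschitz (uniformly, since $\theta\in C_c^\infty$). Write $\Theta_1(t)=\theta_R(\|u_R\|_{X^t_{b,1}})$ and $\Theta_2(t)=\theta_R(\|v_R\|_{\tilde Y^t_{b,1}})$. The key structural observation, taken from the proof of Lemma 2.2 in \cite{kdvmuti}, is that $t\mapsto\|u_R\|_{X^t_{b,1}}$ and $t\mapsto\|v_R\|_{\tilde Y^t_{b,1}}$ are nondecreasing, so $\Theta_1,\Theta_2$ are "stopping-time-like" scalar functions: each is supported (as soon as it becomes nonconstant) on an interval and vanishes once the norm exceeds $2R$. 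First I would record the elementary one-variable fact that if $\phi$ is nondecreasing, $\phi(0)=0$, and $\Theta(t)=\theta_R(\phi(t))$, then $\|\Theta\,f\|_{X^T_{b,1}}\lesssim \sup_{t\le T}\big(\Theta(t)\,\|f\|_{X^t_{b,1}}\big)$ up to the usual $X^T_{b,1}\leftrightarrow \chi_{[0,T]}X_{b,1}$ equivalence (Lemma 2.1 of \cite{kdvmuti}); this is where the multiplier estimate for $\theta_R$ in the $X_{b,1}$-norm with $b<1/2$ enters, exactly as in \cite{kdvmuti}.

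For the first two bounds I would argue pointwise in $t$: on the support of $\Theta_1(t)$ we have $\phi(t)=\|u_R\|_{X^t_{b,1}}\le 2R$, hence $\Theta_1(t)\|u_R\|_{X^t_{b,1}}\le 2R$ for every $t$, and therefore $\|\tilde u_R\|_{X^T_{b,1}}=\|\Theta_1 u_R\|_{X^T_{b,1}}\le CR$; the same for $\tilde v_R$ with $\tilde Y^T_{b,1}$ in place of $X^T_{b,1}$. Note the constant $C$ depends only on $b$ (through the norm-equivalence and the multiplier bound for $\theta$), not on $R$, as claimed.

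For the two Lipschitz (difference) estimates I would split
$$
\tilde u^{(1)}_R-\tilde u^{(2)}_R=\Theta_1^{(1)}\big(u^{(1)}_R-u^{(2)}_R\big)+\big(\Theta_1^{(1)}-\Theta_1^{(2)}\big)u^{(2)}_R .
$$
The first term is handled by the boundedness $\|\Theta_1^{(1)}\|\le 1$ together with the multiplier estimate, giving $\le C\|u^{(1)}_R-u^{(2)}_R\|_{X^T_{b,1}}$. The second term is the crux: one needs $\big|\Theta_1^{(1)}(t)-\Theta_1^{(2)}(t)\big|\le \frac{C}{R}\,\big|\,\|u^{(1)}_R\|_{X^t_{b,1}}-\|u^{(2)}_R\|_{X^t_{b,1}}\big|\le \frac{C}{R}\,\|u^{(1)}_R-u^{(2)}_R\|_{X^t_{b,1}}$ from the $1/R$-Lipschitz constant of $\theta_R$ and the reverse triangle inequality, and then — because both $\Theta_1^{(1)}-\Theta_1^{(2)}$ is supported where at least one norm is $\le 2R$, so $\|u^{(2)}_R\|$ is $\le 2R$ there — one absorbs the factor $\|u^{(2)}_R\|\le 2R$ against the $1/R$ to get a constant independent of $R$. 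The honest technical point, and the main obstacle, is that $\Theta_1^{(1)}-\Theta_1^{(2)}$ is only a function of $t$ and we must control the time-localized product $(\Theta_1^{(1)}-\Theta_1^{(2)})u^{(2)}_R$ in $X^T_{b,1}$; this again goes through the $X^t_{b,1}$-monotone structure and the Lemma 2.1/Lemma 2.2 machinery of \cite{kdvmuti}, exactly as in the scalar KdV case treated there, so I would invoke that argument rather than redo it. The estimates for $\tilde v_R$ in $\tilde Y^T_{b,1}$ are identical word for word, using that $\tilde Y_{b,1}=Y_{b,1}\cap Y_{b,1,-3/8}$ inherits all the same restriction and multiplier properties.
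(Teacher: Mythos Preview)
Your proposal is correct and follows essentially the same route as the paper: both reduce the claim to Lemma~2.2 of \cite{kdvmuti}. The only cosmetic difference is that the paper makes the reduction literally one line by observing $\|v\|_{\tilde Y^T_{b,1}}=\|J_x^1 v\|_{\tilde Y^T_{b,0}}$ (and likewise $\|u\|_{X^T_{b,1}}=\|J_x^1 u\|_{X^T_{b,0}}$), so that the $s=1$ statement becomes verbatim the $s=0$ statement already proved in \cite{kdvmuti}; you instead sketch the internal mechanism of that lemma (support of $\theta_R$, the $1/R$-Lipschitz splitting for the difference), which is fine but strictly more than the paper writes out.
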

\begin{proof}
	We only prove $\left\|\tilde{v}^{(1)}_R(x,t)\right\|_{\tilde{Y}^T_{b,1}}\leq C R$ as an example.
	
	This is because 
   $$
   \begin{aligned}
   	\|\tilde{v}^{(1)}_R(x,t)\|_{\tilde{Y}^T_{b,1}}=&\|\theta_R(\|v^{(1)}_R\|_{\tilde{Y}^t_{b,1}})v^{(1)}_R(x,t)\|_{\tilde{Y}^T_{b,1}}\\
   	=&\|\theta_R(\|J_x^1v^{(1)}_R\|_{\tilde{Y}^t_{b,0}})J_x^1v^{(1)}_R(x,t)\|_{\tilde{Y}^T_{b,0}}\\
   	\leq&CR,
   \end{aligned}
   $$
   where the last inequality comes from Lemma 2.2 in \cite{kdvmuti}. 
\end{proof}

According to the Lemma 2.9 in \cite{kdvmuti} or Lemma 3.11 and Lemma 3.12 in \cite{erdogan_tzirakis_2016}, we know that
\begin{equation}\label{restricted_norm_geq_1/2}
	\begin{aligned}
		&\left\|\int_{0}^{t} S(t-s) f  ds\right\|_{X^T_{b,1}}\leq C T^{1-(a+b)}\|f\|_{X^T_{-a,1}},\\
		&	\left\|\int_{0}^{t} U(t-s) g  ds\right\|_{\tilde{Y}^T_{b,1}}\leq C T^{1-(a+b)}\|g\|_{\tilde{Y}^T_{-a,1}},
	\end{aligned}
\end{equation}
for any $ T\in [0,1]$, $a,b\in(0,1)$ and $a+b<1$. \eqref{restricted_norm_geq_1/2} can be proved by the restricted norm method provided in \cite{erdogan_tzirakis_2016}.

Now, we can propose the proof of Theorem \ref{thm:local_well-posedness}.

\begin{proof}[{\bf Proof of Theorem \ref{thm:local_well-posedness}:}]
	We first prove $(u_R,v_R)\in L^{2l}\left(\Omega; X^T_{b,1}\times \tilde{Y}^T_{b,1}\right)$ through a fixed point argument. For $q=1,2$, we set
	$$
	\begin{aligned}
		&\mathscr{T}^{(1)}_R u^{(q)}_R(t)=S(t)u_0-i\int_{0}^{t} S(t-r)\left(\tilde{u}^{(q)}_R\left(\tilde{v}^{(q)}_R+U(r)w_0\right)+\left|\tilde{u}^{(q)}_R\right|^2\tilde{u}^{(q)}_R\right)dr\\
		&\qquad \qquad \qquad +\int_{0}^{t} S(t-r)\left(F\left(\tilde{u}^{(q)}_R\right)\right)^{\alpha}\Phi dW_r^{(1)},\\
		&\mathscr{T}^{(2)}_R v^{(q)}_R(t)=\int_{0}^{t} U(t-r)\partial_x\left(\left|\tilde{u}^{(q)}_R\right|^2-\frac{1}{2}\left(\tilde{v}^{(q)}_R+U(r)w_0\right)^2\right)dr\\
		&\qquad \qquad \qquad +\int_{0}^{t}U(t-r) \left(v^{(q)}_R+U(r)w_0\right)^{\alpha}\Psi dW_r^{(2)}.
	\end{aligned}
	$$
	For the contractility of $\mathscr{T}_R$ in $L^{2l}\left(\Omega; X^T_{b,1}\times \tilde{Y}^T_{b,1}\right)$, we deal with all the terms of $\mathscr{T}^{(1)}_R $ and the coupling term of $ \mathscr{T}^{(2)}_R$, while the proof of rest terms can be found in \cite{kdvmuti}.
	It follows from  Lemma \ref{lem:KS_to_S}-\ref{lem:trilinear_estimate}, Lemma \ref{lem:truncated_equation} and \eqref{restricted_norm_geq_1/2} that
	\begin{equation*}\label{contractility_SK_to_S_cubic}
		\begin{aligned}
		&\Big\|\int_{0}^{t} S(t-r)\Big(\tilde{u}^{(1)}_R\tilde{v}^{(1)}_R+\left|\tilde{u}^{(1)}_R\right|^2\tilde{u}^{(1)}_R-\tilde{u}^{(2)}_R\tilde{v}^{(2)}_R-\left|\tilde{u}^{(2)}_R\right|^2\tilde{u}^{(2)}_R\\
		&\qquad\qquad\qquad+\left( \tilde{u}^{(1)}_R-\tilde{u}^{(2)}_R \right)U(r)w_0\Big)dr\Big\|_{X^T_{b,1}}\\
		\leq&CT^{1-a-b}\Big( \left\|\tilde{u}_R^{(1)}-\tilde{u}_R^{(2)}\right\|_{X^T_{b,1}}\left\|\tilde{v}^{(1)}_R\right\|_{Y^T_{b,1}}+ \left\|\tilde{u}^{(2)}_R\right\|_{X^T_{b,1}}\left\| \tilde{v}_R^{(1)}-\tilde{v}_R^{(2)}\right\|_{Y^T_{b,1}}\\
		&\quad +\left\|\tilde{u}_R^{(1)}-\tilde{u}_R^{(2)}\right\|_{X^T_{b,1}}\left\|\tilde{u}^{(1)}_R\right\|^2_{X^T_{b,1}}+\left\|\tilde{u}_R^{(1)}-\tilde{u}_R^{(2)}\right\|_{X^T_{b,1}}\left\|\tilde{u}^{(2)}_R\right\|^2_{X^T_{b,1}}\\
		&\quad+\left\|\tilde{u}_R^{(1)}-\tilde{u}_R^{(2)}\right\|_{X^T_{b,1}}\left\|\tilde{u}^{(1)}_R\right\|_{X^T_{b,1}}\left\|\tilde{u}^{(2)}_R\right\|_{X^T_{b,1}}\\
		&\quad+C(T_0)\|w_0\|_{H_x^1}\left\|\tilde{u}_R^{(1)}-\tilde{u}_R^{(2)}\right\|_{X^T_{b,1}}\Big)\\
		\leq&CT^{1-a-b}\bigg(\left(C(T_0)\|w_0\|_{H_x^1}+R+R^2\right)\left\|\tilde{u}_R^{(1)}-\tilde{u}_R^{(2)}\right\|_{X^T_{b,1}}\\
		&\qquad+R \left\| v_R^{(1)}-v_R^{(2)}\right\|_{Y^T_{b,1}}\bigg),
		\end{aligned}
	\end{equation*}
for any $a\in(3/8,1/2)$, $b\in(b_\alpha,1/2)$. 

By Lemma \ref{lem:SI_1} and Lemma \ref{lem:truncated_equation}, we have 
\begin{equation*}
	\begin{aligned}
	&\left\|\int_{0}^{t} S(t-r)\left(F\left(\tilde{u}^{(1)}_R\right)^{\alpha}-F\left(\tilde{u}^{(2)}_R\right)^{\alpha}\right)\Phi dW_r^{(1)} \right\|_{L^{2l}_\omega X^T_{b,1}}\\
	\leq&C\mathbb{E}\left(\left\|\left(\tilde{u}^{(1)}_R\right)^{\alpha}-\left(\tilde{u}^{(2)}_R\right)^{\alpha}\right\|^{2l}_{X^T_{0,1}}\right)^{1/2l}\\
	\leq&C(\|k_1\|_{H_x^1}) T^{(b-b_{\alpha})/2}R^{\alpha-1}\left\|u^{(1)}_R-u^{(2)}_R\right\|_{L^{2l}_\omega X^T_{b,1}}
	\end{aligned}
\end{equation*}
and
\begin{equation*}
	\begin{aligned}
    &\left\|\int_{0}^{t}U(t-r) \left(\left(\tilde{v}^{(1)}_R+U(r)w_0\right)^{\alpha}-\left(\tilde{v}^{(2)}_R+U(r)w_0\right)^{\alpha}\right)\Psi dW_r^{(2)}\right\|_{L^{2l}_\omega \tilde{Y}^T_{b,1}}\\
	\leq&C(\|k_2\|_{H_x^1\cap L_x^1})T^{(b-b_{\alpha})/2}(R+C(T)\|w_0\|_{H_x^1})^{\alpha-1}\left\|v^{(1)}_R-v^{(2)}_R\right\|_{L^{2l}_\omega Y^T_{b,1}},
\end{aligned}
\end{equation*}
for any $b\in(b_\alpha,1/2)$. As for the coupling term in the KdV-type equation, Lemma \ref{lem:SS_to_K}, Lemma \ref{lem:truncated_equation} and \eqref{restricted_norm_geq_1/2} imply that
\begin{equation*}
	\begin{aligned}
		&\left\|\int_{0}^{t} U(t-r)\partial_x\left(\left|\tilde{u}^{(1)}_R\right|^2-\left|\tilde{u}^{(2)}_R\right|^2\right)dr\right\|_{\tilde{Y}_{b,1}^T}\\
		\leq&CT^{1-a-b}\left\|\partial_x\left(\left|\tilde{u}^{(1)}_R\right|^2-\left|\tilde{u}^{(2)}_R\right|^2\right) \right\|_{\tilde{Y}^T_{-a,1}}\\
		\leq&CT^{1-a-b}R\left\|u_R^{(1)}-u_R^{(2)}\right\|_{X^T_{b,1}},
	\end{aligned}
\end{equation*}
for any $a\in(3/8,1/2)$, $b\in(b_\alpha,1/2)$. Thus, by choosing $T^*=T^*(R)<1$ sufficiently small, we can get the contractility of $\mathscr{T}_R$ in $L^{2l}(\Omega; X^{T^*}_{b,1}\times \tilde{Y}^{T^*}_{b,1})$.   
Since the boundedness of $\mathscr{T}_R$ in $L^{2l}\left(\Omega; X^{T^*}_{b,1}\times \tilde{Y}^{T^*}_{b,1}\right)$ can be proved similarly, we can get the well-posedness of \eqref{truncated_eq} in $[0,T^*]$.

Then, we need to prove 
$$(u_R,v_R)\in L^{2l}\left(\Omega;C\left([0,T^*];H^1_x\times \left(H^1_x\cap \dot{H}_x^{-3/8}\right)\right)\right).
$$
For the nonlinear terms, by \eqref{restricted_norm_geq_1/2} we have
\begin{equation*}
	\begin{aligned}
		&\left\|\int_{0}^{t} S(-r)\left(\tilde{u}_R\left(\tilde{v}_R+U(r)w_0\right)+|\tilde{u}_R|^2\tilde{u}_R\right)dr\right\|_{L_{T^*}^\infty H_x^1}\\
		\leq&\left\|\int_{0}^{t} S(t-r)\left(\tilde{u}_R\left(\tilde{v}_R+U(r)w_0\right)+|\tilde{u}_R|^2\tilde{u}_R\right)dr\right\|_{X_{b',1}^{T^*}}\\
		\leq& C (T^*)^{1-(a+b')}\left\|\tilde{u}_R\left(\tilde{v}_R+U(r)w_0\right)+|\tilde{u}_R|^2\tilde{u}_R\right\|_{X^{T^*}_{-a,1}}\leq C(T^*,R)
	\end{aligned}
\end{equation*}
and
\begin{equation*}
	\begin{aligned}
		&\left\|\int_{0}^{t} U(-r)\partial_x\left(|\tilde{u}_R|^2-\frac{1}{2}\left(\tilde{v}_R+U(r)w_0\right)^2\right)dr\right\|_{L_{T^{*}}^\infty H_x^1\cap\dot{H}^{-3/8}_x}\\
		\leq &\left\|\int_{0}^{t} U(t-r)\partial_x\left(|\tilde{u}_R|^2-\frac{1}{2}\left(\tilde{v}_R+U(r)w_0\right)^2\right)dr\right\|_{\tilde{Y}_{b',1}^{T^*}}\\
		\leq &C (T^*)^{1-(a+b')}\left\|\partial_x\left(|\tilde{u}_R|^2-\frac{1}{2}\left(\tilde{v}_R+U(r)w_0\right)^2\right)\right\|_{\tilde{Y}^{T^*}_{-a,1}}\leq C(T^*,R,\|w_0\|_{H_x^1}),
	\end{aligned}
\end{equation*}
for any $b'>1/2$ satisfying $a+b'<1$. Here, we have used the important inequality
$$
\left\|\partial_x(f\cdot g)\right\|_{\tilde{Y}^{T}_{-a,1}} \leq C\min\{ \|f\|_{Y^T_{b',1}},\|f\|_{\tilde{Y}_{b,1}^T}\}\cdot \min\{\|g\|_{Y^T_{b',1}},\|g\|_{\tilde{Y}_{b,1}^T}\}
$$
in \cite{de1999white} to deal with $\left\|\partial_x(\tilde{v}_R\cdot U(r)w_0)\right\|_{\tilde{Y}^{T^*}_{-a,1}} $.

For the estimate of stochastic integrals, by the BDG inequality and Lemma 2.6 in \cite{kdvmuti}, we have 
\begin{equation*}
	\begin{aligned}
		&\left\|\int_{0}^{t} S(t-r)F(\tilde{u}_R)^{\alpha}\Phi dW_r^{(1)}\right\|^{2l}_{L_\omega^{2l}L_{T^*}^\infty H_x^1}\\
		\leq &C\|k_1\|^{2l}_{H_x^1}\mathbb{E}\left(\int_{0}^{T^*} \|\tilde{u}_R^{\alpha}\|^2_{H_x^1} dr\right)^l\\
		\leq &C(T^*)\|k_1\|^{2l}_{H_x^1}\mathbb{E}\left(\|\tilde{u}_R\|^{2\alpha}_{X^T_{b,1}}\right)^l\\
		\leq&C(T^*)\|k_1\|^{2l}_{H_x^1}R^{2 \alpha l}
	\end{aligned}
\end{equation*}
and 
\begin{equation*}
	\begin{aligned}
		&\left\|\int_{0}^{t} U(t-r)(\tilde{v}_R+U(r)w_0)^{\alpha}\Psi dW_r^{(2)}\right\|^{2l}_{L_\omega^{2l}L_{T^*}^\infty H_x^1\cap\dot{H}_x^{-3/8}}\\
		\leq &C(T^*)\left(\|k_2\|_{H_x^1}+\|k_2\|_{L_x^1}\right)^{2l}\left(R^{2l\alpha}+\|w_0\|^{2l\alpha}_{H_x^1}\right),
	\end{aligned}
\end{equation*}
for any $b\in(b_\alpha,1/2)$.

Since $S(t),U(t)$ are continue operators in $H_x^1\cap\dot{H}^{-3/8}_x$, we have proved  $(u_R,v_R)\in L^{2l}\left(\Omega;C\left([0,T^*];H^1_x\times \left(H^1_x\cap \dot{H}_x^{-3/8}\right)\right)\right)
$. 

Thus, we can get the global well-posedness of \eqref{truncated_eq} in $[0,T_0]$ by dividing $[0,T_0]$ into finite numbers of intervals shorter than $T^*$.

Hence, the proof of $(u_R,v_R)\in L^{2l}\left(\Omega;C\left([0,T_0];H_x^1\times \left(H_x^1\cap\dot{H}_x^{-3/8}\right)\right)\right)$  and $L^{2l}\left(\Omega; X^{T_0}_{b,1}\times \tilde{Y}^{T_0}_{b,1}\right)$ are finished.
\end{proof}

\section{The Global Solution}\label{sec:global_solution}
According to the structure of conservation laws of \eqref{approxiamation_eq}, we consider the global well-posedness of \eqref{msskdv} with $\alpha=1$. For the sake of simplicity, we choose $ F(u)= u$ in this section. Moreover, when we use the workspaces $X_{b,1}^T,\ X_{-a,1}^T$ or $\tilde{Y}_{b,1}^T,\ \tilde{Y}_{-a,1}^T$, $a,b$ fit the requirements of Theorem \ref{thm:local_well-posedness}.

Let $ \sigma^*=\lim_{R\uparrow \infty} \sigma_R^{(1)}\wedge\sigma^{(2)}_R$, $\mathbb{P}\text{-a.s.}$ In section \ref{sec:local-wellposedness}, we prove the well-posedness of $(u,v)$ in $\left[0, \sigma^{(1)}_R\wedge\sigma^{(2)}_R\wedge T_0\right]$ for any $R>0$. Thus, if we can prove 
\begin{equation}\label{final_aimming_proori_estimate}
	\mathbb{E}\|(u_R,v_R)\|^{2l}_{X^{\sigma_R^{(1)}\wedge\sigma^{(2)}_R\wedge T_0}_{b,1}\times \tilde{Y}^{\sigma_R^{(1)}\wedge\sigma^{(2)}_R\wedge T_0}_{b,1}} \leq C,
\end{equation}
for a constant $C$ independent to $R$, then we can prove $ \sigma^*\wedge T_0 = T_0 $, $\mathbb{P}\text{-a.s.}$ and finish the proof of Theorem \ref{thm:main}.

Since the deterministic S-KdV has the conservation structure in $H_x^1\times H_x^1$, one may deduce \eqref{final_aimming_proori_estimate} from an $H_x^1\times H_x^1$ estimate.

When we use conservation laws of deterministic S-KdV, we need to take derivatives of conserved quantities. These calculations need $(u_R(t),v_R(t))\in H_x^2\times H_x^3$. However, since $(u_0,v_0)\in H_x^1\times H_x^1$, the regularity of $(u_R,v_R)$ is a problem. 
To overcome it, we consider following approximation equations of \eqref{truncated_eq}, which are first introduced by \cite{chenguguo}:
\begin{equation}\label{approxiamation_eq}
	\left\{
	\begin{aligned}
		&d u_{m,n,K}=i\partial_{xx}u_{m,n,K}dt -i \psi_K(|u_{m,n,K}|^2)u_{m,n,K}w_{m,n,K}dt\\
		&\qquad\qquad-i|u_{m,n,K}|^2u_{m,n,K}\varphi_K(|u_{m,n,K}|^2)+u_{m,n,K} P_m\Phi dW_t^{(1)},\\
		&d w_{m,n,K}=-\partial_{xxx}w_{m,n,K}dt+  P_n\partial_x(\varphi_K(|u_{m,n,K}|^2)|u_{m,n,K}|^2)dt\\
		&\qquad\qquad-\frac{1}{2}P_n\partial_{x}(\varphi_K(w_{m,n,K})w_{m,n,K}^2)dt\\
		&\qquad\qquad+ P_m\left(w_{m,n,K}\Psi dW_t^{(2)}\right),\\
		&u_{m,n,K}(0)=P_mu_0(x),\  w_{m,n,K}(0)=P_mw_0(x).
	\end{aligned}
	\right.
\end{equation}
Here, $n\geq m$, $\varphi\in C_0^\infty$ is a real cut-off function satisfying  $\varphi|_{[-1,1]} = 1$ and 
$$\varphi_K(x) = \varphi(x/K),\ \psi_K(x) = x\varphi_K'(x)+\varphi_K(x).$$

Because the nonlinear terms in \eqref{approxiamation_eq} are cut-off in both physical and frequency space, this kind of approximation equation can be proved that is well-posed in a high-regular space until $T_0$. 

Moreover, from \cite{chenguguo}, we know that the corresponding deterministic equation of  \eqref{approxiamation_eq} has three conservation laws:
$$
\|u_{m,n,K}\|^2_{L_x^2},\ 
I_{t}(u_{m,n,K},w_{m,n,K}): = \int_{\mathbb{R}}\text{Im}(u_{m,n,K}\partial_x\bar{u}_{m,n,K}) + \frac{1}{2}w_{m,n,K}^2 dx
$$
and 
\begin{equation*}
	\begin{aligned}
		&\quad\mathcal{E}_t(u_{m,n,K},w_{m,n,K})\\
		&:=\int_{\mathbb{R}}|\partial_x u_{m,n,K}|^2+\frac{1}{2}(|\partial_x w_{m,n,K}|^2-\psi_{2,K}(w_{m,n,K}))\\
		&\quad+\varphi_K(|u_{m,n,K}|^2)|u_{m,n,K}|^2w_{m,n,K}
		+\psi_{1,K}(|u_{m,n,K}|^2)~dx,
	\end{aligned}
\end{equation*}
where $$\psi_{1,K}(x) = \int_0^x s\varphi_K(s)~ds,\quad\psi_{2,K}(x) = \int_0^x s^2\varphi_K(s)~ds.$$

These conservation laws will help us get  $H_x^1\times H_x^1$ and the Bourgain norm priori estimate of \eqref{approxiamation_eq}.

\begin{prop}\label{prop:high_globel_well-posedness}
 For any $ T_0, R>0$, there exists a unique solution in $C([0, T_0]; H_x^2\times H_n)\ \mathbb{P}\text{-a.s.}$ for \eqref{approxiamation_eq}, where $$H_n=\{h\in L^2(\mathbb{R}):\mathrm{supp}~\mathcal{F}(h)\subset [-n,n]\}.$$
\end{prop}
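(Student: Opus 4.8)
The plan is to treat \eqref{approxiamation_eq} as a semilinear stochastic evolution equation in the Hilbert space $H^2_x\times H_n$ and run a standard Banach fixed point argument on the associated mild formulation, followed by a globalization step using the fact that the cut-offs make all nonlinearities globally Lipschitz on that space. First I would write the mild form of \eqref{approxiamation_eq}: $u_{m,n,K}(t)=S(t)P_mu_0-i\int_0^tS(t-r)N_1(u_{m,n,K},w_{m,n,K})\,dr+\int_0^tS(t-r)u_{m,n,K}P_m\Phi\,dW^{(1)}_r$ and similarly for $w_{m,n,K}$ with $U(t)$, where $N_1,N_2$ collect the (localized) nonlinear drift terms. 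The key structural observations are: (i) $S(t)$ is a unitary group on $H^2_x$ and $U(t)$ is a unitary group on $H_n$ (indeed on every Sobolev space, and it preserves the frequency-support constraint defining $H_n$, so $U(t)$ maps $H_n$ to $H_n$); (ii) because $w_{m,n,K}\in H_n$ has compactly supported frequency, all its $H^s_x$ norms are equivalent on $H_n$, so the $-\partial_{xxx}$ part is in fact a bounded perturbation there and the $w$-component is effectively an ODE in a fixed finite-band space — this removes every derivative-loss issue in the KdV equation.

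Next I would verify that each nonlinear term is locally Lipschitz (indeed globally Lipschitz with constants depending on $m,n,K$) as a map $H^2_x\times H_n\to H^2_x\times H_n$. For the drift: $\psi_K(|u|^2)uw$ and $|u|^2u\varphi_K(|u|^2)$ are smooth bounded functions of $u$ with bounded derivatives (the cut-off $\varphi_K$ and hence $\psi_K$ and all their derivatives are bounded and compactly supported), and multiplication by $w\in H_n$ followed by $P_n$ keeps us in $H_n$; since $H^2_x$ is a Banach algebra this gives the required Lipschitz bounds on $H^2_x$-balls, and the frequency truncation $P_n$ handles the $w$-equation nonlinearities $P_n\partial_x(\varphi_K(|u|^2)|u|^2)$ and $P_n\partial_x(\varphi_K(w)w^2)$, where $\partial_x P_n$ is a bounded operator on $L^2$ with norm $\le n$. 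For the diffusion terms, $u\mapsto uP_m\Phi$ and $w\mapsto wP_m\Psi$ (interpreted as Hilbert–Schmidt-operator-valued maps via the kernels $k_1,k_2$) are linear, and the hypotheses $k_1\in H^1_x$, $k_2\in H^1_x\cap L^1_x$ together with the remark in Section \ref{sec:preliminary} (that $\sum_k\|fP_m\Phi e_k\|_{H^s_x}^2<\infty$ and $\sum_k\|P_m(f\Psi e_k)\|^2_{H^s_x}<\infty$ follow from $\|k_i\|_{H^s_x}<\infty$) guarantee the needed Hilbert–Schmidt bounds on $H^2_x$ and on $H_n$; here one also uses that $P_m$ is smoothing, so $uP_m\Phi$ actually lies in $H^2_x$ even for $u$ only in $H^2_x$, and likewise the $w$-noise stays in $H_n$.

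With these ingredients, local existence and uniqueness of a mild solution in $L^{2}(\Omega;C([0,T^*];H^2_x\times H_n))$ (for small $T^*$ depending on $m,n,K,R$ but not on the data) follows from the contraction mapping principle applied to the stochastic convolution operator, using the standard maximal-inequality estimate $\mathbb{E}\sup_{t\le T}\|\int_0^tS(t-r)G(r)\,dW_r\|_{H^2_x}^{2}\lesssim T\,\mathbb{E}\sup_{r\le T}\|G(r)\|_{\mathrm{HS}}^{2}$ (BDG plus unitarity of $S$, $U$), exactly as in the cited references \cite{de1999stochastic}, \cite{kdvmuti}. Since all Lipschitz constants are global on $H^2_x\times H_n$ — there is no quadratic or cubic blow-up, the cut-offs having flattened everything — the local solution extends to $[0,T_0]$ by the usual iteration on successive intervals of length $T^*$ (or, equivalently, by an a priori bound $\mathbb{E}\sup_{t\le T}\|(u_{m,n,K},w_{m,n,K})(t)\|_{H^2_x\times H_n}^2\le C(T)$ obtained from Gronwall). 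The main obstacle, and the only place that needs genuine care rather than routine bookkeeping, is checking the diffusion-term Hilbert–Schmidt estimates on $H^2_x$ (respectively on $H_n$) uniformly enough to close the contraction: one must exploit both the smoothing from $P_m$ and the algebra property of $H^2_x$, invoking precisely the replacement of the summability conditions by $\|k_1\|_{H^1_x}<\infty$, $\|k_2\|_{H^1_x\cap L^1_x}<\infty$ recorded in Section \ref{sec:preliminary}; everything else is a direct transcription of the standard SPDE fixed-point scheme.
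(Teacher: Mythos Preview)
Your local argument is fine, but the globalization step has a genuine gap: the claim that the cut-offs $\varphi_K,\psi_K$ make the drift globally Lipschitz on $H^2_x\times H_n$ is false for the Schr\"odinger component. The cut-off $\varphi_K(|u|^2)$ bounds $|u|$ pointwise but says nothing about $|\partial_x u|$ or $|\partial_x^2 u|$. Writing $F(z)=|z|^2z\,\varphi_K(|z|^2)$ (a smooth compactly supported function on $\mathbb{C}$), one has
\[
\partial_x^2\big(F\circ u\big)=D^2F(u)[\partial_x u,\partial_x u]+DF(u)\,\partial_x^2 u,
\]
and the first term contributes $\|\partial_x u\|_{L^4}^2\sim \|u\|_{H^2_x}^2$ to the $H^2_x$ norm --- quadratic, not linear, in $\|u\|_{H^2_x}$. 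The coupling term $\psi_K(|u|^2)uw$ behaves the same way. Consequently the Lipschitz constant on $H^2_x$ grows with $\|u\|_{H^2_x}$, the interval length $T^*$ depends on the size of the data, and neither the iteration on intervals of fixed length nor a direct $H^2$--Gronwall closes.

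The paper circumvents this by not working in $H^2_x$ at the fixed-point stage. It first runs the contraction in the weaker space $\big(C_TL^2_x\cap L^6_{x,T}\big)\times\big(C_TH_n\cap L^8_{x,T}\big)$, where the pointwise cut-offs genuinely yield at-most-linear growth (for instance $\|P_n\partial_x(\varphi_K(w)w^2)\|_{L^2_x}\le nCK\|w\|_{L^2_x}$ and $\|\varphi_K(|u|^2)|u|^2\|_{L^2_x}\le \sqrt{2K}\,\|u\|_{L^2_x}$). After an additional localization by a parameter $\lambda$, an $L^2_x\times L^2_x$ a priori bound is obtained via It\^o's formula and Gronwall --- this works precisely because the $L^2$-level estimates are linear --- which forces the localization stopping times $\tau_\lambda$ to reach $T_0$ almost surely. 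The $H^2_x$ regularity of $u$ is then recovered a posteriori by a separate bootstrap using Strichartz, Leibniz-type and Gagliardo--Nirenberg inequalities (equation~\eqref{H_x^3_regulity_of _u_lambda} in the paper), where one is allowed to lose powers of $\lambda$, $m$, $K$ because those are already controlled. To repair your argument you would need to insert exactly this kind of $L^2$-level a priori mechanism; the cut-offs alone do not flatten the $H^2_x$ picture.
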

\begin{proof}
	For the sake of simplicity, we use $u, w$ to replace $ u_{m,n,K}, w_{m,n,K}$ in this proof.
	Let 
	$$\|u\|_{\mathscr{X}_T}:=\|u\|_{C([0,T];L_x^2)}+\|u\|_{L_{x,T}^6},\quad \|w\|_{\mathscr{Y}_{T}}:=\|w\|_{C([0,T];H_n)}+\|w\|_{{L_{x,T}^8}}.$$
	We also introduce the following notations and definitions
	$$
	\breve{u}_\lambda:=\theta_\lambda(\|u_\lambda\|_{\mathscr{X}_t})u_\lambda,\ \breve{w}_\lambda:=\theta_\lambda(\|w_\lambda\|_{\mathscr{Y}_{t}})w_\lambda,
	$$
	$$
	\tau_\lambda^{(1)}:=\inf\{t\geq 0: \|u_\lambda\|_{\mathscr{X}_t} \geq \lambda\},\
	\tau_\lambda^{(2)}:=\inf\{t\geq 0: \|w_\lambda\|_{\mathscr{Y}_{t}} \geq \lambda\}
	$$
	$$
	\tau_\lambda:=\tau_\lambda^{(1)}\wedge\tau_\lambda^{(2)},\ \forall \lambda \geq 0
	$$
	and consider the localized equation:
	\begin{equation}\label{approxiamation_eq_localized}
		\left\{
		\begin{array}{rcl}
			d u_\lambda&=&i\partial_{xx}u_{\lambda}dt -i \psi_K\left(|\breve{u}_{\lambda}|^2\right)\breve{u}_{\lambda}\breve{w}_{\lambda}dt -i|\breve{u}_{\lambda}|^2\breve{u}_{\lambda}\varphi_K\left(|\breve{u}_{\lambda}|^2\right)dt\\ 
			&&+u_{\lambda}P_m\Phi dW_t^{(1)},\\
			d w_{\lambda}&=&-\partial_{xxx}w_\lambda dt+  P_n\partial_x\left(\varphi_K(|\breve{u}_{\lambda}|^2)|\breve{u}_{\lambda}|^2\right)dt\\
			&&-\frac{1}{2}P_n\partial_{x}\left(\varphi_K(\breve{w}_{\lambda})\breve{w}_{\lambda}^2\right)dt+ P_m\left(w_{\lambda}\Psi dW_t^{(2)}\right),\\
			u_\lambda(0)&=&P_mu_0(x),\  w_\lambda(0)=P_mw_0(x).
		\end{array}
		\right.
	\end{equation}

It is not difficult to prove that 
$$
\|\breve{u}_{\lambda}\|_{\mathscr{X}_T}\leq C\cdot \lambda,\  \|\breve{w}_{\lambda}\|_{\mathscr{Y}_T}\leq C\cdot \lambda
$$
and 
$$
\|\breve{u}^{(1)}_{\lambda}-\breve{u}^{(2)}_{\lambda}\|_{\mathscr{X}_T}\leq C\|u^{(1)}_{\lambda}-u^{(2)}_{\lambda}\|_{\mathscr{X}_T},\  \|\breve{w}^{(1)}_{\lambda}-\breve{w}^{(2)}_{\lambda}\|_{\mathscr{Y}_T}\leq C\|w^{(1)}_{\lambda}-w^{(2)}_{\lambda}\|_{\mathscr{Y}_T}.
$$
Therefore, by the proof of Proposition 4.1 in \cite{chenguguo} and Corollary 3.1 in \cite{de1999stochastic}, to do a fixed point argument in $ L^8\left(\Omega;\mathscr{X}_{ T}\times\mathscr{Y}_{T}\right),\ \forall T>0$, we have
\begin{equation}\label{contractility_approximation_1}
	\begin{aligned}
		&\left\|u^{(1)}_{\lambda}-u^{(2)}_{\lambda}\right\|^8_{L^8\left(\Omega;\mathscr{X}_{T}\right)}\\
		\leq &C_1(T)\left(\lambda^8+\lambda^{24}\right)\left\|u^{(1)}_{\lambda}-u^{(2)}_{\lambda}\right\|^8_{L^8\left(\Omega;\mathscr{X}_{T}\right)} +C_1(T) \lambda^8\left\|w^{(1)}_{\lambda}-w^{(2)}_{\lambda}\right\|^8_{L^8\left(\Omega;\mathscr{Y}_{T}\right)} \\
		&+C_1(T)\|k_1\|^8_{H_x^1}\left\|u^{(1)}_{\lambda}-u^{(2)}_{\lambda}\right\|^8_{L^8\left(\Omega;\mathscr{X}_{T}\right)}\\
	\end{aligned}
\end{equation}
and
\begin{equation}\label{contractility_approximation_2}
	\begin{aligned}
		&\left\|w^{(1)}_{\lambda}-w^{(2)}_{\lambda}\right\|^8_{L^8\left(\Omega;\mathscr{Y}_{T}\right)}\\
		\leq &C_2(T)n^8(\lambda^8+\lambda^{16})\left\|u^{(1)}_{\lambda}-u^{(2)}_{\lambda}\right\|^8_{L^8\left(\Omega;\mathscr{X}_{T}\right)} \\
		&+C_2(T)n^8(\lambda^8+\lambda^{16})\left\|w^{(1)}_{\lambda}-w^{(2)}_{\lambda}\right\|^8_{L^8\left(\Omega;\mathscr{Y}_{T}\right)} \\
		&+C_2(T)\|k_2\|^8_{H_x^1}\left\|w^{(1)}_{\lambda}-w^{(2)}_{\lambda}\right\|^8_{L^8\left(\Omega;\mathscr{Y}_{T}\right)}.
	\end{aligned}
\end{equation}
Here, $C_1(T),\ C_2(T)$ will decrease to $0$ as $T\downarrow0$. We also used the fact  $\|P_mf\|_{L_x^p}\leq \|f\|_{L_x^p},\ \forall p>1$, (see for example \cite{classical_fourier_analysis}) in the proof of \eqref{contractility_approximation_1} and \eqref{contractility_approximation_2}. 

Therefore, for any fixed $ \lambda>0$, if we choose $T>0$ sufficiently small, we can get a local solution in 
$$ L^8\left(\Omega;\mathscr{X}_{ T}\times\mathscr{Y}_{T}\right).$$

Furthermore, if we divide $[0,T_0]$ into finite intervals, we can get the global well-posedness of \eqref{approxiamation_eq_localized} in 
$$ L^8\left(\Omega;\mathscr{X}_{ T_0}\times\mathscr{Y}_{T_0}\right).$$

According to the Strichartz estimate, the Leibniz-type estimate in \cite{benea2016multiple}, the G-N inequality and $\text{supp}\  \hat{w}_\lambda $ is compact, for any $ T,\varepsilon>0$, we can improve the regularity of $u_\lambda$  by the following inequality:
\begin{equation}\label{H_x^3_regulity_of _u_lambda}
	\begin{aligned}
		&\|u_\lambda\|^8_{L_\omega^8 L^\infty_{T}H_x^2}\\
		\leq& \left<m\right>^{16}\|u_0\|^8_{L_x^2} +\left\|  J^2\left[\left(\varphi'_K\left(|\breve{u}_{\lambda}|^2\right)|\breve{u}_{\lambda}|^2+\varphi_K\left(|\breve{u}_{\lambda}|^2\right)\right)\breve{u}_{\lambda}\breve{w}_{\lambda}\right]\right\|^8_{L_\omega^8L_{x,T}^{6/5}} \\
		&+\left\|  J^2\left[\varphi_K\left(|\breve{u}_{\lambda}|^2\right)|\breve{u}_{\lambda}|^2\breve{u}_{\lambda}\right]\right\|^8_{L_\omega^8L_{x,T}^{6/5}}\\
		&+C\left<m\right>^{16}\|k_1\|_{H_x^1}^8\mathbb{E}\left(\int_{0}^{T}\|u_\lambda\|^2_{H_x^2}ds\right)^4\\
		\leq&\left<m\right>^{16}\|u_0\|^8_{L_x^2}+ C(T)\left<m\right>^{16}\|k_1\|_{H_x^1}^8\mathbb{E}\int_{0}^{T}\|u_\lambda\|^8_{H_x^2}ds\\
		&+C(T)\left(K^8\lambda^{16}+K^8\lambda^8+K^{16}\lambda^8+\lambda^8+\lambda^8\left<m\right>^{16}\right)\|u_\lambda\|^8_{L_\omega^8 L^\infty_{T}H_x^2}\\
		&+ C(T)(mK\lambda)^{16}+C(T)C(\varepsilon)K^{216}\lambda^{72}+\varepsilon\|u_\lambda\|^8_{L_\omega^8 L^\infty_{T}H_x^2}.
	\end{aligned}
\end{equation}

Therefore, if we choose $T,\varepsilon$ sufficiently small and divide $[0,T_0]$ into finite numbers of small intervals, we will have 
\begin{equation}\label{H_x^3_well-posedness}
\|u_\lambda\|^8_{L_\omega^8 L^\infty_{T_0}H_x^2}\leq C\left(m,T_0,K,\lambda,\|k_1\|_{H_x^1},\|u_0\|_{L_x^2}\right),
\end{equation}
for any $\lambda>0$.

Let $\tau^*:=\lim_{\lambda\uparrow\infty}\tau_{\lambda}\ \mathbb{P}\text{-a.s.}$
To prove $  T_0\wedge \tau^*= T_0,\ \mathbb{P}\text{-a.s.}$, we need a priori estimate. For any $\lambda>0$, let 
$$
\begin{aligned}
H_\lambda(t)=\|u_\lambda(t)\|^8_{L_x^2}+\|w_\lambda(t)\|^8_{L_x^2}.
\end{aligned}
$$
Similar to \eqref{H_x^3_regulity_of _u_lambda}, we have
\begin{equation*}
	\begin{aligned}
			&\mathbb{E}\|H_\lambda(t)\|_{L^\infty_{ T_0\wedge\tau_{\lambda}}}\\
			\leq&H_\lambda(0)+  C(T_0)\cdot\left(\|k_1\|_{H_x^1}^8+C(n,K)
			\right)\mathbb{E}\int_{0}^{T_0\wedge\tau_{\lambda}} \|u_\lambda\|^8_{L_x^2} dt\\
			&+C(T_0)\cdot \left(\|k_2\|_{H_x^1}^8+C(n,K)\right) \mathbb{E}\int_{0}^{ T_0\wedge\tau_{\lambda}} \|w_\lambda\|^8_{L_x^2} dt\\
			\leq& H_\lambda(0)+ C(K,n,\|k_1\|_{H_x^1},\|k_2\|_{H_x^1})\int_{0}^{T_0}\mathbb{E}\|H_\lambda(s)\|_{L^\infty_{ t\wedge\tau_{\lambda}}}dt.
		\end{aligned}
\end{equation*}
Therefore, by the Gronwall inequality, we have 
\begin{equation}\label{L_x^2_priori_estimate}
	\begin{aligned}
			&\mathbb{E}\|H_\lambda(t)\|_{L^\infty_{ T_0\wedge\tau_{\lambda}}}\\
			\leq& C(K,n,\|k_1\|_{H_x^1},\|k_2\|_{H_x^1},\|u_0\|_{L_x^2},\|w_0\|_{L_x^2})e^{C(K,n,\|k_1\|_{H_x^1},\|k_2\|_{H_x^1})T_0}.
		\end{aligned}
\end{equation}
Let $Q_\lambda(t):= \|u_\lambda\|^8_{L^6_{ t\wedge\tau_{\lambda}}L_x^6}+\|w_\lambda\|^8_{L^8_{ t\wedge\tau_{\lambda}}L_x^8}$ . The Strichartz estimate, Lemma 3.1, Corollary 3.1  in \cite{de1999stochastic} and the H\"{o}lder inequality imply
$$
\begin{aligned}
	&\mathbb{E}Q_\lambda(T_0)\\
	\leq& C\|u_0\|^8_{L_x^2}+C\|w_0\|^8_{L_x^2} +C(n,K,T_0)\mathbb{E}\int_{0}^{ T_0\wedge\tau_{\lambda}}\|u_{\lambda}\|^8_{L_x^2}+\|w_{\lambda}\|^8_{L_x^2} ds\\
	&+CT_0^{1/3}\|k_1\|^8_{H_x^1}\int_{0}^{T_0}\mathbb{E}\left(\int_{0}^{ T_0\wedge\tau_{\lambda}}|t-s|^{-2/3}\|u_{\lambda}\|^2_{L_x^2} ds\right)^4dt\\
	&+C\|k_2\|^8_{H_x^1}\int_{0}^{T_0}\mathbb{E}\left(\int_{0}^{ T_0\wedge\tau_{\lambda}}|t-s|^{-1/2}\|w_{\lambda}\|^2_{L_x^2} ds\right)^4dt\\
	\leq
	&C(n,K,\|k_1\|_{H_x^1},\|k_2\|_{H_x^1},T_0)\mathbb{E}\int_{0}^{ T_0\wedge\tau_{\lambda}}\|u_{\lambda}\|^8_{L_x^2}+\|w_{\lambda}\|^8_{L_x^2} ds,
\end{aligned}
$$ 
which means
\begin{equation}\label{L_x,t^6}
	\mathbb{E}Q_\lambda(T_0)\leq C(K,n,\|k_1\|_{H_x^1},\|k_2\|_{H_x^1},\|u_0\|_{L_x^2},\|w_0\|_{L_x^2},T_0).
\end{equation}

Now, combing all the inequalities above, we can finish this proof. 
Since \eqref{L_x^2_priori_estimate} and \eqref{L_x,t^6} are independent to $\lambda$, we have $  T_0\wedge \tau^*=  T_0,\ \mathbb{P}\text{-a.s.}$

Moreover, because of $ (u(t),w(t))=(u_\lambda(t),w_\lambda(t)),\ t\in\left[0, \tau_\lambda\right],\ \mathbb{P}\text{-a.s.}$,
we have
$$
(u,w)\in 
C\left(\left[0,  T_0\right]; H_x^2\times H_n\right),\ \mathbb{P}\text{-a.s.}
$$

\end{proof}

The following remark is about the reason why our approximation equations \eqref{approxiamation_eq} are necessary.
\begin{rem}\label{rem:irreplaceability}
	In general, to use the conservation laws of \eqref{truncated_eq}, one may firstly smooth the noises and the initial datum to get the $H_x^2\times H_x^3$ local solution. 
	Then, by some stopping time skills, one can get a solution in $C([0, \sigma^{(1)}_R\wedge\sigma^{(2)}_R\wedge T_0\wedge \tilde{\tau}^*];H_x^2\times H_x^3)\ \mathbb{P}\text{-a.s.}$, where $\lim_{t\uparrow\tilde{\tau}^*}\|(u_R,v_R)\|_{H_x^2\times H_x^3}=\infty$. 
	However, although we can get a priori estimate of $(u_R,v_R)$ in $L^{2l}(\Omega;C([0,\sigma^{(1)}_R\wedge\sigma^{(2)}_R\wedge T_0\wedge \tilde{\tau}^*];H_x^1\times H_x^1))$ by conservation laws, it is still difficult to illustrate $\sigma^{(1)}_R\wedge\sigma^{(2)}_R\wedge T_0\wedge \tilde{\tau}^*=\sigma^{(1)}_R\wedge\sigma^{(2)}_R\wedge T_0 \ \mathbb{P}\text{-a.s.}$, since we do not have $\|u_R\|_{L_x^2L_T^\infty},\ \|v_R\|_{L_x^2L_T^\infty}$ to deal with terms like
	$$
	\begin{aligned}
	\left\|\int_{0}^{t}  U(t-s)\partial_{x} \left(|u_R|^2\right) ds\right\|_{L_T^\infty H_x^2}\leq 2\|u_R\|_{L_x^2L_T^\infty}\left\|J^2u_R\right\|_{L_T^2L_x^2}.
	\end{aligned}
	$$ 
	In another words, the inequality 
	\begin{equation}\label{appendix_inequality}
		\|\cdot\|_{L_x^2L_T^\infty }\leq \|\cdot\|_{L_T^\infty H_x^1}
	\end{equation}
	is not true. Intuitively speaking, this fact will let us can not do the $H_x^1\times H_x^1$ priori estimate in the whole $[0,\sigma^{(1)}_R\wedge\sigma^{(2)}_R\wedge T_0] $ interval.
	For the proof of the falseness of \eqref{appendix_inequality}, we put it in the appendix.
	
	Moreover, when we prove the global well-posedness of the approximation equations in $[0,T_0]$, we only use the Strichartz estimate, which means these approximation equations
	can help us avoid proving some complicated multilinear estimates and localized Bourgain norm estimates.
\end{rem}

\begin{prop}\label{prop:uniform_esti} 
We still use $(u,w)$ to represent $ (u_{m,n,K},w_{m,n,K})$. Suppose that $(u_0,w_0)\in  H_x^1\times H_x^1$ and $k_1,k_2\in H_x^1$, then we have
\begin{equation}\label{priori_esti}
 \mathbb{E}\left(\|(u,w)\|^{2l}_{L_{T_0}^\infty \mathcal{H}_x^1}\right)\leq C\left(\|u_0\|_{H_x^1}, \|w_0\|_{H_x^1},\|k_1\|_{H_x^{1}},\|k_2\|_{H_x^{1}},T_0,l\right),
\end{equation}
for any $T_0>0$, $l\in\mathbb{N^+}$.
\end{prop}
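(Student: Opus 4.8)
The plan is to carry the classical deterministic $H_x^1\times H_x^1$ energy argument for S-KdV through It\^o's formula, using the three conservation laws of the deterministic version of \eqref{approxiamation_eq} quoted above, and to keep careful track of the additional It\^o-correction and martingale contributions generated by the two multiplicative noises. By Proposition \ref{prop:high_globel_well-posedness} the solution $(u,w)=(u_{m,n,K},w_{m,n,K})$ lies in $C([0,T_0];H_x^2\times H_n)$ $\mathbb{P}$-a.s., which is more than enough regularity to apply It\^o's formula to $\|u(t)\|_{L_x^2}^2$, to $I_t(u,w)$ and to $\mathcal{E}_t(u,w)$. Since $u(0)=P_mu_0$ and $w(0)=P_mw_0$ satisfy $\|u(0)\|_{H_x^1}\le\|u_0\|_{H_x^1}$, $\|w(0)\|_{H_x^1}\le\|w_0\|_{H_x^1}$, every bound obtained below will be independent of $m,n,K$. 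I would first localize by the stopping times $\tau_N:=\inf\{t\ge0:\|u(t)\|_{H_x^1}+\|w(t)\|_{H_x^1}\ge N\}$, derive all estimates uniformly in $N$ using the Burkholder--Davis--Gundy and Gronwall inequalities, and then let $N\uparrow\infty$.

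First I would treat the mass and momentum levels. Applying It\^o to $\|u(t)\|_{L_x^2}^{2l}$, the deterministic nonlinear drifts $-i\psi_K(|u|^2)uw$ and $-i|u|^2u\varphi_K(|u|^2)$ contribute nothing because they multiply $u$ by a real-valued function, while the noise $uP_m\Phi\,dW^{(1)}$ produces a martingale together with an It\^o correction bounded by $C\|k_1\|_{H_x^1}^2\|u\|_{L_x^2}^2$; BDG and Gronwall give $\mathbb{E}\sup_{[0,T_0]}\|u\|_{L_x^2}^{2l}\le C$. Next, the deterministic flow of \eqref{approxiamation_eq} conserves $I_t(u,w)$, so It\^o's formula for $|I_t(u,w)|^{2l}$ retains only the noise terms, controlled by $C(\|k_1\|_{H_x^1}^2+\|k_2\|_{H_x^1}^2)$ times lower-order norms, and combined with the previous bound this gives $\mathbb{E}\sup_{[0,T_0]}|I_t(u,w)|^{2l}\le C$. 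Since $\bigl|\int_{\mathbb{R}}\mathrm{Im}(u\partial_x\bar u)\,dx\bigr|\le\varepsilon\|\partial_x u\|_{L_x^2}^2+C_\varepsilon\|u\|_{L_x^2}^2$, this already forces
\[
\|w(t)\|_{L_x^2}^2\le 2|I_t(u,w)|+\varepsilon\|\partial_x u(t)\|_{L_x^2}^2+C_\varepsilon\|u(t)\|_{L_x^2}^2 .
\]

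For the energy I would use that $|\varphi_K|\le1$ gives, uniformly in $K$, $0\le\psi_{1,K}(|u|^2)\le\tfrac12|u|^4$, $|\psi_{2,K}(w)|\le\tfrac13|w|^3$ and $|\varphi_K(|u|^2)\,|u|^2w|\le|u|^2|w|$, so by the Gagliardo--Nirenberg and Young inequalities the sum of the three lower-order terms of $\mathcal{E}_t$ is bounded by $\varepsilon(\|\partial_x u\|_{L_x^2}^2+\|\partial_x w\|_{L_x^2}^2)+P(\|u\|_{L_x^2},\|w\|_{L_x^2})$ for a fixed polynomial $P$. Absorbing the $\varepsilon$-terms and inserting the bound for $\|w\|_{L_x^2}^2$, this yields $\|u(t)\|_{H_x^1}^2+\|w(t)\|_{H_x^1}^2\le C\mathcal{E}_t(u,w)+C|I_t(u,w)|+\widetilde P(\|u(t)\|_{L_x^2},\|w(t)\|_{L_x^2})+C$, together with the reverse bound $\mathcal{E}_t(u,w)\le C(1+\|u(t)\|_{H_x^1}^2+\|w(t)\|_{H_x^1}^2)^{3}$, all constants independent of $m,n,K$. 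Then I would apply It\^o to $\mathcal{E}_t(u,w)$: the deterministic drift vanishes by conservation, the It\^o corrections coming from the noises acting on the quadratic parts $\|\partial_x u\|_{L_x^2}^2,\|\partial_x w\|_{L_x^2}^2$ of $\mathcal{E}_t$ are bounded, via the kernel estimates recalled in Section \ref{sec:preliminary}, by $C(\|k_1\|_{H_x^1}^2+\|k_2\|_{H_x^1}^2)(\|u\|_{H_x^1}^2+\|w\|_{H_x^1}^2)$, the corrections from the lower-order terms of $\mathcal{E}_t$ are of strictly lower order, and the martingale part has quadratic variation of the same size. Setting $M(t):=\mathcal{E}_t(u,w)+\|u(t)\|_{L_x^2}^2+\|w(t)\|_{L_x^2}^2+1$, which by the comparison above is equivalent to $1+\|(u,w)(t)\|_{\mathcal H^1}^2$ up to polynomial lower-order terms that lie in every $L^p(\Omega)$ by the first step, I would apply It\^o to $M(t\wedge\tau_N)^l$, take expectations, use BDG to absorb $\tfrac12\mathbb{E}\sup_{[0,T_0\wedge\tau_N]}M^l$, close with Gronwall to get a bound uniform in $N,m,n,K$, let $N\uparrow\infty$, and use the comparison once more to pass from $M^l$ back to $\|(u,w)\|_{\mathcal H^1}^{2l}$, which is \eqref{priori_esti}.

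The hard part will be the coercivity step combined with the noise part of the energy identity: the Gagliardo--Nirenberg absorption has to be performed with constants independent of the truncation parameter $K$ --- this is precisely why one uses $|\varphi_K|\le1$ and the elementary pointwise bounds on $\psi_{1,K},\psi_{2,K}$ rather than anything $K$-dependent --- and one must check that the second-order It\^o terms generated by the multiplicative noise at the $H_x^1$ level are at most quadratic in $\|(u,w)\|_{\mathcal H^1}$ (equivalently at most linear in $M$), so that the Gronwall argument genuinely closes; the polynomial lower-order contributions are harmless only because the $L_x^2$- and momentum-level moments have been secured beforehand.
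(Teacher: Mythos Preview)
Your overall strategy---It\^o on the three conserved quantities, coercivity via Gagliardo--Nirenberg with $K$-independent constants, then BDG/Gronwall---matches the paper's. But there is a genuine gap in the order in which you close the estimates. You claim that after the $L_x^2$ bound on $u$ alone, It\^o's formula gives $\mathbb{E}\sup_{[0,T_0]}|I_t(u,w)|^{2l}\le C$. This is not true: since $I_t$ contains $\int\mathrm{Im}(u\partial_x\bar u)\,dx$, both the martingale part and the It\^o correction arising from the noise $uP_m\Phi\,dW^{(1)}$ involve $\|\partial_x u\|_{L_x^2}$ (the quadratic variation is bounded by $C\|k_1\|_{H_x^1}^2\int_0^t\|u\|_{L_x^2}^2\|\partial_x u\|_{L_x^2}^2\,ds$, and the paper's bound \eqref{conservation-I_bound} carries exactly such a factor $\|u\|_{H_x^1}$). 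Hence the momentum cannot be closed before the energy, $\|w\|_{L_x^2}$ is \emph{not} ``secured beforehand'', and your master quantity $M(t)=\mathcal{E}_t+\|u\|_{L_x^2}^2+\|w\|_{L_x^2}^2+1$ is not coercive: the Gagliardo--Nirenberg absorption of $\|w\|_{L_x^3}^3$ leaves a remainder $-C\|w\|_{L_x^2}^{10/3}$ that $+\|w\|_{L_x^2}^2$ cannot compensate.

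The fix is exactly what the paper does: rather than bound $I_t$ in isolation, include it in the master quantity. The paper sets $Q_t=\|u\|_{L_x^2}^2+\|u\|_{L_x^2}^{10}+|I_t|+|I_t|^{5/3}+|\mathcal{E}_t|$, proves $Q_t\gtrsim\|u\|_{H_x^1}^2+\|w\|_{H_x^1}^2+\|w\|_{L_x^2}^{10/3}$ in one stroke (the exponent $5/3$ on $|I_t|$ is chosen precisely so that $|I_t|^{5/3}\gtrsim\|w\|_{L_x^2}^{10/3}$ absorbs the Gagliardo--Nirenberg remainder), and then applies It\^o to all pieces of $Q_t^l$ simultaneously; the cross-terms of the form $\|u\|_{L_x^2}\|\partial_x u\|_{L_x^2}$ that broke your separate treatment of $I_t$ are now harmlessly bounded by $Q_t$ itself and absorbed by taking the time interval short and iterating.
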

\begin{proof}
Let us consider the conservation laws of the corresponding deterministic equation of \eqref{approxiamation_eq}.

By Ito's formula, we have
\begin{equation*}
	\mathbb{E}\|u(t)\|^{2l}_{L^\infty_{T}L_x^2}
	\leq \|u(0)\|^{2l}_{L_x^2}+C(l,\|k_1\|_{H_x^1},T)\mathbb{E}\int_{0}^{T} \|u(t)\|^{2l}_{L_x^2} dt,
\end{equation*}
for any  $T>0,\ l\in \mathbb{N^+}$. Then, the Gronwall inequality implies
\begin{equation}\label{conservation_L_x^2}
\mathbb{E}\|u(t)\|^{2l}_{L^\infty_{T}L_x^2}
\leq C(l,\|k_1\|_{H_x^1}, T, \|u_0\|_{L_x^2}).
\end{equation}

For $I_{t}(u,w)$, we have
\begin{equation}\label{conservation-I}
	\begin{aligned}
	 &I_{t}(u,w)\\
	  =&I_0(u,w)+\text{Im} \int_{0}^{t}\left(u P_m\Phi dW_s^{(1)},\partial_x u\right)+\text{Im} \int_{0}^{t}\left(u,\partial_{x}\left(u P_m\Phi dW_s^{(1)}\right)\right)\\
	  &+\frac{1}{2}\sum_{k=0}^{\infty}\text{Im} \int_{0}^{t}\left(u P_m\Phi e_k,\partial_x\left(uP_m\Phi e_k\right)\right) ds\\
	  &+\int_{0}^{T_0}\left(w,P_m(w\Psi) dW_s^{(2)}\right)+\frac{1}{2}\int_{0}^{T_0}\|P_m(w\Psi)\|^2_{L_2^{0,0}}ds,
	\end{aligned}
\end{equation}
almost surely, for any $ t>0$.

It can be deduced by \eqref{conservation_L_x^2}, \eqref{conservation-I} that 
\begin{equation}\label{conservation-I_bound}
	\begin{aligned}
	&\mathbb{E}\|I_t(u,w)\|^q_{L^\infty_{T}}\\
	\leq &C(\|u_0\|_{H_x^1}, \|w_0\|_{L_x^2})+ C(q)T^{q/2}\|k_2\|^q_{L_x^2}\mathbb{E}  \| w\|^{2q}_{L^\infty_{T}L_x^2}\\
	&+C(q)T^{q/2}\|k_1\|^{q}_{L_x^2}\mathbb{E}\left(\|u\|^{q}_{L^\infty_{T}L_x^2}\|u\|^{q}_{L^\infty_{T}H_x^1}\right),
	\end{aligned}
\end{equation}
for any $q\geq1$.

Moreover, for $\mathcal{E}_{t}(u,w)$ we have

\begin{equation}\label{energy}
	\begin{aligned}
		&\mathcal{E}_{t}(u,w)\\
		=&\mathcal{E}_0(u,w)+\int_{0}^{t} \left(2\partial_x \text{Re}u,\partial_x \left(u P_m\Phi dW_s^{(1)}\right)\right) \\
		&+\sum_{k=0}^{\infty}\int_{0}^{t}  \left(\partial_{x}\left(uP_m\Phi e_k\right),\partial_{x}\left(uP_m\Phi e_k\right)\right)ds +\int_{0}^{t} \left(\partial_x w,\partial_xP_m\left(w\Psi dW_s^{(2)}\right)\right) \\
		&+\frac{1}{2}\sum_{k=0}^{\infty}\int_{0}^{t}\left(\partial_xP_m(w\Psi e_k),\partial_xP_m(w\Psi e_k)\right)ds \\
		&+\int_{0}^{t} \left(\varphi_K(|u|^2)|u|^2-\frac{1}{2} w^2\varphi_K(w),P_m\left(w\Psi dW_s^{(2)}\right)\right)\\
		&-\frac{1}{4}\sum_{k=0}^{\infty}\int_{0}^{t} \left(2w\varphi_K(w)+K^{-1}w^2\varphi_K'(w),(P_m(w\Psi e_k))^2\right) ds \\
		&+\int_{0}^{t} \left(\varphi'_K\left(|u|^2\right)|u|^2K^{-1}+\varphi_K\left(|u|^2\right), 2w\text{Re} u\cdot u P_m\Phi dW_s^{(1)}\right)\\
		&+\sum_{k=0}^\infty \int_{0}^{t} \left( \varphi_K(|u|^2),w(uP_m\Phi e_k)^2\right)ds\\
		&+\sum_{k=0}^\infty \int_{0}^{t} \left( K^{-1}\varphi'_K(|u|^2),w(|u|^2+\text{Re}(u^2))(u P_m\Phi e_k)^2\right)ds\\
		&+\sum_{k=0}^\infty \int_{0}^{t} \left( K^{-2}\varphi''_K(|u|^2),w|u|^2(|u|^2+\text{Re}(u^2))(u P_m\Phi e_k)^2\right)ds\\
		&+2\int_{0}^{t}\left( |u|^2\varphi_K\left(|u|^2\right),\text{Re}u\cdot u\cdot P_m\Phi dW_s^{(1)}\right)\\
		&+ 2\sum_{k=0}^{\infty}\int_{0}^{t} \left(\varphi_K\left(|u|^2\right)+K^{-1}|u|^2\varphi'_K\left(|u|^2\right),(\text{Re}u\cdot u P_m\Phi e_k)^2 \right)ds,
	\end{aligned}
\end{equation}
almost surely for any $t\in[0,T_0]$. 

It can be proved easily that there exists positive constants $C$ such that
\begin{equation}\label{lowerbound_of_conservation}
	\begin{aligned}
		Q_t(u,w):=&\left\|u\left(t\right)\right\|^2_{L_x^2}+\left\|u\left(t\right)\right\|^{10}_{L_x^2}\\
		&+\left|I_{t}(u,w)\right|+\left|I_{t}(u,w)\right|^{\frac{5}{3}}+\left|\mathcal{E}_{t}(u,w)\right|\\
		\geq& C\Big(\left\|u\left(t\right)\right\|^2_{H_x^1}+\left\|w\left(t\right)\right\|^2_{H_x^1}+\left\|u\left(t\right)\right\|^4_{L_x^4}+\left\|w\left(t\right)\right\|^3_{L_x^3}+ \left\| w\left(t\right)\right\|^{10/3}_{L_x^2}\Big),
	\end{aligned}
\end{equation}
for any $t\in[0,T_0]$.

For $Q_t(u,w)$, by \eqref{conservation_L_x^2}, \eqref{energy} and  \eqref{lowerbound_of_conservation},
we have 
\begin{equation*}
	\begin{aligned}
		&\mathbb{E}\left\|\left(u(t\right),w\left(t\right)\right\|^{2l}_{L^\infty_{T}\mathcal{H}_x^1}\leq \mathbb{E}\sup_{t\in[0,T]}Q^l_t(u,w)\\
		\leq& C\left(\|u_0\|_{H_x^1}, \|w_0\|_{H_x^1},\|k_1\|_{H_x^{1}},\|k_2\|_{L_x^2},T_0,l\right)\\
		&+\left(C(l)T^{l/2}\|k_2\|^l_{L_x^2}+C\right)\mathbb{E}\left(\|u\|^l_{L^\infty_{T}L_x^2}\|\partial_{x}u\|^l_{L^\infty_{T}L_x^2}\right)\\
		&+\left(C(l)T^{l/2}\|k_2\|^l_{L_x^2}+C\right)\mathbb{E}\left(\|u\|^{5l/3}_{L^\infty_{T}L_x^2}\|\partial_{x}u\|^{5l/3}_{L^\infty_{T}L_x^2}\right)\\
		&+C\left(\|k_1\|^{2l}_{H_x^{1}}+\|k_2\|^{2l}_{H_x^1}\right)T^{l-1}\mathbb{E}\int_{0}^{T}\|u\|_{H_x^1}^{2l}+\|w\|_{H_x^1}^{2l} ds\\
		&+C\|k_2\|^{l}_{L_x^{2}}\left(T^{2l-3}\mathbb{E}\int_{0}^{T} \|u\|^{2l}_{H_x^1}ds+\mathbb{E}\int_{0}^{T} \|w\|^{2l}_{L_x^2}ds\right)\\
		&+C\left(\|k_2\|^l_{L_x^2}+\|k_2\|^{2l}_{L_x^2}\right)\Bigg(T^{l/2}\mathbb{E}\|w\|^{10l/3}_{L^\infty_{T}L_x^2} +T^{l}\mathbb{E}\|w\|^{3l}_{L^\infty_{T}L_x^3}+T^l\mathbb{E}\|w\|_{L^\infty_{T_0}H_x^1}^{2l} \Bigg)\\
		&+C\|k_1\|^{2l}_{L_x^2}\left(T^{l-1}\mathbb{E}\int_{0}^{T} \|w\|^{2l}_{H_x^1}ds+  T^l\mathbb{E}\|u\|^{4l}_{L^\infty_{T}L_x^2}\right)\\
		&+C\left(\|k_1\|^{l}_{L_x^2}+\|k_1\|^{2l}_{L_x^2}\right)\Bigg(T^l\mathbb{E}\|u\|_{L_{T}^\infty L_x^2} +T^{l-1}\mathbb{E}\int_{0}^{T}\|u\|^{2l}_{H_x^1}ds\Bigg),
	\end{aligned}
\end{equation*}
for any $T\in[0,T_0]$. Thus, choosing $T>0$ sufficiently small, we will have 
\begin{equation}\label{priori_esti_Q}
\begin{aligned}
	&\mathbb{E}\sup_{t\in[0,T]}Q^l_t(u,w)\\
	\leq& C\left(\|u_0\|_{H_x^1}, \|w_0\|_{H_x^1},\|k_1\|_{H_x^{1}},\|k_2\|_{H_x^1},T_0,l\right)+\frac{1}{2}\mathbb{E}\sup_{t\in[0,T]}Q^l_t(u,w).
\end{aligned}
\end{equation}
Hence, by dividing $[0,T_0]$ into finite numbers of small intervals, we can finish the proof of \eqref{priori_esti}.
\end{proof}

Next, we deduce the priori estimate of $ (u,v)$  by  limitation.
We first consider the convergence of $ \{(u_{m,n,K},w_{m,n,K})\}_{m,n,K\in\mathbb{N^+}}$ in $ \left[0, T_0\right]$ as $K \uparrow \infty$. 

Let us set 
\begin{equation}\label{uv_m,n}
	\left\{
	\begin{aligned}
		&d u_{m,n}=i\partial_{xx}u_{m,n}dt -iu_{m,n}w_{m,n}dt-i|u_{m,n}|^2u_{m,n}dt\\
		&\qquad\qquad+u_{m,n} P_m\Phi dW_t^{(1)},\\
		&d w_{m,n}=-\partial_{xxx}w_{m,n}dt+  P_n\partial_x(|u_{m,n}|^2)dt-\frac{1}{2}P_n\partial_{x}(w_{m,n}^2)dt\\&\qquad\qquad+ P_m\left(w_{m,n}\Psi dW_t^{(2)}\right),\\
		&u_{m,n}(0)=P_mu_0,\  w_{m,n}(0)=P_mw_0.
	\end{aligned}
	\right.
\end{equation}

\begin{lemma}\label{lem:priori_estimate_of_u,v}
For any $R>0$, $ l\in\mathbb{N^+}$, we have 
\begin{equation}\label{priori_estimate_of_u,v}
\begin{aligned} &\mathbb{E}\|(u_{m,n},w_{m,n})\|^{2l}_{L_{T_0}^\infty \mathcal{H}_x^1}\\
\leq& C(\|u_0\|_{H_x^1}, \|w_0\|_{H_x^1},\|k_1\|_{H_x^{1}},\|k_2\|_{H_x^{1}},T_0,l).
\end{aligned}
\end{equation}
\end{lemma}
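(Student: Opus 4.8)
The strategy is to realise $(u_{m,n},w_{m,n})$ as the limit of $(u_{m,n,K},w_{m,n,K})$ as $K\uparrow\infty$ and to transfer the uniform bound of Proposition~\ref{prop:uniform_esti} to it by a stopping-time argument combined with Fatou's lemma. Fix $m\le n$ throughout. As a first step I would note that \eqref{uv_m,n} is \eqref{approxiamation_eq} with the amplitude cut-offs $\varphi_K,\psi_K$ formally set equal to $1$, and run the fixed-point argument of Proposition~\ref{prop:high_globel_well-posedness} for it — localising the cubic term $|u_{m,n}|^2u_{m,n}$ and the quadratic terms in time through a $\theta_\lambda$-cut-off of the $\mathscr X_t$, $\mathscr Y_t$ norms, using $\|P_mf\|_{L^p_x}\le\|f\|_{L^p_x}$, the Strichartz estimates and the stochastic estimates of Lemma~\ref{lem:SI_1} (equivalently Corollary 3.1 in \cite{de1999stochastic}). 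This gives a unique local, hence a unique maximal, solution $(u_{m,n},w_{m,n})$ on a random interval $[0,\tau^\ast_{m,n})$ with $\tau^\ast_{m,n}>0$ a.s.\ and paths in $C([0,\tau);\mathcal H^1_x)$ for every $\tau<\tau^\ast_{m,n}$.

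\emph{Identification on stopping intervals and global existence.} Let $C_S$ be a constant with $\|f\|_{L^\infty_x}\le C_S\|f\|_{H^1_x}$ on $\mathbb R$, and set
$$\rho_K:=\inf\Big\{t\ge0:\ \|u_{m,n,K}(t)\|_{H^1_x}\ge \tfrac{\sqrt{K}}{C_S}\ \ \text{or}\ \ \|w_{m,n,K}(t)\|_{H^1_x}\ge \tfrac{K}{C_S}\Big\}\wedge T_0 .$$
On $[0,\rho_K]$ one has $|u_{m,n,K}(x,t)|^2\le K$ and $|w_{m,n,K}(x,t)|\le K$ for every $x$, hence $\varphi_K(|u_{m,n,K}|^2)=\psi_K(|u_{m,n,K}|^2)=\varphi_K(w_{m,n,K})=1$ there, so the restriction of $(u_{m,n,K},w_{m,n,K})$ to $[0,\rho_K]$ solves \eqref{uv_m,n}; by pathwise uniqueness it coincides with $(u_{m,n},w_{m,n})$ on $[0,\rho_K\wedge\tau^\ast_{m,n}]$. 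Chebyshev's inequality together with the bound \eqref{priori_esti} of Proposition~\ref{prop:uniform_esti}, which is independent of $m,n,K$, then gives
$$\mathbb P\big(\rho_K<T_0\big)\le\frac{C_S^{2l}}{K^{l}}\,\mathbb E\|u_{m,n,K}\|^{2l}_{L^\infty_{T_0}H^1_x}+\frac{C_S^{2l}}{K^{2l}}\,\mathbb E\|w_{m,n,K}\|^{2l}_{L^\infty_{T_0}H^1_x}\xrightarrow[K\to\infty]{}0 .$$
On the event $\{\rho_K=T_0\}$ the pair $(u_{m,n,K},w_{m,n,K})$ solves \eqref{uv_m,n} on all of $[0,T_0]$, so $\tau^\ast_{m,n}\ge T_0$ there; since $\mathbb P\big(\bigcup_K\{\rho_K=T_0\}\big)=1$ we obtain $\tau^\ast_{m,n}\ge T_0$ a.s., i.e.\ \eqref{uv_m,n} is globally well-posed on $[0,T_0]$, and $(u_{m,n},w_{m,n})\equiv(u_{m,n,K},w_{m,n,K})$ on $\{\rho_K=T_0\}$.

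\emph{The bound.} On $\{\rho_K=T_0\}$ the two pairs coincide on $[0,T_0]$, hence
$$\mathbb E\Big(\|(u_{m,n},w_{m,n})\|^{2l}_{L^\infty_{T_0}\mathcal H^1_x}\,\mathbf 1_{\{\rho_K=T_0\}}\Big)\le\mathbb E\|(u_{m,n,K},w_{m,n,K})\|^{2l}_{L^\infty_{T_0}\mathcal H^1_x}\le C,$$
with $C$ the constant of Proposition~\ref{prop:uniform_esti} (independent of $m,n,K$). Passing to a subsequence $K_j$ along which $\mathbf 1_{\{\rho_{K_j}=T_0\}}\to1$ a.s.\ and applying Fatou's lemma gives \eqref{priori_estimate_of_u,v}. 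The main obstacle is exactly the stopping-time identification: one must verify that the amplitude truncations $\varphi_K,\psi_K$ become inactive with probability tending to $1$, which is where the one-dimensional embedding $H^1_x\hookrightarrow L^\infty_x$ together with the $K$-uniformity of Proposition~\ref{prop:uniform_esti} enter in an essential way; the local well-posedness and pathwise uniqueness of \eqref{uv_m,n}, used only to glue the truncated pieces, are routine variants of Proposition~\ref{prop:high_globel_well-posedness}.
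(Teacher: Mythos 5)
Your proposal is correct and follows essentially the same route as the paper: identify $(u_{m,n},w_{m,n})$ with $(u_{m,n,K},w_{m,n,K})$ up to a stopping time on which the amplitude cut-offs $\varphi_K,\psi_K$ are inactive, transfer the $K$-uniform bound of Proposition \ref{prop:uniform_esti}, and let $K\uparrow\infty$. Your version is in fact slightly more careful than the paper's (using the embedding $H^1_x\hookrightarrow L^\infty_x$ to define the stopping time at level $\sqrt{K}$ for $u$, plus Chebyshev and Fatou for the limit), whereas the paper states the stopping times directly in terms of the pointwise amplitude and passes to $\iota^*=T_0$ a.s.\ without spelling these details out.
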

\begin{proof}
	 For \eqref{approxiamation_eq}, we define a series of stopping times
	$$
	\iota_K=\inf\left\{t\in\left[0,T_0\right]:|u_{m,n,K}(t)|>K,\ |v_{m,n,K}(t)|>K \right\},
	$$
	for any $ K\in\mathbb{N^+}$.
	
	Because of the well-posedness proved in Proposition \ref{prop:high_globel_well-posedness}, it is clear that for any $K_1\geq K_2, K_1,K_2\in\mathbb{N^+}$, there exists a common set $\mathcal{N}$ satisfying $\mathbb{P}(\mathcal{N})=0$ such that $$u_{m,n,K_1}(t)=u_{m,n,K_2}(t),\ v_{m,n,K_1}(t)=v_{m,n,K_2}(t),\  \forall t\in [ 0,\iota_{K_2}],\ \forall \omega \in\Omega\backslash \mathcal{N}.$$
	
	Then, for any $\omega\in\Omega\backslash \mathcal{N}$, $K\in \mathbb{N^+}$, since $(u_{m,n},v_{m,n})=(u_{m,n,K},v_{m,n,K})$ in $ [0, \iota_K]$, we have
	$$
	\mathbb{E}\| (u_{m,n},w_{m,n})\|^{2l}_{L_{\iota_K}^\infty \mathcal{H}_x^1}\leq C(\|u_0\|_{H_x^1}, \|w_0\|_{H_x^1},\|k_1\|_{H_x^{1}},\|k_2\|_{H_x^{1}},T_0,l).
	$$
	
	Let $\iota^*:=\lim_{K\uparrow \infty} \iota_K\  \mathbb{P}\text{-a.s.}$ Therefore, we have
	$$
    \mathbb{E}\| (u_{m,n},w_{m,n})\|^{2l}_{L_{\iota^*}^\infty \mathcal{H}_x^1}\leq C(\|u_0\|_{H_x^1}, \|w_0\|_{H_x^1},\|k_1\|_{H_x^{1}},\|k_2\|_{H_x^{1}},T_0,l), 
    $$
    which means $\iota^*=T_0,$ $\mathbb{P}\text{-a.s.}$ Hence, we have finished the proof of \eqref{priori_estimate_of_u,v}.
\end{proof}

Let us set $v_{m,n}:=w_{m,n}-U(t)u_0$.

	According to \eqref{priori_estimate_of_u,v}, we have for any $T_0,R>0$, $ l\in\mathbb{N^+}$,
	\begin{equation}\label{priori_estimate_v_m,n}
		\begin{aligned} &\mathbb{E}\|(u_{m,n},v_{m,n})\|^{2l}_{L_{T_0}^\infty \mathcal{H}_x^1}\\
			\leq& C(\|u_0\|_{H_x^1}, \|w_0\|_{H_x^1},\|k_1\|_{H_x^{1}},\|k_2\|_{H_x^{1}},T_0,l).
		\end{aligned}
	\end{equation}

In the next lemma, we will use \eqref{priori_estimate_v_m,n} to get the Bourgain norm priori estimate of $(u_{m,n},v_{m,n})$.

\begin{lemma}\label{lem:priori_estimate_bourgain_norm_u_m,n}
	For any $T_0,R>0$, $l\in\mathbb{N^+}$, we have 
	\begin{equation}\label{m,n_priori_estimate}
		\begin{aligned}
			&\mathbb{E}\|(u_{m,n},v_{m,n})\|^{2l}_{X_{b,1}^{T_0}\times \tilde{Y}_{b,1}^{T_0}}\\
			\leq&C\left(\|u_0\|_{H_x^1}, \|w_0\|_{H_x^1},\|k_1\|_{H_x^{1}},\|k_2\|_{H_x^{1}\cap L_x^1},T_0,l\right).
		\end{aligned}
	\end{equation}
\end{lemma}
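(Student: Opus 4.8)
The plan is to promote the $\mathcal H^1_x$ a priori bound \eqref{priori_estimate_v_m,n} (which also controls $\|w_{m,n}\|_{L^\infty_{T_0}H^1_x}$, via $w_{m,n}=U(t)P_mw_0+v_{m,n}$) to a bound on the restricted Bourgain norms, by running the local estimates of Section~\ref{sec:local-wellposedness} on short time intervals whose length can be chosen uniformly along $[0,T_0]$, and then summing. Recall that $(u_{m,n},v_{m,n})$ solves the mild system obtained from \eqref{mild_without_linear} by inserting the cut-offs $P_n$ (on the nonlinearities) and $P_m$ (on the noise), which act with norm $1$ on every space below and may be ignored, and with $\alpha=1$; here $v_{m,n}$ is the purely Duhamel part of $w_{m,n}$ (the parameter $R$ plays no role). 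Because of the $\partial_x$-structure of the KdV nonlinearity together with \eqref{stochastic_H_x^1H_x^{-3/8}}, one checks that $\mathbb E\|v_{m,n}(t_0)\|_{H^1_x\cap\dot H^{-3/8}_x}^{2l}\le C$ uniformly in $t_0\in[0,T_0]$ and in $m,n$, with $C$ of the form allowed in the statement; this is exactly what is needed to restart the KdV equation at a general time $t_0$.

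Fix $t_0$ and $T\in(0,1]$ with $t_0+T\le T_0$, and restart at $t_0$: on $[t_0,t_0+T]$ write $u_{m,n}=S(\cdot-t_0)u_{m,n}(t_0)+(\mathrm{Duhamel})$ and $v_{m,n}=U(\cdot-t_0)v_{m,n}(t_0)+(\mathrm{Duhamel})$, keeping $w_{m,n}=U(\cdot)P_mw_0+v_{m,n}$. The homogeneous Schr\"odinger term satisfies $\|S(\cdot-t_0)u_{m,n}(t_0)\|_{X^{[t_0,t_0+T]}_{b,1}}\le\delta(T)\|u_{m,n}(t_0)\|_{H^1_x}$ with $\delta(T)\downarrow0$ as $T\downarrow0$ — this is where $b<1/2$ enters — while $\|U(\cdot-t_0)v_{m,n}(t_0)\|_{\tilde Y^{[t_0,t_0+T]}_{b,1}}\le C\|v_{m,n}(t_0)\|_{H^1_x\cap\dot H^{-3/8}_x}$ and $\|U(\cdot)P_mw_0\|_{Y^{[t_0,t_0+T]}_{b,1}}\le C\|w_0\|_{H^1_x}$. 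The coupling and cubic Duhamel terms are estimated by \eqref{restricted_norm_geq_1/2} together with Lemma~\ref{lem:KS_to_S}, Lemma~\ref{lem:trilinear_estimate} and Lemma~\ref{lem:SS_to_K}, and the KdV self-interaction $\partial_x(w_{m,n}^2)$ by the classical KdV bilinear estimate and, for the mixed pieces involving $U(\cdot)P_mw_0$, by the inequality from \cite{de1999white} already used in Section~\ref{sec:local-wellposedness}; all of these carry a factor $T^{1-(a+b)}$. The stochastic integrals are handled by Lemma~\ref{lem:SI_1} with $\alpha=1$: since $X^T_{0,1}=Y^T_{0,1}=L^2_TH^1_x$, their $L^{2l}_\omega$-norms over $[t_0,t_0+T]$ are at most $CT^{1/2}\|k_1\|_{H^1_x}\|u_{m,n}\|_{L^{2l}_\omega L^\infty_{[t_0,t_0+T]}H^1_x}$ and $CT^{1/2}\|k_2\|_{H^1_x\cap L^1_x}\|w_{m,n}\|_{L^{2l}_\omega L^\infty_{[t_0,t_0+T]}H^1_x}$, hence at most $CT^{1/2}$ times a constant of the admissible form, by \eqref{priori_estimate_v_m,n}.

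Taking $L^{2l}_\omega$ norms and splitting the bilinear and trilinear Bourgain-norm products with H\"older into higher $L^p_\omega$-norms (all admissibly bounded, as \eqref{priori_estimate_v_m,n} holds for every exponent), we obtain a coupled system of the schematic form $\mathcal X(T)\le\delta(T)A+CT^{1/2}B+CT^{1-(a+b)}\bigl(\mathcal X(T)\mathcal V(T)+\mathcal X(T)\,C'+\mathcal X(T)^3+\cdots\bigr)$ and $\mathcal V(T)\le CT^{1/2}B'+CT^{1-(a+b)}\bigl(\mathcal X(T)^2+\mathcal V(T)^2+C''\bigr)$, where $A,B,B',C',C''$ are constants of the admissible form and $T\mapsto(\mathcal X(T),\mathcal V(T))$ is nondecreasing, continuous and vanishes as $T\to0$. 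To make all these quantities finite a priori — and thereby license a continuity argument — one first works on $[0,\tau_\rho]$, where $\tau_\rho:=\inf\{t:\|u_{m,n}\|_{X^t_{b,1}}+\|v_{m,n}\|_{\tilde Y^t_{b,1}}\ge\rho\}\wedge T_0>0$ $\mathbb P$-a.s.; the closed system then yields a step length $T^{*}\in(0,1]$ and a per-interval bound $\mathcal X(T^{*})+\mathcal V(T^{*})\le C_0$ that depend only on $\|u_0\|_{H^1_x}$, $\|w_0\|_{H^1_x}$, $\|k_1\|_{H^1_x}$, $\|k_2\|_{H^1_x\cap L^1_x}$, $l$, $T_0$ and the a priori constant, and not on $\rho$ or on $m,n$. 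Covering $[0,\tau_\rho]$ by $N=\lceil T_0/T^{*}\rceil$ intervals of length $T^{*}$ and summing via the elementary gluing property of the restricted norms ($\|f\|_{X^{[0,T_0]}_{b,1}}\le C(b,N)\sum_j\|f\|_{X^{I_j}_{b,1}}$ for $b<1/2$, a consequence of Lemma~2.1 of \cite{kdvmuti}, and likewise for $\tilde Y$) gives $\mathbb E\|(u_{m,n},v_{m,n})\|^{2l}_{X^{\tau_\rho}_{b,1}\times\tilde Y^{\tau_\rho}_{b,1}}\le C$ uniformly in $\rho$ and $m,n$; letting $\rho\to\infty$ forces $\tau_\rho=T_0$ eventually, a.s., and monotone convergence yields \eqref{m,n_priori_estimate}.

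The main obstacle is the uniformity of the step length $T^{*}$: a naive iteration would let it depend on the random, a priori unbounded size of the Bourgain norm at the start of each subinterval. This is avoided because (i) for $b<1/2$ the homogeneous Schr\"odinger evolution contributes $\delta(T)\|u_{m,n}(t_0)\|_{H^1_x}$ with $\delta(T)\downarrow0$, small for small $T$ independently of everything; (ii) the restart of the KdV component keeps $v_{m,n}(t_0)$ in $H^1_x\cap\dot H^{-3/8}_x$ with moments controlled by \eqref{priori_estimate_v_m,n} and \eqref{stochastic_H_x^1H_x^{-3/8}}; (iii) every genuinely nonlinear Bourgain-norm term carries a positive power of $T$, so the continuity argument closes from $\mathcal X(0)=\mathcal V(0)=0$; and (iv) the random solution enters without a $T$-gain only through the stochastic integrals, where it appears solely via $\|u_{m,n}\|_{L^\infty_TH^1_x}$ and $\|w_{m,n}\|_{L^\infty_TH^1_x}$, already controlled in every $L^{2l}_\omega$ by the $\mathcal H^1_x$ a priori bound in hand. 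The stopping time $\tau_\rho$ serves only to legitimize the continuity argument and is removed at the very end.
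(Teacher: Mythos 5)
There is a genuine gap at the heart of your argument: the ``coupled system of the schematic form $\mathcal X(T)\le\delta(T)A+CT^{1/2}B+CT^{1-(a+b)}\bigl(\mathcal X(T)\mathcal V(T)+\cdots\bigr)$'' does not follow once you take $L^{2l}_\omega$-norms. The bilinear and trilinear Duhamel terms produce expectations of \emph{products of Bourgain norms}, e.g. $\mathbb E\bigl(\|u_{m,n}\|^{2l}_{X^{T}_{b,1}}\|v_{m,n}\|^{2l}_{\tilde Y^{T}_{b,1}}\bigr)$, and H\"older in $\omega$ turns these into \emph{higher} moments ($4l$, $6l$, \dots) of the very Bourgain norms you are trying to estimate -- not into $\mathcal X(T)\mathcal V(T)$ at order $2l$, and not into quantities controlled by \eqref{priori_estimate_v_m,n}, which bounds only $L^\infty_t H^1_x$ moments. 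Your parenthetical justification (``all admissibly bounded, as \eqref{priori_estimate_v_m,n} holds for every exponent'') conflates the $\mathcal H^1_x$ a priori bound with Bourgain-norm moments. So either you face an infinite hierarchy of moment inequalities that does not close at any fixed order, or you must use the pathwise bound $\rho$ supplied by your stopping time $\tau_\rho$ on one of the factors -- but then the step length $T^{*}$ and the per-interval constant depend on $\rho$, the number of intervals and the final bound blow up with $\rho$, and the limit $\rho\to\infty$ at the end of your argument is lost. This is precisely the obstruction that makes a deterministic, solution-size-independent step length unavailable at the level of expectations for multiplicative noise.

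The paper resolves this differently, and the difference is not cosmetic: it runs the decomposition \emph{pathwise}. One fixes a single global random constant $A_1(\omega)$ consisting of $\|u_{m,n}\|_{L^\infty_{T_0}H^1_x}$, $\|w_{m,n}\|_{L^\infty_{T_0}H^1_x}$ and the Bourgain norms of the stochastic convolutions over the \emph{whole} interval $[0,T_0]$ (so no smallness in the noise terms is ever needed on subintervals), chooses a random step $T(\omega)$ with $T^{1-(a+b)}\lesssim A_1^{-1}\wedge A_1^{-2}$ so that the deterministic nonlinear terms are absorbed on each subinterval (restarting the KdV component as $v^k_{m,n}(t)=w_{m,n}(t)-U(t-kT)w_{m,n}(kT)$, with the $H^1_x\cap\dot H^{-3/8}_x$ control of $w_{m,n}-U(\cdot)P_mw_0$ handled separately through $A_2(\omega)$), and only then sums the $O(T_0/T(\omega))$ intervals and takes expectations: the random number of intervals is a power of $A_1(\omega)$, whose moments of every order are finite by \eqref{priori_estimate_v_m,n} and Lemma \ref{lem:SI_1}. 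If you want to salvage your write-up, you should abandon the deterministic $T^{*}$ and the expectation-level continuity argument and instead formulate the subdivision pathwise in terms of such a global random constant, taking moments only at the very last step; your preliminary observations (free-evolution smallness $T^{1/2-b}$ for $b<1/2$, $\dot H^{-3/8}_x$ control at restart times, $T^{1/2}$ gain in the stochastic integrals) all remain useful inside that scheme.
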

\begin{proof}
We introduce the following localized equations and stopping times:
\begin{equation}\label{uv_m,n_localizd}
\left\{
\begin{aligned}
	&u_{m,n}^r(t)=-i\int_{0}^{t} S(t-s)\left(\tilde{u}_{m,n}^r(\tilde{v}_{m,n}^r+U(s)P_mw_0)+|\tilde{u}_{m,n}^r|^2\tilde{u}_{m,n}^r\right)ds\\
	&\qquad \quad+ S(t)P_mu_0+\int_{0}^{t} S(t-s)u_{m,n}^rP_m\Phi dW_s^{(1)},\\
	&v_{m,n}^r(t)=\int_{0}^{t} U(t-r)\partial_xP_n\left(|\tilde{u}_{m,n}^r|^2-\frac{1}{2}(\tilde{v}_{m,n}^r+U(s)P_mw_0)^2\right)ds\\
	&\qquad \quad+\int_{0}^{t}U(t-s) P_m((\tilde{v}_{m,n}^r+U(s)P_mw_0)\Psi )dW_s^{(2)},
\end{aligned}
\right.
\end{equation}
and
$$
\tau_{r,n}^{(1)}=\inf\left\{t\geq 0: \|u_{m,n}^r\|_{X^t_{b,1}}\geq r\right\},\ \tau_{r,n}^{(2)}=\inf\left\{t\geq 0: \|v_{m,n}^r\|_{\tilde{Y}^t_{b,1}}\geq r \right\},
$$
for any $r>0$ and $n\in\mathbb{N^+}$. Here, 
$$ 
\tilde{u}^r_{m,n}(x,t)=\theta_r(\|u^r_{m,n}\|_{X^t_{b,1}})u^{r}_{m,n}(x,t) , \ \tilde{v}^r_{m,n}(x,t)=\theta_r(\|v^r_{m,n}\|_{\tilde{Y}^t_{b,1}})v^r_{m,n}(x,t).
$$
Moreover,  we set
$$
\tau_{r,n}= \tau_{r,n}^{(1)}\wedge \tau_{r,n}^{(2)},\ \mathbb{P}\text{-a.s.}
$$

For the sake of simplicity, since the proof is independent to $n$ in this lemma, we use the notation $ \tau_{r}$ to represent $\tau_{r,n}$.

It can be proved like Theorem \ref{thm:local_well-posedness} that for any $l\in\mathbb{N^+}$, \eqref{uv_m,n_localizd} exists a unique solution in $L^{2l}\left(\Omega;X_{b,1}^{T_0\wedge\tau_{r}}\times \tilde{Y}_{b,1}^{T_0\wedge\tau_{r}}\right)$.

Next, inspired by a pathwise method in  \cite{kdvmuti}, we will estimate 
$$ \mathbb{E} \|(u_{m,n},v_{m,n})\|^{2l}_{X_{b,1}^{T_0\wedge\tau_{r}}\times \tilde{Y}_{b,1}^{T_0\wedge\tau_{r}}}.$$

Step 1: We first have
$$
\begin{aligned}
	&\|u_{m,n}(t)\|_{X_{b,1}^{T\wedge\tau_{r}}}\\
	\leq& CT^{1-(a+b)} \|u_{m,n}\|_{X^{T\wedge\tau_{r}}_{b,1}}(\|v_{m,n}\|_{\tilde{Y}^{T\wedge\tau_{r}}_{b,1}}+C(T)\|w_0\|_{H_x^1})
\\
& +CT^{1-(a+b)}\|u_{m,n}\|^3_{X^{T\wedge\tau_{R,r}}_{b,1}}+ C(T_0)\|u_{m,n}\|_{L^\infty_{T_0}H_x^1}\\
&+\left\|\int_{0}^{t}S(t-s)\left[u_{m,n} P_m\Phi dW_s^{(1)}\right]\right\|_{X_{b,1}^{T_0}}
\end{aligned}
$$
and
$$
\begin{aligned}
	&\|v_{m,n}(t)\|_{\tilde{Y} _{b,1}^{T\wedge\tau_{r}}}\\
	\leq&  CT^{1-(a+b)} \left(\|v_{m,n}\|_{\tilde{Y}^{T\wedge\tau_{r}}_{b,1}}^2
	+ C(T)\|w_0\|^2_{H_x^1}+\|u_{m,n}\|^2_{X^{T\wedge\tau_{r}}_{b,1}}\right)\\
	&+\left\|\int_{0}^{t}U(t-s)\left[P_m((v_{m,n}+U(s)P_mw_0)\Psi) dW_s^{(2)}\right]\right\|_{\tilde{Y}_{b,1}^{T_0}},
\end{aligned}
$$
for any $T\in[0,T_0]$.
We set 
$$
\begin{aligned}
	A_1(\omega):=&C(T_0)\|u_{m,n}\|_{L^\infty_{T_0}H_x^1}+C(T_0)\|w_{m,n}\|_{L^\infty_{T_0}H_x^1}\\
	&+\left\|\int_{0}^{t}S(t-s)\left[u_{m,n} P_m\Phi dW_s^{(1)}\right]\right\|_{X_{b,1}^{T_0}}\\
	&+\left\|\int_{0}^{t}U(t-s)\left[P_m((v_{m,n}+U(s)P_mw_0)\Psi) dW_s^{(2)}\right]\right\|_{\tilde{Y}_{b,1}^{T_0}}.
\end{aligned}
$$

Thus, if we choose $T=T(\omega)$ satisfying $ T^{1-(a+b)}\leq  \frac{1}{20CA_1} \wedge \frac{1}{20CA^2_1}$, we have $\|u_{m,n}\|_{X^{T\wedge\tau_{r}}_{b,1}}\leq 2A_1, \|v_{m,n}\|_{\tilde{Y}^{T\wedge\tau_{r}}_{b,1}}\leq 2A_1$.

We define 
$$
v_{m,n}^k(t):= w_{m,n}(t)-U(t-kT)w_{m,n}(kT),\ \forall t\in\left[kT\wedge\tau_{r},(k+1)T\wedge\tau_{r}\right],
$$
It also holds 
$$
\|u_{m,n}\|_{X_{b,1}^{[kT\wedge\tau_{r},(k+1)T\wedge\tau_{r}]}}\leq 2A_1(\omega),\ \left\|v_{m,n}^k\right\|_{\tilde{Y}_{b,1}^{[kT\wedge\tau_{r},(k+1)T\wedge\tau_{r}]}}\leq 2A_1(\omega),
$$
for any $k\in\mathbb{N^+}$.

Step 2. Next, we consider the estimate of $\|w_{m,n}(t)-U(t)P_mw_0\|_{H_x^1\cap\dot{H}_x^{-3/8}}$, for any $t\in [0,\tau_{r}\wedge T_0]$. We have
$$
\begin{aligned}
	&\|w_{m,n}(t)-U(t)P_mw_0\|_{H_x^1\cap\dot{H}_x^{-3/8}}\\
	\leq&\left\|\int_{0}^{t} U(t-s)P_m\left((v_{m,n}+U(s)P_mw_0)\Psi dW_s^{(2)}\right)\right\| _{H_x^1\cap\dot{H}_x^{-3/8}}\\
	&+\sum_{k=0}^{[t/T(\omega)]}\Bigg(\left\|\int_{kT}^{(k+1)T\wedge t} U((k+1)T\wedge t-s) \partial_{x}\left(|u_{m,n}|^2\right)  ds\right\|_{H_x^1\cap\dot{H}_x^{-3/8}}\\
	&+\bigg\|\int_{kT}^{(k+1)T\wedge t} U((k+1)T\wedge t-s) \partial_{x}P_n\Big(\big(v_{m,n}^k(s)\\
	&\qquad\qquad\qquad+ U(s-kT)w_{m,n}(kT)\big)^2 \Big) ds\bigg\|_{H_x^1\cap\dot{H}_x^{-3/8}}\Bigg).
\end{aligned}
$$

By \eqref{restricted_norm_geq_1/2} and Proposition 2.3 in \cite{kdvmuti}, we have
$$
\begin{aligned}
	&\left\|\int_{kT}^{(k+1)T\wedge t} U((k+1)T\wedge t-s) \partial_{x}\left(|u_{m,n}|^2\right)  ds\right\|_{H_x^1\cap\dot{H}_x^{-3/8}}\\
	\leq&\left\|\int_{kT}^{\cdot}U(\cdot-s) \partial_{x}\left(|u_{m,n}|^2 \right)  ds \right\|_{C\left(\left[kT\wedge\tau_{r},(k+1)T\wedge\tau_{r}\right];H_x^1\cap\dot{H}_x^{-3/8}\right)}\\
	\leq&\left\|\int_{kT}^{\cdot}U(\cdot-s) \partial_{x}\left(|u_{m,n}|^2\right)  ds \right\|_{Y^{\left[kT\wedge\tau_{r},(k+1)T\wedge\tau_{r}\right]}_{\tilde{b},1}}\\
	\leq& C(T_0) \|u_{m,n}\|^2_{X_{b,1}^{\left[kT\wedge\tau_{r},(k+1)T\wedge\tau_{r}\right]}}\leq C(T_0)A_1(\omega)^2
\end{aligned}
$$
and 
$$
\begin{aligned}
	&\left\|\int_{kT}^{(k+1)T\wedge t} U(-s) \partial_{x}\Big(\big(v_{m,n}^k(s)+ U(s-kT)w_{m,n}(kT)\big)^2\Big) ds\right\|_{H_x^1\cap\dot{H}^{-3/8}_x}\\
	\leq& \bigg\|\int_{kT}^{\cdot} U(\cdot-s) \partial_{x}\Big(\big(v_{m,n}^k(s)\\
	&\qquad+ U(s-kT)w_{m,n}(kT)\big)^2 \Big) ds \bigg\|_{C\left(\left[kT\wedge\tau_{r},(k+1)T\wedge\tau_{r}\right];H_x^1\cap\dot{H}^{-3/8}_x\right)}\\
	\leq& C\bigg\|\int_{kT}^{\cdot} U(\cdot-s) \partial_{x}\Big(\big(v_{m,n}^k(s)+ U(s-kT)w_{m,n}(kT)\big)^2\Big) ds\bigg\|_{\tilde{Y}^{\left[kT\wedge\tau_{r},(k+1)T\wedge\tau_{r}\right]}_{\tilde{b},1}}\\
	\leq&C(T_0)\left\| \partial_{x}\left(\left(v_{m,n}^k(s)+ U(s-kT)w_{m,n}(kT)\right)^2\right)\right\|_{\tilde{Y}^{\left[kT\wedge\tau_{r},(k+1)T\wedge\tau_{r}\right]}_{-a,1}}\\
	\leq &C(T_0) \left(\left\|v_{m,n}^k\right\|^2_{\tilde{Y}^{\left[kT\wedge\tau_{r},(k+1)T\wedge\tau_{r}\right]}_{b,1}} + \|w_{m,n}(kT)\|^2_{H_x^1}\right)\\
	 \leq &C(T_0)\Bigg(A_1(\omega)^2+C(T_0) \|w_{m,n}\|^2_{L^\infty_{\tau_{r}\wedge T_0}H_x^1}\Bigg),
\end{aligned}
$$
for any $t\in[0,\tau_{r}\wedge T_0]$ and $\tilde{b}\in(1/2,1)$.

Therefore, we have 
\begin{equation}\label{H_x^{-3/8}_esti}
	\|w_{m,n}(t)-U(t)P_mw_0\|_{H_x^1\cap\dot{H}_x^{-3/8}}\leq A_2(\omega),\ \forall t\in[0,\tau_{r}\wedge T_0],
\end{equation}
where 
$$
\begin{aligned}
	A_2(\omega)=&\left\|\int_{0}^{\cdot} U(\cdot-s)P_m\left((v_{m,n}+U(s)P_mw_0)\Psi dW_s^{(2)}\right)\right\|_{L^\infty_{T_0}H_x^1\cap\dot{H}^{-3/8}_x}\\
	&+C(T_0)T^{-1}\cdot A^2_1(\omega).
\end{aligned}
$$

Step 3. 
Writing $ (u_{m,n},v_{m,n})$ by a finite summation, on the one hand, we have
$$
\begin{aligned}
	\|u_{m,n}\|_{X_{b,1}^{T_0\wedge\tau_{r}}}\leq& \sum_{k=0}^{[T_0/T]}\|u_{m,n}\|_{X_{b,1}^{\left[kT\wedge\tau_{r}, (k+1)T\wedge\tau_{r}\right]}}\\
	\leq & C(T_0)T^{-1}\cdot A_1(\omega).
\end{aligned}
$$

 On the other hand, since $v_{m,n}(t)=v_{m,n}^{(k)}(t)+U(t-kT)w_{m,n}(kT)-U(t)P_mw_0$, for any $ t\in[ kT\wedge\tau_{r}, (k+1)T\wedge\tau_{r}]$, we have
$$
\begin{aligned}
	\|v_{m,n}\|_{\tilde{Y}_{b,1}^{T_0\wedge\tau_{r}}}
	\leq &\sum_{k=0}^{[T_0/T]}\|v_{m,n}\|_{\tilde{Y}_{b,1}^{\left[kT\wedge\tau_{r}, (k+1)T\wedge\tau_{r}\right]}}\\
	\leq &\sum_{k=0}^{[T_0/T]}\left\|v_{m,n}^{(k)}\right\|_{\tilde{Y}_{b,1}^{\left[kT\wedge\tau_{r}, (k+1)T\wedge\tau_{r}\right]}}\\
	&+\|U(t-kT)w_{m,n}(kT)-U(t)P_mw_0\|_{\tilde{Y}_{b,1}^{\left[kT\wedge\tau_{r}, (k+1)T\wedge\tau_{r}\right]}}\\
	\leq& \left(\frac{T_0}{T}+1\right) \left(A_1(\omega)+A_2(\omega)\right).
\end{aligned}
$$

Therefore, according to $T^{-1}\geq C A^{1/(1-(a+b))}_1(\omega)\vee C A^{2/(1-(a+b))}_1(\omega)$, \eqref{SI_1}-\eqref{stochastic_H_x^1H_x^{-3/8}} and the representation of $A_1(\omega), A_2(\omega)$, we have
$$
\begin{aligned}
	&\mathbb{E}\|u_{m,n}\|^{2l}_{X_{b,1}^{T_0\wedge\tau_{r}}}\\
	\leq&C(T_0)\mathbb{E}A_1(\omega)^{2l+2l/(1-(a+b))}+C(T_0)\mathbb{E}A_1(\omega)^{2l+4l/(1-(a+b))}\\
	\leq & C\left(T_0,\|k_1\|_{H_x^{1}},\|k_2\|_{H_x^{1}\cap L_x^1},\|u_0\|_{H_x^1},\|w_0\|_{H_x^1},l\right)
\end{aligned}
$$
and similarly 
$$
\begin{aligned}
	&\mathbb{E}\|v_{m,n}\|^{2l}_{\tilde{Y}_{b,1}^{T_0\wedge\tau_{r}}}
	\leq C\left(T_0,\|k_1\|_{H_x^{1}},\|k_2\|_{H_x^{1}\cap L_x^1},\|u_0\|_{H_x^1},\|w_0\|_{H_x^1},l\right),
\end{aligned}
$$
which finish the proof of \eqref{m,n_priori_estimate} by letting $r\uparrow+\infty$.

\end{proof}
Next, we study the convergence of $\{(u_{m,n},v_{m,n})\}_{m,n\in\mathbb{N^+}}$ as $n\uparrow \infty$.
Let us set
\begin{equation}\label{u_m}
	\left\{
	\begin{aligned}
		&d u_{m}=i\partial_{xx}u_{m}dt -iu_{m}(v_{m}+U(t)P_mw_0)dt-i|u_{m}|^2u_{m}dt+u_{m} P_m\Phi dW_t^{(1)},\\
		&d v_{m}=-\partial_{xxx}v_{m}dt+  \partial_x\left(|u_{m}|^2\right)dt-\frac{1}{2}\partial_{x}\left((v_{m}+U(t)P_mw_0)^2\right)dt\\
		&\qquad+ P_m\left((v_{m}+U(t)P_mw_0)\Psi dW_t^{(2)}\right),\\
		&u_{m}(0)=P_mu_0,\  v_{m}(0)=0.
	\end{aligned}
	\right.
\end{equation}

\begin{lemma}\label{lem:convergence_n}
For any $T_0>0$, $l\in\mathbb{N^+}$,  we have
\begin{equation}\label{priori_esti_H_x^1_u_m_v_m}
	\begin{aligned}
		&\mathbb{E}\|(u_m,v_m)\|^{2l}_{L^\infty\left(0,T_0;\mathcal{H}_x^1\right)}\\
		\leq&C\left(\|u_0\|_{H_x^1}, \|w_0\|_{H_x^1},\|k_1\|_{H_x^{1}},\|k_2\|_{H_x^{1}\cap L_x^1},T_0,l\right)
	\end{aligned}
\end{equation}
and
\begin{equation}\label{priori_esti_bourgain_u_m_v_m}
\begin{aligned}
&\mathbb{E}\|(u_m,v_m)\|^{2l}_{X_{b,1}^{T_0}\times \tilde{Y}_{b,1}^{T_0}}\\
\leq&C\left(\|u_0\|_{H_x^1}, \|w_0\|_{H_x^1},\|k_1\|_{H_x^{1}},\|k_2\|_{H_x^{1}\cap L_x^1},T_0,l\right).
\end{aligned}	
\end{equation}
\end{lemma}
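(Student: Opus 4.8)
The plan is to realize \eqref{u_m} as the $n\uparrow\infty$ limit of the equations \eqref{uv_m,n} and then to transfer the uniform bounds \eqref{priori_estimate_v_m,n} and \eqref{m,n_priori_estimate} to the limit by lower semicontinuity. First I would establish, exactly as in the proof of Theorem \ref{thm:local_well-posedness}, that \eqref{u_m} has a unique strong solution $(u_m,v_m)$ in $L^{2l}(\Omega;X_{b,1}^{T_0}\times\tilde{Y}_{b,1}^{T_0})$, and also in $L^{2l}(\Omega;C([0,T_0];H_x^1\times(H_x^1\cap\dot{H}_x^{-3/8})))$: one localizes \eqref{u_m} with the cut-offs $\theta_r$ and stopping times $\tau_r$, performs a fixed point argument in $L^{2l}(\Omega;X_{b,1}^{T^*}\times\tilde{Y}_{b,1}^{T^*})$ using Lemma \ref{lem:KS_to_S}, Lemma \ref{lem:trilinear_estimate}, Lemma \ref{lem:SS_to_K}, \eqref{restricted_norm_geq_1/2} and Lemma \ref{lem:SI_1}, and then iterates over $[0,T_0]$; the fixed low-frequency projection $P_m$ is harmless since $\|P_mf\|\le\|f\|$ in all the norms involved. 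At this stage $\|u_m\|_{X_{b,1}^{T_0}}$ and $\|v_m\|_{\tilde{Y}_{b,1}^{T_0}}$ are almost surely finite, even if not yet bounded uniformly in $m$.

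Next I would prove that $(u_{m,n},v_{m,n})\to(u_m,v_m)$ in $L^{2l}(\Omega;X_{b,1}^{T_0}\times\tilde{Y}_{b,1}^{T_0})$ as $n\uparrow\infty$. The only structural difference between \eqref{uv_m,n} and \eqref{u_m} is the projection $P_n$ standing in front of the two KdV-type nonlinearities, so in the equation for the difference $(u_{m,n}-u_m,v_{m,n}-v_m)$ there appear, besides the usual differences of nonlinearities, the two new source terms $(I-P_n)\partial_x(|u_m|^2)$ and $(I-P_n)\partial_x((v_m+U(s)P_mw_0)^2)$. The differences of nonlinearities are controlled by the same bilinear, trilinear and stochastic estimates, with $\|P_n\|\le1$ so that all constants are independent of $n$, and the inequality is closed by a pathwise/Gronwall argument as in the proof of Lemma \ref{lem:priori_estimate_bourgain_norm_u_m,n}, chopping $[0,T_0]$ into finitely many short intervals whose (random) length is controlled through \eqref{priori_estimate_v_m,n}, \eqref{m,n_priori_estimate} and the a.s.\ finite Bourgain norms of $(u_m,v_m)$. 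For the source terms one observes that, since $(u_m,v_m)$ is now fixed, $\partial_x(|u_m|^2)$ and $\partial_x((v_m+U(s)P_mw_0)^2)$ already belong to $\tilde{Y}_{-a,1}^{T_0}$ almost surely by Lemma \ref{lem:SS_to_K}; because $I-P_n$ is a spatial Fourier truncation, $\|(I-P_n)g\|_{\tilde{Y}_{-a,1}^{T_0}}\to0$ for each such fixed $g$ by dominated convergence on the Fourier side, and by dominated convergence in $\omega$, using $\|\partial_x(|u_m|^2)\|_{\tilde{Y}_{-a,1}^{T_0}}\le C\|u_m\|_{X_{b,1}^{T_0}}^2\in L^l(\Omega)$ and the analogous bound for the other quadratic, also in $L^{2l}(\Omega)$. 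Hence these source contributions vanish in $L^{2l}(\Omega)$ as $n\to\infty$, the difference estimate closes, and the Bourgain convergence upgrades, via the $b'>1/2$ continuity estimates used in the proof of Theorem \ref{thm:local_well-posedness}, to convergence in $L^{2l}(\Omega;C([0,T_0];\mathcal{H}_x^1))$.

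Finally, \eqref{priori_esti_H_x^1_u_m_v_m} and \eqref{priori_esti_bourgain_u_m_v_m} would follow by lower semicontinuity under this convergence together with Fatou's lemma: passing to an almost surely convergent subsequence, $\|(u_m,v_m)\|_{L_{T_0}^\infty\mathcal{H}_x^1}\le\liminf_n\|(u_{m,n},v_{m,n})\|_{L_{T_0}^\infty\mathcal{H}_x^1}$ and $\|(u_m,v_m)\|_{X_{b,1}^{T_0}\times\tilde{Y}_{b,1}^{T_0}}\le\liminf_n\|(u_{m,n},v_{m,n})\|_{X_{b,1}^{T_0}\times\tilde{Y}_{b,1}^{T_0}}$ pathwise, and then Fatou turns \eqref{priori_estimate_v_m,n} and \eqref{m,n_priori_estimate} into the claimed bounds. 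The hard part will be the convergence step, and within it the handling of the source terms $(I-P_n)\partial_x(\cdot)$: removing the frequency projection that stands in front of $\partial_x$ would cost a derivative if estimated crudely, so one must route it through the fixed limit solution and exploit that the quadratic nonlinearities \emph{together with their $x$-derivative} already lie in the negative-regularity Bourgain space on which $I-P_n$ converges strongly to $0$; the compact Fourier support of $v_{m,n}$, available for fixed $n$, is lost in the limit, so this step has to be made quantitative in $n$ rather than by a soft compactness argument.
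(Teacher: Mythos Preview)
Your argument has a circularity that the paper's proof is specifically designed to avoid. You write that, ``exactly as in the proof of Theorem \ref{thm:local_well-posedness}'', one obtains a solution $(u_m,v_m)\in L^{2l}(\Omega;X_{b,1}^{T_0}\times\tilde{Y}_{b,1}^{T_0})$ and that ``at this stage $\|u_m\|_{X_{b,1}^{T_0}}$ and $\|v_m\|_{\tilde{Y}_{b,1}^{T_0}}$ are almost surely finite''. But Theorem \ref{thm:local_well-posedness} only gives global well-posedness of the \emph{localized} equation \eqref{truncated_eq}; for the untruncated \eqref{u_m} you obtain a solution only on $[0,\iota_r^*)$ where $\iota_r^*=\lim_{r\uparrow\infty}\iota_r$, and at this point you have no reason to know that $\iota_r^*\ge T_0$ a.s. That conclusion is exactly what the a priori estimates \eqref{priori_esti_H_x^1_u_m_v_m}--\eqref{priori_esti_bourgain_u_m_v_m} are meant to deliver. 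Your convergence step then compounds the problem: you propose to close the difference estimate on $[0,T_0]$ by ``chopping $[0,T_0]$ into finitely many short intervals whose (random) length is controlled through \ldots\ the a.s.\ finite Bourgain norms of $(u_m,v_m)$'', but those norms on the full interval are not yet available. Likewise, your treatment of the source terms $(I-P_n)\partial_x(|u_m|^2)$ relies on $\|u_m\|_{X_{b,1}^{T_0}}^2\in L^l(\Omega)$, which is again the unproved conclusion.

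The paper breaks the circularity by working throughout with the joint stopping time $\tilde{\iota}_{r,n}=\iota_r\wedge\tau_{r,n}$, on which both $(u_m,v_m)$ and $(u_{m,n},v_{m,n})$ have Bourgain norms $\le r$ by construction. The difference estimates \eqref{convergence_auxiliary_1}--\eqref{convergence_auxiliary_2} are proved on $[0,T\wedge\tilde{\iota}_{r,n}]$ with $T=T(r)$ chosen small, not on $[0,T_0]$. Then comes the key step your outline is missing: a Chebyshev/Borel--Cantelli argument shows that along a subsequence $n_j$, with probability $>1-\varepsilon$ the difference is $\le r/3$ on $[0,T\wedge\tilde{\iota}_{r,n_j}]$, and on that event one can compare stopping times to conclude $T\wedge\iota_{r/3}\le T\wedge\tilde{\iota}_{r,n_j}$. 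This is what allows one to transfer the known bounds \eqref{m,n_priori_estimate} from $(u_{m,n_j},v_{m,n_j})$ to a bound on $(u_m,v_m)$ on $[0,T\wedge\iota_{r/3}]$ that is \emph{independent of $r$}; only then does one iterate to $[0,T_0\wedge\iota_r]$ and finally send $r\uparrow\infty$ by monotone convergence. Your Fatou/lower-semicontinuity idea at the end is fine once convergence is established, but the convergence itself must be carried out on the stopped intervals, not on $[0,T_0]$.
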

\begin{proof}
For any $r>0$ and $n\in\mathbb{N^+}$, let us define some stopping times
$$
\iota_{r,m}^{(1)}=\inf\left\{t\geq 0: \|u_{m}^r\|_{X^t_{b,1}}\geq r\right\},\ \iota_{r,m}^{(2)}=\inf\left\{t\geq 0: \|v_{m}^r\|_{\tilde{Y}^t_{b,1}}\geq r \right\},
$$
and 
$$
\iota_{r,m}= \iota_{r,m}^{(1)}\wedge \iota_{r,m}^{(2)} 
$$
almost surely. Since  the proof of this lemma is independent to $m$, for the sake of simplicity, we will use the notation $\iota_{r}$ to represent $\iota_{r,m}$. Let us also set 
$$
\tilde{\iota}_{r,n}:= \iota_{r}\wedge\tau_{r,n}^{(1)}\wedge\tau_{r,n}^{(2)},
\ \mathbb{P} \text{-a.s.},
$$
where $\tau_{r,n}^{(1)},\ \tau_{r,n}^{(2)}$ are defined in Lemma \ref{lem:priori_estimate_bourgain_norm_u_m,n}. 

Similarly like Theorem \ref{thm:local_well-posedness}, we can prove the local well-posedness of \eqref{u_m} in $X_{b,1}^{T_0\wedge\iota_{r}}\times \tilde{Y}_{b,1}^{T_0\wedge\iota_{r}}$.
For convergence, let us consider the equations that$ (\tilde{u}_{m,n},\tilde{v}_{m,n}):=(u_m-u_{m,n},v_m-v_{m,n})$ satisfies:
\begin{equation}\label{tilde_u,v_m,n}
	\begin{aligned}
		\tilde{u}_{m,n}=&-i\int_{0}^{t\wedge\tilde{\iota}_{r,n}} S(t\wedge\tilde{\iota}_{r,n}-s)\left(v_m\tilde{u}_{m,n}+u_{m,n}\tilde{v}_{m,n}+\tilde{u}_{m,n}U(s)P_mw_0\right)ds\\
		&-i\int_{0}^{t\wedge\tilde{\iota}_{r,n}}S(t\wedge\tilde{\iota}_{r,n}-s)\left(|u_m|^2\tilde{u}_{m,n}+u_{m,n}u_m\bar{\tilde{u}}_{m,n}+u_{m,n}\tilde{u}_{m,n}\bar{u}_{m,n}\right)ds\\
		&+\int_{0}^{t\wedge\tilde{\iota}_{r,n}} S(t\wedge\tilde{\iota}_{r,n}-s) \tilde{u}_{m,n} P_m\Phi dW_s^{(1)},\\
		\tilde{v}_{m,n}=&\int_{0}^{t\wedge\tilde{\iota}_{r,n}} U(t\wedge\tilde{\iota}_{r,n}-s) \left(\partial_{x}P_n\left(u_m\bar{\tilde{u}}_{m,n}+\tilde{u}_{m,n}\bar{u}_{m,n}\right)+\partial_xP_{\geq n}\left(|u_m|^2\right)\right)ds\\
		&-\frac{1}{2}\int_{0}^{t\wedge\tilde{\iota}_{r,n}}U(t\wedge\tilde{\iota}_{r,n}-s)\left(\partial_{x}P_n(\tilde{v}_{m,n}(v_m+v_{m,n}+2U(s)P_mw_0))\right)ds\\
		&+\int_{0}^{t\wedge\tilde{\iota}_{r,n}} U(t\wedge\tilde{\iota}_{r,n}-s)\partial_xP_{\geq n} \left(|u_m|^2-\frac{1}{2}\left(v_{m}+U(s)P_mw_0\right)^2\right)ds\\
		&+\int_{0}^{t\wedge\tilde{\iota}_{r,n}} U(t\wedge\tilde{\iota}_{r,n}-s)  P_m\left(\tilde{v}_{m,n}\Psi dW_s^{(2)}\right),
	\end{aligned}
\end{equation}
for any $t\in\left[0,T_0\right]$.

Therefore, by Lemma \ref{lem:KS_to_S}, Lemma \ref{lem:trilinear_estimate} and Lemma \ref{lem:SS_to_K}, we have 
$$
\begin{aligned}
	&\|\tilde{u}_{m,n}\|_{X_{b,1}^{T\wedge\tilde{\iota}_{r,n}}}\\
	\leq& CT^{1-(a+b)}\Big(\| v_m\|_{\tilde{Y}_{b,1}^{T\wedge\tilde{\iota}_{r,n}}}\| \tilde{u}_{m,n}\|_{X_{b,1}^{T\wedge\tilde{\iota}_{r,n}}}+\|u_{m,n}\|_{X_{b,1}^{T\wedge\tilde{\iota}_{r,n}}}\|\tilde{v}_{m,n}\|_{\tilde{Y}_{b,1}^{T\wedge\tilde{\iota}_{r,n}}}\\
	&\qquad\qquad\qquad+\| \tilde{u}_{m,n}\|_{X_{b,1}^{T\wedge\tilde{\iota}_{r,n}}}\|w_0\|_{H_x^1}\Big)\\
	&+CT^{1-(a+b)}\bigg(\|u_m\|_{X_{b,1}^{T\wedge\tilde{\iota}_{r,n}}}\|\tilde{u}_{m,n}\|_{X_{b,1}^{T\wedge\tilde{\iota}_{r,n}}}\|u_{m,n}\|_{X_{b,1}^{T\wedge\tilde{\iota}_{r,n}}} \\
	&+\|u_m\|^2_{X_{b,1}^{T\wedge\tilde{\iota}_{r,n}}}\|\tilde{u}_{m,n}\|_{X_{b,1}^{T\wedge\tilde{\iota}_{r,n}}}+\|u_{m,n}\|^2_{X_{b,1}^{T\wedge\tilde{\iota}_{r,n}}}\|\tilde{u}_{m,n}\|_{X_{b,1}^{T\wedge\tilde{\iota}_{r,n}}}\bigg)\\
	&+\left\|\int_{0}^{t} S(t-s) \tilde{u}_{m,n} P_m\Phi dW_s^{(1)}\right\|_{X_{b,1}^{T\wedge\tilde{\iota}_{r,n}}}
\end{aligned}
$$
and
$$
\begin{aligned}
	&\|\tilde{v}_{m,n}\|_{\tilde{Y}_{b,1}^{T\wedge\tilde{\iota}_{r,n}}}\\
	\leq& CT^{1-(a+b)}\left(\|u_m\|_{X_{b,1}^{T\wedge\tilde{\iota}_{r,n}}}\|\tilde{u}_{m,n}\|_{X_{b,1}^{T\wedge\tilde{\iota}_{r,n}}}+\|u_{m,n}\|_{X_{b,1}^{T\wedge\tilde{\iota}_{r,n}}}\|\tilde{u}_{m,n}\|_{X_{b,1}^{T\wedge\tilde{\iota}_{r,n}}}\right)\\
	&+CT^{1-(a+b)}\left(\|\tilde{v}_{m,n}\|_{X_{b,1}^{T\wedge\tilde{\iota}_{r,n}}}\left(\|v_{m,n}\|_{\tilde{Y}_{b,1}^{T\wedge\tilde{\iota}_{r,n}}}+\|v_{m}\|_{\tilde{Y}_{b,1}^{T\wedge\tilde{\iota}_{r,n}}}+\|w_0\|_{H_x^1}\right)\right)\\
	&+CT^{1-(a+b)}\Big(\left\|P_{\geq n}\partial_{x}(|u_m|^2)\right\|_{\tilde{Y}_{-a,1}^{T\wedge\tilde{\iota}_{r,n}}}\\
	&\qquad\qquad+\left\|P_{\geq n}\partial_{x}(\left(v_{m}+U(s)P_mw_0\right)^2)\right\|_{\tilde{Y}_{-a,1}^{T\wedge\tilde{\iota}_{r,n}}}\Big)\\
	&+\left\|\int_{0}^{t} U(t-s)  P_m(\tilde{v}_{m,n}\Psi dW_s^{(2)})\right\|_{\tilde{Y}_{b,1}^{T\wedge\tilde{\iota}_{r,n}}}.
\end{aligned}
$$
Thus, we have
$$
\begin{aligned}
	&\mathbb{E}\|\tilde{u}_{m,n}\|^{2l}_{X_{b,1}^{T\wedge\tilde{\iota}_{r,n}}}\\
	\leq& CT^{2l-2l(a+b)}\left(r^{2l}+\|w_0\|^{2l}_{H_x^1}+r^{4l}\right)\mathbb{E}\left(\|\tilde{u}_{m,n}\|^{2l}_{X_{b,1}^{T\wedge\tilde{\iota}_{r,n}}}+\|\tilde{v}_{m,n}\|^{2l}_{\tilde{Y}_{b,1}^{T\wedge\tilde{\iota}_{r,n}}}\right)\\
	&+C\|k_1\|^{2l}_{H_x^1}T^{bl}\mathbb{E}\|\tilde{u}_{m,n} \|^{2l}_{X_{b,1}^{T\wedge\tilde{\iota}_{r,n}}},\\
	&\mathbb{E}\|\tilde{v}_{m,n}\|^{2l}_{\tilde{Y}_{b,1}^{T\wedge\tilde{\iota}_{r,n}}}\\
	\leq &CT^{2l-2l(a+b)}(r^{2l}+\|w_0\|^{2l}_{H_x^1})\mathbb{E}\left(\|\tilde{u}_{m,n}\|^{2l}_{X_{b,1}^{T\wedge\tilde{\iota}_{r,n}}}+\|\tilde{v}_{m,n}\|^{2l}_{\tilde{Y}_{b,1}^{T\wedge\tilde{\iota}_{r,n}}}\right)\\
	&+CT^{2l-2l(a+b)}\mathbb{E}\left\|P_{\geq n}\partial_{x}(|u_m|^2)\right\|^{2l}_{\tilde{Y}_{-a,1}^{T\wedge\tilde{\iota}_{r,n}}}\\
	&+CT^{2l-2l(a+b)}\mathbb{E}\left\|P_{\geq n}\partial_{x}(\left(v_{m}+U(s)P_mw_0\right))\right\|^{2l}_{\tilde{Y}_{-a,1}^{T\wedge\tilde{\iota}_{r,n}}}\\
	&+C\|k_2\|^{2l}_{H_x^1\cap L_x^1}T^{bl}\mathbb{E}\|\tilde{v}_{m,n} \|^{2l}_{\tilde{Y}_{b,1}^{T\wedge\tilde{\iota}_{r,n}}},
\end{aligned}
$$
for proper $a,b$.

Moreover, we have
$$
\begin{aligned}
	&\|\tilde{u}_{m,n}\|_{C\left(\left[0,T\wedge\tilde{\iota}_{r,n}\right];H_x^1\right)}\\
	\leq&\left\|\int_{0}^{t} S(t-s)(v_m\tilde{u}_{m,n}+u_{m,n}\tilde{v}_{m,n}+\tilde{u}_{m,n}U(s)P_mw_0)ds\right\|_{X_{\tilde{b},1}^{T\wedge\tilde{\iota}_{r,n}}}\\
	&+\left\|\int_{0}^{t}S(t-s)\left(|u_m|^2\tilde{u}_{m,n}+u_{m,n}u_m\bar{\tilde{u}}_{m,n}+u_{m,n}\tilde{u}_{m,n}\bar{u}_{m,n}\right)ds\right\|_{X_{\tilde{b},1}^{T\wedge\tilde{\iota}_{r,n}}}\\
	&+\left\|\int_{0}^{t} S(t-s)\tilde{u}_{m,n} P_m\Phi dW_s^{(1)}\right\|_{C\left(\left[0,T\wedge\tilde{\iota}_{r,n}\right];H_x^1\right)},\\
	&\|\tilde{v}_{m,n}\|_{C([0,T\wedge\tilde{\iota}_{r,n}];H_x^1\cap\dot{H}^{-3/8}_x)}\\
	\leq&\left\|\int_{0}^{t} U(t-s) \left(\partial_{x}P_n(u_m\bar{\tilde{u}}_{m,n}+\tilde{u}_{m,n}\bar{u}_{m,n})+\partial_xP_{\geq n}|u_m|^2\right)ds\right\|_{\tilde{Y}_{\tilde{b},1}^{T\wedge\tilde{\iota}_{r,n}}}\\
	&+C\left\|\int_{0}^{t}U(t-s)\left(\partial_{x}P_n(\tilde{v}_{m,n}(v_m+v_{m,n}+2U(s)P_mw_0))\right)ds\right\|_{\tilde{Y}_{\tilde{b},1}^{T\wedge\tilde{\iota}_{r,n}}}\\
	&+\left\|\int_{0}^{t} U(t-s)\partial_xP_{\geq n} \left(|u_m|^2-\frac{1}{2}\left(v_{m}+U(s)P_mw_0\right)^2\right)ds\right\|_{\tilde{Y}_{\tilde{b},1}^{T\wedge\tilde{\iota}_{r,n}}}\\
	&+\left\|\int_{0}^{t} U(t-s)  P_m\left(\tilde{v}_{m,n}\Psi dW_s^{(2)}\right)\right\|_{C\left(\left[0,T\wedge\tilde{\iota}_{r,n}\right];H_x^1\cap\dot{H}^{-3/8}_x\right)},
\end{aligned}
$$
for any $\tilde{b}\in(1/2,1)$, $a+\tilde{b}<1$ and $ T\in[0,T_0]$.
Thus, we have 
$$
\begin{aligned}
	&\mathbb{E}\|\tilde{u}_{m,n}\|^{2l}_{C([0,T\wedge\tilde{\iota}_{r,n}];H_x^1)}\\
	\leq&CT^{2l-2l(a+\tilde{b})}\left(r^{2l}+\|w_0\|^{2l}+r^{4l}\right)\mathbb{E}\left(\|\tilde{u}_{m,n}\|^{2l}_{X_{b,1}^{T\wedge\tilde{\iota}_{r,n}}}+\|\tilde{v}_{m,n}\|^{2l}_{\tilde{Y}_{b,1}^{T\wedge\tilde{\iota}_{r,n}}}\right)\\
	&+C(l,\|k_1\|_{H_x^{1}})\cdot T^l \mathbb{E}\|\tilde{u}_{m,n}\|^{2l}_{C\left(\left[0,T\wedge\tilde{\iota}_{r,n}\right];H_x^1\right)},\\
	&\mathbb{E}\|\tilde{v}_{m,n}\|^{2l}_{C\left(\left[0,T\wedge\tilde{\iota}_{r,n}\right];H_x^1\cap\dot{H}^{-3/8}_x\right)}\\
	\leq &CT^{2l-2l(a+\tilde{b})}(r^{2l}+\|w_0\|_{H_x^1}^{2l})\mathbb{E}\left(\|\tilde{u}_{m,n}\|^{2l}_{X_{b,1}^{T\wedge\tilde{\iota}_{r,n}}}+\|\tilde{v}_{m,n}\|^{2l}_{\tilde{Y}_{b,1}^{T\wedge\tilde{\iota}_{r,n}}}\right)\\
	&+CT^{2l-2l\left(a+\tilde{b}\right)}\mathbb{E}\left\|P_{\geq n}\partial_{x}\left(|u_m|^2\right)\right\|^{2l}_{X_{-a,1}^{T\wedge\tilde{\iota}_{r,n}}}\\
	&+CT^{2l-2l\left(a+\tilde{b}\right)}\mathbb{E}\left\|P_{\geq n}\partial_{x}(\left(v_{m}+U(s)P_mw_0\right))\right\|^{2l}_{\tilde{Y}_{-a,1}^{T\wedge\tilde{\iota}_{r,n}}}\\
	&+C(l,\|k_2\|_{H_x^{1}\cap L_x^1}) T^l\mathbb{E}\|\tilde{v}_{m,n}\|^{2l}_{C([0,T\wedge\tilde{\iota}_{r,n}];H_x^1)},
\end{aligned}
$$
for any $\tilde{b}\in(1/2,1)$, $a+\tilde{b}<1$ and $ T\in[0,T_0]$.

Thus, by choosing $T=T(r,l,a,b,T_0,\|k_1\|_{H_x^1},\|k_2\|_{H_x^1\cap L_x^1},\|w_0\|_{H_x^1})$ sufficiently small, we have that
\begin{equation}\label{convergence_auxiliary_1}
\begin{aligned}
	&\mathbb{E}\left(\|\tilde{u}_{m,n}\|^{2l}_{X_{b,1}^{T\wedge\tilde{\iota}_{r,n}}}+\|\tilde{v}_{m,n}\|^{2l}_{\tilde{Y}_{b,1}^{T\wedge\tilde{\iota}_{r,n}}}\right)\\
	\leq& C \mathbb{E}\bigg(\left\|P_{\geq n}\partial_{x}(|u_m|^2)\right\|^{2l}_{X_{-a,1}^{T\wedge\tilde{\iota}_{r,n}}}+\left\|P_{\geq n}\partial_{x}(\left(v_{m}+U(s)P_mw_0\right))\right\|^{2l}_{\tilde{Y}_{-a,1}^{T\wedge\tilde{\iota}_{r,n}}}\bigg)
\end{aligned}
\end{equation}
and 
\begin{equation}\label{convergence_auxiliary_2}
\begin{aligned}
	&\mathbb{E}\left(\|\tilde{u}_{m,n}\|^{2l}_{C([0,T\wedge\tilde{\iota}_{r,n}];H_x^1)}+\|\tilde{v}_{m,n}\|^{2l}_{C([0,T\wedge\tilde{\iota}_{r,n}];H_x^1\cap\dot{H}^{-3/8}_x)}\right)\\
	\leq& C \mathbb{E}\bigg(\left\|P_{\geq n}\partial_{x}(|u_m|^2)\right\|^{2l}_{X_{-a,1}^{T\wedge\tilde{\iota}_{r,n}}}+\left\|P_{\geq n}\partial_{x}(\left(v_{m}+U(s)P_mw_0\right))\right\|^{2l}_{\tilde{Y}_{-a,1}^{T\wedge\tilde{\iota}_{r,n}}}\bigg).
\end{aligned}
\end{equation}
Here, according to the dominated convergence theorem, we can see that the right sides of \eqref{convergence_auxiliary_1} and \eqref{convergence_auxiliary_2} will 
decrease to $0$ as $n\uparrow \infty$.

Hence, for any fixed $\varepsilon>0$ and $\forall j\in\mathbb{N^+}$, by the Chebyshev inequality, we have that there exists a sequence of  $\{n_j\}_{j\in\mathbb{N^+}}$ such that 
$$
\mathbb{P}\left( \|\tilde{u}_{m,n_j}\|_{X_{b,1}^{T\wedge\tilde{\iota}_{r,n_j}}}+\|\tilde{v}_{m,n_j}\|_{\tilde{Y}_{b,1}^{T\wedge\tilde{\iota}_{r,n_j}}}>r/3\right)<\varepsilon/2^j.
$$
Let us denote 
$$
\mathcal{N}_j:=\left\{\|\tilde{u}_{m,n_j}\|_{X_{b,1}^{T\wedge\tilde{\iota}_{r,n_j}}}+\|\tilde{v}_{m,n_j}\|_{\tilde{Y}_{b,1}^{T\wedge\tilde{\iota}_{r,n_j}}}>r/3\right\}.
$$
It is clear that $\{\chi_{\mathcal{N}_j}\}_{j\in\mathcal{N}^+}$ converges to $0$ in probability. Then, we choose a subsequence of $\{\chi_{\mathcal{N}_j}\}_{j\in\mathcal{N}^+}$ still denoted by $\{\chi_{\mathcal{N}_j}\}_{j\in\mathcal{N}^+}$, which converges to $0$ almost surely.

Therefore, we have
\begin{equation}\label{convergence}
\begin{aligned}
	&\mathbb{E}\left(\|u_{m}\|^{2l}_{X_{b,1}^{T\wedge\iota_{r/3}}}+\|v_{m}\|^{2l}_{\tilde{Y}_{b,1}^{T\wedge\iota_{r/3}}}\right)\\
	\leq& C \mathbb{E}\bigg(\left(\left\|P_{\geq n_j}\partial_{x}(|u_m|^2)\right\|^{2l}_{X_{-a,1}^{T\wedge\tilde{\iota}_{r,n_j}}}+\left\|P_{\geq n_j}\partial_{x}(\left(v_{m}+U(s)P_mw_0\right))\right\|^{2l}_{\tilde{Y}_{-a,1}^{T\wedge\tilde{\iota}_{r,n_j}}}\right)\\
	&\qquad\cdot\chi_{\mathcal{N}^c_j}(\omega) \bigg)+C\mathbb{E}\left(\left(\|u_{m,n_j}\|^{2l}_{X_{b,1}^{T}}+\|v_{m,n_j}\|^{2l}_{\tilde{Y}_{b,1}^{T}}\right)\cdot \chi_{\mathcal{N}_j}(\omega)\right).
\end{aligned}
\end{equation}
Thus, by the dominated convergence theorem, we have 
$$
\begin{aligned}
	&\mathbb{E}\left(\|u_{m}\|^{2l}_{X_{b,1}^{T\wedge\iota_{r/3}}}+\|v_{m}\|^{2l}_{\tilde{Y}_{b,1}^{T\wedge\iota_{r/3}}}\right)\\
\leq&C\left(\|u_0\|_{H_x^1}, \|w_0\|_{H_x^1},\|k_1\|_{H_x^{1}},\|k_2\|_{H_x^{1}\cap L_x^1},T_0,l\right).
\end{aligned}
$$

Similarly, we have 
$$
\begin{aligned}
	&\mathbb{E}\|(u_m,v_m)\|^{2l}_{L^\infty\left(0,T\wedge\iota_{r/3};H_x^1\times \left(H_x^1\cap\dot{H}_x^{-3/8}\right)\right)}\\
	\leq&C\left(\|u_0\|_{H_x^1}, \|w_0\|_{H_x^1},\|k_1\|_{H_x^{1}},\|k_2\|_{H_x^{1}\cap L_x^1},T_0,l\right).
\end{aligned}
$$

Thus, using above bounded fact and \eqref{convergence_auxiliary_1}-\eqref{convergence}, by the  dominated convergence theorem, we have
\begin{equation}\label{u_m,n_v_m,n_bourgain_convergence_1}
	\lim_{n\uparrow \infty}\mathbb{E}\left(\|\tilde{u}_{m,n}\|^{2l}_{X_{b,1}^{T\wedge\iota_{r/3}}}+\|\tilde{v}_{m,n}\|^{2l}_{\tilde{Y}_{b,1}^{T\wedge\iota_{r/3}}}\right)=0
\end{equation}
and 
\begin{equation}\label{u_m,n_v_m,n_H1_x_convergence_2}
	\lim_{n\uparrow \infty}\mathbb{E}\left(\|\tilde{u}_{m,n}\|^{2l}_{C([0,T\wedge\iota_{r/3}];H_x^1)}+\|\tilde{v}_{m,n}\|^{2l}_{C([0,T\wedge\iota_{r/3}];H_x^1\cap\dot{H}_x^{-3/8})}\right)=0.
\end{equation}

Furthermore, by dividing $ \left[0,T_0\wedge\iota_{r} \right]$ into $\left[0,T_0\wedge\iota_{r}\right],\ \big[T_0\wedge\iota_{r},2T_0\wedge\iota_{r}\big],... $ finite numbers of intervals and taking the $L^{2l}_{\omega}\left(H_x^1\times \left(H_x^1\cap\dot{H}_x^{-3/8}\right)\right) $ convergences of every initial values under consideration, we have
\begin{equation}\label{u_m,n_v_m,n_bourgain_convergence}
	\lim_{n\uparrow \infty}\mathbb{E}\left(\|\tilde{u}_{m,n}\|^{2l}_{X_{b,1}^{T_0\wedge\iota_{r/3}}}+\|\tilde{v}_{m,n}\|^{2l}_{\tilde{Y}_{b,1}^{T_0\wedge\iota_{r/3}}}\right)=0
\end{equation}
and 
\begin{equation}\label{u_m,n_v_m,n_H1_x_convergence}
	\lim_{n\uparrow \infty}\mathbb{E}\left(\|\tilde{u}_{m,n}\|^{2l}_{C([0,T_0\wedge\iota_{r/3}];H_x^1)}+\|\tilde{v}_{m,n}\|^{2l}_{C([0,T_0\wedge\iota_{r/3}];H_x^1\cap\dot{H}_x^{-3/8})}\right)=0.
\end{equation}

Therefore, by \eqref{priori_estimate_of_u,v} and \eqref{m,n_priori_estimate}, we have
$$
\begin{aligned}
	&\mathbb{E}\left(\|u_m\|_{X_{b,1}^{T_0\wedge\iota_{r}}}^{2l}+\|v_m\|_{\tilde{Y}_{b,1}^{T_0\wedge\iota_{r}}}^{2l}\right)\\
	\leq&C\mathbb{E}\left(\|u_{m,n}\|_{X_{b,1}^{T_0\wedge\iota_{r}}}^{2l}+\|v_{m,n}\|_{\tilde{Y}_{b,1}^{T_0\wedge\iota_{r}}}^{2l}+\|\tilde{u}_{m,n}\|_{X_{b,1}^{T_0\wedge\iota_{r}}}^{2l}+\|\tilde{v}_{m,n}\|_{\tilde{Y}_{b,1}^{T_0\wedge\iota_{r}}}^{2l}\right)\\
	\leq& C\left(\|u_0\|_{H_x^1}, \|w_0\|_{H_x^1},\|k_1\|_{H_x^{1}},\|k_2\|_{H_x^{1}\cap L_x^1},T_0,l\right),
\end{aligned}
$$
which implies \eqref{priori_esti_bourgain_u_m_v_m} by the monotone convergence theorem.

Similarly, we can prove \eqref{priori_esti_H_x^1_u_m_v_m}.
\end{proof}

Finally, we study the convergence of $m$.

\begin{lemma}\label{lem:m_convergence}
For any $T_0>0$, $l\in\mathbb{N^+}$, we have 
\begin{equation}\label{priori_esti_bourgain_u_v}
	\begin{aligned}
		&\mathbb{E}\|(u,v)\|^{2l}_{X_{b,1}^{T_0}\times \tilde{Y}_{b,1}^{T_0}}\\
		\leq&C\left(\|u_0\|_{H_x^1}, \|w_0\|_{H_x^1},\|k_1\|_{H_x^{1}},\|k_2\|_{H_x^{1}\cap L_x^1},T_0,l\right)
	\end{aligned}	
\end{equation}
and 
\begin{equation}\label{priori_esti_H_x^1_u_v}
	\begin{aligned}
		&\mathbb{E}\|(u,v)\|^{2l}_{C\left([0,T_0];\mathcal{H}_x^1\right)}\\
		\leq&C\left(\|u_0\|_{H_x^1}, \|w_0\|_{H_x^1},\|k_1\|_{H_x^{1}},\|k_2\|_{H_x^{1}\cap L_x^1},T_0,l\right).
	\end{aligned}
\end{equation}
\end{lemma}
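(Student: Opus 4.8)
The plan is to repeat, with $m\uparrow\infty$ replacing the role of $n\uparrow\infty$, the pathwise localization argument already carried out for Lemma \ref{lem:convergence_n}. Recall $(u,v)$ solves \eqref{mild_without_linear} and, by Theorem \ref{thm:local_well-posedness}, is governed locally by its own Bourgain norms; I would keep the stopping times $\sigma_r^{(1)},\sigma_r^{(2)}$ already introduced for the localized equation, put $\sigma_r:=\sigma_r^{(1)}\wedge\sigma_r^{(2)}$, and set $\rho_{r,m}:=\sigma_r\wedge\iota_{r,m}$, where $\iota_{r,m}$ are the stopping times of Lemma \ref{lem:convergence_n}. As there, the localized equations are well-posed on $[0,T_0\wedge\rho_{r,m}]$.

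Next I would write the equation satisfied by $(\hat{u}_m,\hat{v}_m):=(u-u_m,v-v_m)$ on $[0,T\wedge\rho_{r,m}]$, obtained by subtracting \eqref{u_m} from \eqref{mild_without_linear}. Its right-hand side splits into \emph{difference terms} — the coupling $u(v+U(s)w_0)-u_m(v_m+U(s)P_mw_0)$, the cubic term $|u|^2u-|u_m|^2u_m$, and the KdV couplings $\partial_x(|u|^2-|u_m|^2)$ and $\partial_x((v+U(s)w_0)^2-(v_m+U(s)P_mw_0)^2)$, each multilinear in $(\hat{u}_m,\hat{v}_m)$ times factors bounded by $r$ or $\|w_0\|_{H^1_x}$ — and \emph{tail terms} carrying a factor $P_{\ge m}$: the datum $S(t)P_{\ge m}u_0$, the pieces $u_m\,U(s)P_{\ge m}w_0$ and $(U(s)P_{\ge m}w_0)\bigl(v+v_m+U(s)(w_0+P_mw_0)\bigr)$ coming from $w_0-P_mw_0$, and the noise tails $u_m\,P_{\ge m}\Phi\,dW^{(1)}_s$, $(U(s)P_{\ge m}w_0)\Psi\,dW^{(2)}_s$ and $P_{\ge m}\bigl((v_m+U(s)P_mw_0)\Psi\bigr)dW^{(2)}_s$ arising from replacing $P_m\Phi$ and $P_m(\cdot)$ by $\Phi$ and the identity.

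Applying Lemmas \ref{lem:KS_to_S}--\ref{lem:SS_to_K} to the difference terms, Lemma \ref{lem:SI_1} and Lemma 2.6 in \cite{kdvmuti} to the stochastic terms, \eqref{restricted_norm_geq_1/2} to the Duhamel integrals, and the $H^1_x\cap\dot H_x^{-3/8}$ bookkeeping from the proof of Lemma \ref{lem:convergence_n}, I would pick $T=T(r,l,a,b,T_0,\|k_1\|_{H^1_x},\|k_2\|_{H^1_x\cap L^1_x},\|w_0\|_{H^1_x})$ small enough to absorb the difference terms and reach
\begin{equation*}
\begin{aligned}
&\mathbb{E}\Big(\|\hat{u}_m\|^{2l}_{X^{T\wedge\rho_{r,m}}_{b,1}}+\|\hat{v}_m\|^{2l}_{\tilde Y^{T\wedge\rho_{r,m}}_{b,1}}+\|\hat{u}_m\|^{2l}_{C([0,T\wedge\rho_{r,m}];H^1_x)}+\|\hat{v}_m\|^{2l}_{C([0,T\wedge\rho_{r,m}];H^1_x\cap\dot H_x^{-3/8})}\Big)\\
&\qquad\le C\,\mathcal R_m,
\end{aligned}
\end{equation*}
where $\mathcal R_m$ is a finite combination of expectations of powers of $\|P_{\ge m}u_0\|_{H^1_x}$, $\|P_{\ge m}w_0\|_{H^1_x}$ and of $\bigl(\sum_k\|u_m\,P_{\ge m}\Phi e_k\|^2_{H^1_x}\bigr)^{1/2}$, $\bigl(\sum_k\|P_{\ge m}((v_m+U(s)P_mw_0)\Psi e_k)\|^2_{H^1_x}\bigr)^{1/2}$, each weighted by the uniform bounds of Lemma \ref{lem:convergence_n}; using $k_1\in H^1_x$, $k_2\in H^1_x\cap L^1_x$, those uniform bounds and dominated convergence, $\mathcal R_m\to0$ as $m\uparrow\infty$.

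Finally, since $\rho_{r,m}$ still depends on $m$, I would run the Chebyshev-plus-subsequence device exactly as in Lemma \ref{lem:convergence_n}: extract $m_j$ with $\mathbb{P}\bigl(\|\hat{u}_{m_j}\|_{X^{T\wedge\rho_{r,m_j}}_{b,1}}+\|\hat{v}_{m_j}\|_{\tilde Y^{T\wedge\rho_{r,m_j}}_{b,1}}>r/3\bigr)<\varepsilon 2^{-j}$, pass to an a.s.\ convergent subsequence of the corresponding indicators, and deduce the bound for $(u,v)$ on $[0,T\wedge\sigma_{r/3}]$ from the uniform bound for $(u_m,v_m)$ in Lemma \ref{lem:convergence_n}; then iterate over finitely many subintervals of $[0,T_0]$ (handling the $L^{2l}_\omega\mathcal H_x^1$-convergence of the successive initial data as in Lemma \ref{lem:convergence_n}) and let $r\uparrow\infty$ by monotone convergence. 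This gives \eqref{priori_esti_bourgain_u_v}, and the same scheme applied to the $C([0,\cdot];\mathcal H_x^1)$ norms gives \eqref{priori_esti_H_x^1_u_v}. The step I expect to be the main obstacle is the control of the noise tails: one must show $\sum_k\|u_m\,P_{\ge m}\Phi e_k\|^2_{H^1_x}\to0$ and its $\Psi$-analogue in $L^l_\omega$ in a way compatible with the localization, which is precisely where the regularity of $k_1,k_2$ and the uniform $H^1_x$-bound on $(u_m,v_m)$ are needed; the $m$-dependence of $\rho_{r,m}$ is the other delicate point, handled by the probabilistic localization just described.
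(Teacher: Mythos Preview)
Your proposal is correct and follows essentially the same approach as the paper: define the combined stopping time $\sigma_R\wedge\iota_{r,m}$, write the difference equation for $(u-u_m,v-v_m)$, split into multilinear difference terms (absorbed by choosing $T$ small) and $P_{\ge m}$-tail terms in the data and the noise, then invoke the Chebyshev/subsequence localization argument of Lemma~\ref{lem:convergence_n}, iterate over subintervals, and finish by monotone convergence. The paper handles the noise tail $P_{\ge m}\bigl((v_m+U(s)w_0)\Psi\bigr)$ by splitting it further into a piece controlled by $\|P_{\ge m/2}k_2\|_{H^1_x}$ and one controlled by $\|P_{\ge m/2}(v_m+U(s)w_0)\|_{Y^T_{b,1}}$, which is exactly the kind of decomposition you anticipate as the ``main obstacle''.
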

\begin{proof}
Just like the proof of Lemma \ref{lem:convergence_n}, let us set
$$
\sigma_R:=\sigma_R^{(1)}\wedge \sigma_R^{(2)},\ \tilde{\sigma}_{R,m}:=\sigma_R\wedge \iota_{r,m},\  \mathbb{P}\text{-a.s.},
$$
for any $ R>0$ and $m\in\mathbb{N^+}$, where $\iota_{r,m}$ are defined in Lemma \ref{lem:convergence_n}.

For any $t\in[0,T_0]$, we consider the following equation that $(\tilde{u}_m,\tilde{v}_m):=(u-u_m,v-v_m)$ satisfies:
\begin{equation}\label{tilde_u_m_v_m}
	\begin{aligned}
		\tilde{u}_{m}
		=&-i\int_{0}^{t\wedge\tilde{\sigma}_{R,m}} S(t\wedge\tilde{\sigma}_{R,m}-s)(v\tilde{u}_{m}+u_{m}\tilde{v}_{m}+\tilde{u}_mU(s)w_0\\
		&\qquad\qquad+u_mU(s)P_{\geq m}w_0)ds+S(t\wedge\tilde{\sigma}_{R,m})P_{\geq m}u_0\\
		&-i\int_{0}^{t\wedge\tilde{\sigma}_{R,m}}S(t\wedge\tilde{\sigma}_{R,m}-s)\left(|u|^2\tilde{u}_{m}+u_{m}u\bar{\tilde{u}}_{m}+u_{m}\tilde{u}_{m}\bar{u}_{m}\right)ds\\
		&+\int_{0}^{t\wedge\tilde{\sigma}_{R,m}} S(t\wedge\tilde{\sigma}_{R,m}-s) \tilde{u}_{m}\Phi dW_s^{(1)}\\
		&+\int_{0}^{t} S(t\wedge\tilde{\sigma}_{R,m}-s) u_m P_{\geq m}\Phi dW_s^{(1)},\\
		\tilde{v}_{m}=&\int_{0}^{t\wedge\tilde{\sigma}_{R,m}} U(t\wedge\tilde{\sigma}_{R,m}-s) \partial_{x}(u_m\bar{\tilde{u}}_{m}+\tilde{u}_{m}\bar{u})ds\\
		&-\frac{1}{2}\int_{0}^{t\wedge\tilde{\sigma}_{R,m}}U(t\wedge\tilde{\sigma}_{R,m}-s)\partial_{x}((\tilde{v}_{m}+U(s)P_{\geq m}w_0)\\
		&\qquad\qquad\cdot(v+v_m+U(s)(I+P_m)w_0))ds\\
		&+\int_{0}^{t\wedge\tilde{\sigma}_{R,m}} U(t\wedge\tilde{\sigma}_{R,m}-s)P_{\geq m}\left((v_m+U(s)w_0)\Psi dW_s^{(2)}\right)\\
		&+\int_{0}^{t\wedge\tilde{\sigma}_{R,m}} U(t\wedge\tilde{\sigma}_{R,m}-s)  \left(\tilde{v}_{m}\Psi dW_s^{(2)}\right).
	\end{aligned}
\end{equation}

It is clear that once we choose $T=T(R,\|w_0\|_{H_x^1},T_0,l)$ sufficiently small, we have
\begin{equation}\label{convergence_m_aux}
\begin{aligned}
	&\mathbb{E}\left(\|\tilde{u}_m\|^{2l}_{X_{b,1}^{T\wedge\tilde{\sigma}_{R,m}}}+\|\tilde{v}_m\|^{2l}_{\tilde{Y}_{b,1}^{T\wedge\tilde{\sigma}_{R,m}}}\right)\\
	\leq& C\left(T_0,r,l,b,\|k_1\|_{H_x^1},\|k_2\|_{H_x^1\cap L_x^1},\|w_0\|_{H_x^1}\right)	\Big(\|P_{\geq m}u_0\|^{2l}_{H_x^1}+\|P_{\geq m}w_0\|^{2l}_{H_x^1}\\
	&+\|P_{\geq m}k_1\|^{2l}_{H_x^1}+\|P_{\geq m/2}k_2\|^{2l}_{H_x^1}+ \mathbb{E}\|P_{\geq m/2}(v_m+U(s)w_0)\|_{Y_{b,1}^{T\wedge\tilde{\sigma}_{R,m}}}^{2l}\Big).
\end{aligned}
\end{equation}

Similarly, we can prove 
\begin{equation}\label{convergence_m_aux_2}
	\begin{aligned}
		&\mathbb{E}\left(\|\tilde{u}_m\|^{2l}_{L_{T\wedge\tilde{\sigma}_{R,m}}^\infty H_x^1}+\|\tilde{v}_m\|^{2l}_{L_{T\wedge\tilde{\sigma}_{R,m}}^\infty H_x^1\cap \dot{H}_x^{-3/8}}\right)\\
		\leq& C\left(T_0,r,l,b,\|k_1\|_{H_x^1},\|k_2\|_{H_x^1\cap L_x^1},\|w_0\|_{H_x^1}\right)	\Big(\|P_{\geq m}u_0\|^{2l}_{H_x^1}+\|P_{\geq m}w_0\|^{2l}_{H_x^1}\\
		&+\|P_{\geq m}k_1\|^{2l}_{H_x^1}+\|P_{\geq m/2}k_2\|^{2l}_{H_x^1}+ \mathbb{E}\|P_{\geq m/2}(v_m+U(s)w_0)\|_{Y_{b,1}^{T\wedge\tilde{\sigma}_{R,m}}}^{2l}\Big).
	\end{aligned}
\end{equation}

Using the fact that 
$$
\mathbb{E}\left\|\int_{0}^{t} S(t-s) u_m P_{\geq m}\Phi dW_s^{(1)}\right\|^{2l}_{X_{b,1}^{T\wedge\tilde{\sigma}_{R,m}}}\leq C(T)\|P_{\geq m}k_1\|^{2l}_{H_x^1}\mathbb{E}\|u_m\|^{2l}_{X_{b,1}^{T\wedge\tilde{\sigma}_{R,m}}},
$$
and 
$$
\begin{aligned}
	&\mathbb{E}\left\|\int_{0}^{t} U(t-s)P_{\geq m}\left((v_m+U(s)w_0)\Psi dW_s^{(2)}\right)\right\|^{2l}_{\tilde{Y}_{b,1}^{T\wedge\tilde{\sigma}_{R,m}}}\\
	\leq&C(T)\Big(\|k_2\|^{2l}_{H_x^1}\mathbb{E}\|P_{\geq m/2}(v_m+U(s)w_0)\|_{Y_{b,1}^{T\wedge\tilde{\sigma}_{R,m}}}^{2l}\\
	&+\|P_{\geq m/2}k_2\|^{2l}_{H_x^1}\mathbb{E}\|v_m+U(s)w_0\|^{2l}_{Y_{b,1}^{T\wedge\tilde{\sigma}_{R,m}}}\Big),
\end{aligned}
$$
which are gotten from the proof of Proposition 2.5 and Lemma 2.6 in \cite{kdvmuti}, we have the right side of \eqref{convergence_m_aux} and \eqref{convergence_m_aux_2} converges to $0$ as $m\uparrow\infty$.

Furthermore, by dividing $ [0,T_0\wedge\tilde{\sigma}_{R,m} ]$ into $[0,T\wedge\tilde{\sigma}_{R,m}],\ [T\wedge\tilde{\sigma}_{R,m} ,2T\wedge\tilde{\sigma}_{R,m}],... $ finite numbers of small intervals and using a similar argument like Lemma \ref{lem:convergence_n}, we can prove 
\begin{equation*}
	\lim_{m\uparrow \infty}\mathbb{E}\left(\|\tilde{u}_{m}\|^{2l}_{X_{b,1}^{T_0\wedge\sigma_R}}+\|\tilde{v}_{m}\|^{2l}_{\tilde{Y}_{b,1}^{T_0\wedge\sigma_R}}\right)=0
\end{equation*}
and 
\begin{equation*}
	\lim_{m\uparrow \infty}\mathbb{E}\left(\|\tilde{u}_{m}\|^{2l}_{C([0,T_0\wedge\sigma_R];H_x^1)}+\|\tilde{v}_{m}\|^{2l}_{C([0,T_0\wedge\sigma_R];H_x^1\cap\dot{H}_x^{-3/8})}\right)=0.
\end{equation*}

Hence,  by  \eqref{priori_esti_bourgain_u_m_v_m}, \eqref{priori_esti_H_x^1_u_m_v_m} and the monotone convergence theorem, we can finish the proof.

\end{proof}

\section{Appendix}
In this appendix, we propose a counter-example to interpret that $$\|\cdot\|_{L_x^rL_T^q}\leq C(r,q,T)\|\cdot\|_{L_T^\infty H_x^1}$$
will only be true under the condition  $r\geq q$. The proof is constructive.

\begin{lemma}\label{lem:counter-example}
	For any $q,r\in(0,\infty]$, $s\in\mathbb{R}$,
	$$
	\|u\|_{L_x^rL^q_{t\in[0,1]}}\leq C_{q,r,s} \|u\|_{L^\infty_{t\in[0,1]}H_x^s}
	$$
	can only be true under the condition $q\leq r$.
\end{lemma}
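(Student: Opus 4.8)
The plan is to give a constructive counterexample: for each $N\in\mathbb{N^+}$ I would exhibit a function $u_N$ on $\mathbb{R}\times[0,1]$ whose right-hand side $\|u_N\|_{L^\infty_{t\in[0,1]}H^s_x}$ is a fixed finite constant while the left-hand side $\|u_N\|_{L^r_xL^q_{t\in[0,1]}}$ blows up as $N\to\infty$, whenever $q>r$. The mechanism I want to exploit is a ``moving bump'': the spatial profile at every instant is one fixed Schwartz bump, only translated, so its $H^s_x$–norm never changes; but by parking the bump at pairwise disjoint spatial positions on disjoint time subintervals, each spatial slice $u_N(x,\cdot)$ is supported on a time set of length $\sim 1/N$, which shrinks the inner $L^q_t$–norm, and this shrinking beats the growth produced by the $N$ disjoint spatial pieces exactly in the range $q>r$.

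Concretely, I would fix a nonzero $\phi\in\mathscr{S}(\mathbb{R})$ with $\text{supp}\,\phi\subset[-1/2,1/2]$, partition $[0,1]$ into the intervals $I_j=[(j-1)/N,\,j/N)$, $j=1,\dots,N$, and define $u_N(x,t)=\phi(x-j)$ for $t\in I_j$. Since translation is an isometry of $H^s(\mathbb{R})$ for every $s\in\mathbb{R}$, one has $\|u_N(\cdot,t)\|_{H^s_x}=\|\phi\|_{H^s_x}$ for a.e.\ $t$, hence $\|u_N\|_{L^\infty_{t}H^s_x}=\|\phi\|_{H^s_x}<\infty$, independently of $N$. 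For the mixed norm, the sets $\{x:\ x-j\in\text{supp}\,\phi\}$ are pairwise disjoint, so for each $x$ at most one index $j=j(x)$ contributes, and $\|u_N(x,\cdot)\|_{L^q_t}=|\phi(x-j(x))|\,N^{-1/q}$ (with $N^{-1/q}=1$ when $q=\infty$, the sup being attained). Integrating in $x$ and summing over the $N$ bumps gives, for $r<\infty$,
$$
\|u_N\|_{L^r_xL^q_t}=N^{-1/q}\Big(\sum_{j=1}^{N}\int_{\mathbb{R}}|\phi(x-j)|^{r}\,dx\Big)^{1/r}=N^{1/r-1/q}\,\|\phi\|_{L^r_x},
$$
with $\|\phi\|_{L^r_x}\in(0,\infty)$.

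Then I would conclude: if the claimed inequality held, it would force $N^{1/r-1/q}\|\phi\|_{L^r_x}\le C_{q,r,s}\|\phi\|_{H^s_x}$ for all $N$; but when $q>r$ (which in particular forces $r<\infty$) the exponent $1/r-1/q$ is strictly positive, so the left side tends to $\infty$ as $N\to\infty$, a contradiction. Hence the inequality can hold only if $q\le r$. I do not expect a genuine obstacle here; the only points needing a little care are the bookkeeping of the boundary exponents ($q=\infty$ is handled as above, and when $r=\infty$ the hypothesis $q>r$ is vacuous so there is nothing to prove), and, should one want a genuinely continuous-in-$(x,t)$ example rather than a piecewise-constant-in-time one, replacing $u_N$ by $\phi(x-\gamma_N(t))$ for a continuous path $\gamma_N$ that dwells for time $\sim 1/N$ near each of $1,\dots,N$ — the additional contributions coming from the short transition times are of no larger order and do not affect the conclusion.
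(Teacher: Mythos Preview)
Your proof is correct and follows essentially the same moving-bump construction as the paper: a fixed compactly supported profile translated to disjoint spatial positions on disjoint time subintervals, yielding a constant $L^\infty_t H^s_x$ norm and an $L^r_xL^q_t$ norm of order $N^{1/r-1/q}$. Your version is in fact slightly more careful (disjoint spatial supports giving an exact computation, explicit handling of the endpoint exponents), but the idea is the same.
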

\begin{proof}
	Let $\varphi\in C_0^\infty(\mathbb{R})$, $\text{supp} \varphi \subset (-2,2)$ and $\varphi(x)=1,\ \forall x\in[0,1] $. We construct a series of $\{u_n\}$ as follow:
	$$
	u_n(t,x)=\varphi(x-j),\ t\in(j/n,j+1/n), j\in\{0,1,2,...,n-1\}.
	$$
	Thus, we have 
	$$
	C\sim \|u_n\|_{L^\infty_tH_x^1}\geq\|u_n\|_{L_x^rL_t^q}\geq Cn^{1/r-1/q},
	$$
	which implies $q\leq r$.
\end{proof}

%

\phantomsection
\bibliographystyle{amsplain}
\addcontentsline{toc}{section}{References}
\bibliography{reference}

\end{document}